\numberwithin{equation}{section}
\newenvironment{sis}{\left\{\begin{aligned}}{\end{aligned}\right.}
\theoremstyle{plain}
\newtheorem{thm}{Theorem}[section]
\newtheorem{lemma}[thm]{Lemma}
\newtheorem{prop}[thm]{Proposition}
\newtheorem{cor}[thm]{Corollary}
\newtheorem{fact}[thm]{Fact}
\newtheorem{lemdef}[thm]{Lemma-Definition}
\theoremstyle{definition}
\newtheorem{defi}[thm]{Definition}
\newtheorem{nota}[thm]{}
\newtheorem{question}[thm]{Question}
\theoremstyle{remark}
\newtheorem{remark}[thm]{Remark}
\newcommand{\la}{\longrightarrow}
\newcommand{\w}{\widetilde}
\newcommand{\wh}{\widehat}
\newcommand{\ov}{\overline}
\newcommand{\un}{\underline}
\newcommand{\val}{\operatorname{val}}
\renewcommand{\a}{\mathfrak{a}}
\def\X{\mathcal X}
\def\N{\mathcal N}
\def\O{\mathcal O}
\def\E{\mathcal E}
\def\P{\mathbb P}
\def\ner{\operatorname{N}(\Pic^0 \X_K)}
\def\nerd{{\operatorname{N}}(\Pic^d \X_K)}
\def\dcg{\Delta _X}
\def\LX{\Lambda _X}
\def\int{M_X}
\def\vectd{d_1,\ldots, d_{\gamma}}
\def\mc{\underline{c}_i}
\def\md{\underline{d}}
\newcommand{\Z}{\mathbb{Z}}
\newcommand{\Q}{\mathbb{Q}}
\newcommand{\Xsm}{X_{\rm sm}}
\newcommand{\Spec}{\operatorname{Spec}}
\newcommand{\im}{\operatorname{Im}}
\newcommand{\End}{\operatorname{End}}
\newcommand{\rk}{\operatorname{rk}}
\renewcommand{\Im}{\operatorname{Im}}
\newcommand{\mdeg}{\operatorname{{\underline{deg}}}}
\def\JPX{J_X^P(\un{q})}
\def\JPS{J_{X,S}^P(\un{q})}
\def\I{\mathcal I}
\newcommand{\Gr}{\operatorname{Gr}}
\renewcommand{\SS}{\mathfrak{S}}
\def\Picfun{\mathcal{P}ic}
\def\jacf{\Pic^{\underline 0}_f}
\newcommand{\Pic}{\operatorname{Pic}}
\newcommand{\pfun}[1]{{\Picfun}_f^{#1}}
\newcommand{\picf}[1]{\Pic_f^{#1}}
\newcommand{\picX}[1]{\Pic^{#1}X}
\newcommand{\Gm}{\mathbb{G}_m}
\newcommand{\hXS}{{\widehat{X}}_S}
\newcommand{\XSh}{\widehat{X_S}}
\newcommand{\hX}{{\widehat{X}}}
\newcommand{\Xsing}{X_{\rm{sing}}}
\newcommand{\Xsep}{X_{\rm{sep}}}
\newcommand{\NF}{\rm NF}
\newcommand{\Jsm}{J_f^{\sigma}(\un q)_{\rm sm}}
\newcommand{\JXsm}{J_X^P(\un q)_{\rm sm}}
\newcommand{\UXsm}{U_X(\un q)_{\rm sm}}
\newcommand{\Jprim}{J_{\hXS}^P(\qSh)_{\rm prim}}
\newcommand{\Bss}{B_{\Gamma\setminus S}(\un{q})}
\newcommand{\Bqs}{B_{\Gamma\setminus S}^{v_0}(\un{q})}
\newcommand{\qS}{\un{q^S}}
\newcommand{\qSh}{\widehat{\un{q^S}}}
\newcommand{\qh}{\widehat{q}}
\begin{document}

%\Volume{}
%\Year{}
%\DOIsuffix{mana.200310}
%\pagespan{1}{}
%\Receiveddate{}
%\Reviseddate{}
%\Accepteddate{}
%\Dateposted{}

\title[Fine compactified Jacobians]{Fine compactified Jacobians}

\author[Melo]{Margarida Melo}
\address{Departamento de Matem\'atica, Universidade de Coimbra,
Largo D. Dinis, Apartado 3008, 3001 Coimbra (Portugal)}
\email{mmelo@mat.uc.pt}

\author[Viviani]{Filippo Viviani}
\address{
Dipartimento di Matematica,
Universit\`a Roma Tre,
Largo S. Leonardo Murialdo 1,
00146 Roma (Italy).
}
\email{viviani@mat.uniroma3.it}

\thanks{The first author was partially supported by the FCT project \textit{Espa\c cos de Moduli em Geometria Alg\'ebrica} (PTDC/MAT/111332/2009)
and by the FCT project \textit{Geometria Alg\'ebrica em Portugal} (PTDC/MAT/099275/2008).
The second author was supported by the grant FCT-Ci\^encia2008 from CMUC (University of Coimbra)
and by the FCT project \textit{Espa\c cos de Moduli em Geometria Alg\'ebrica} (PTDC/MAT/111332/2009).}

\keywords{Fine and coarse compactified Jacobians, nodal curves, N\'eron models.}

\subjclass[msc2000]{{14H10}, {14H40}, {14D22}.}

\begin{abstract}
We study Esteves's fine compactified Jacobians for  nodal curves.
We give a proof of the fact that, for a one-parameter regular local smoothing of a nodal curve $X$, the relative smooth locus of a relative fine compactified Jacobian
is isomorphic to the N\'eron model of the Jacobian of the general fiber, and thus it provides a modular compactification of it.
We show that each fine compactified Jacobian of $X$ admits a stratification in terms of certain
fine compactified Jacobians of partial normalizations of $X$ and, moreover, that it can be realized as a quotient of the smooth locus of a suitable
fine compactified Jacobian of the total blowup of $X$.
Finally, we determine when a fine compactified Jacobian is isomorphic to the corresponding Oda-Seshadri's coarse compactified Jacobian.
\end{abstract}

\maketitle

\tableofcontents

\section*{Introduction}

\begin{nota}{\it Motivation}
%\label{}

The Jacobian variety of a smooth curve is an abelian variety that carries important information about the curve itself. Its properties have been widely studied along the decades,
%leading to a big range of applications.
 giving rise to a significant amount of beautiful mathematics.
%Smooth curves and Abelian varieties are certainly among the better known objects in Algebraic Geometry. Moreover, via the Jacobian variety, one can build the famous Torelli map, which gives an inclusion of the moduli space of smooth curves of given genus $g$ inside the moduli space of principally polarized abelian varieties of dimension $g$. This map, together with all its intervenient varieties, has been widely studied in the past decades

However, for singular (reduced) curves, the situation is more involved since the generalized Jacobian variety is not anymore an abelian variety, once it is, in general, not compact.
% the situation is much more intricate and
%To start with, the generalized Jacobian of a singular curve is, in general, not compact.
The problem of compactifying it is, of course, very natural, and it is considered to go back to the work of Igusa in \cite{igusa}  and Mayer-Mumford in \cite{MM}
in the 50's--60's.
Since then, several solutions appeared, differing from one another in various aspects as the generality of the construction, the modular description of the boundary and the functorial properties.
%Despite this, this area of research is presently very active: for instance  a complete understanding of the relations among the existing compactifications is still missing (see \cite{alexeev} for some
%progress  in this direction).

%In particular,
For families of irreducible curves, after the important work of D'Souza in \cite{dsouza}, a very satisfactory solution has been found by Altman and Kleiman in \cite{altman}:
their relative compactification is a fine moduli space, i.e. it admits a universal, or Poincar\'e, sheaf after an \'etale base change.

For reducible curves, the problem of compactifying the generalized Jacobian variety is much more intricate from a combinatorial and also functorial point of view.
The case of a single curve over an algebraically closed field was dealt with by Oda-Seshdari in \cite{OS} in the nodal case and
by Seshadri in \cite{ses} in the general case. For families of reducible curves, a relative compactification is provided by the work of Simpson in \cite{simpson}, which in great
generality deals with coherent sheaves on families of projective va\-rie\-ties.
A different approach is that of considering the universal Picard scheme over the moduli space of smooth curves and compactify it over the moduli space of stable curves. This point of view was the one
considered by Caporaso in \cite{caporaso} and by Pandharipande in \cite{pandha} (the later holds more generally for bundles of any rank) and by Jarvis in \cite{Jar}.
%In \cite{simpson} the problem was then solved in great generality
A common feature of these compactifications is that they are constructed using geometric invariant theory (GIT), hence they only give coarse moduli spaces for their corresponding
moduli functors. We refer to \cite{alexeev} and \cite{CMKV2} for an account on the way the different coarse compactified Jacobians for nodal curves relate to one another.

%Typically, in the case of families with reducible fibers, these constructions yield a scheme that is a coarse moduli space for the category of sheaves considered: most of the times $S$-equivalence
%classes of torsion-free sheaves on the family (see \ref{notsheaves} below).

The problem of constructing fine compactified jacobians for reducible curves remained open until the work of Esteves in \cite{est1}.
% gave the better behaved solution regarding the functorial properties, in the sense that it yields a fine moduli space even if the fibers are reducible.
Given a family $f:\X\to S$ of reduced curves endowed with a vector bundle $\E$ of integral slope, called polarization, and with a section $\sigma$, Esteves constructs
an algebraic space $J_{\E}^\sigma$ over $S$, which is a fine moduli space for
simple torsion-free sheaves on the family satisfying a certain stability condition with respect to $\E$ and $\sigma$ (see \ref{finecoarse}).
The algebraic space $J_{\E}^{\sigma}$ is always proper over $S$ and, in the case of a single curve $X$ defined over an algebraically closed field, it is indeed
a projective scheme (see \cite[Thm. 2.4]{est2}).

However, not much is known on  the geometry of Esteves's fine compactified Jacobians, for example how do they vary with the polarization and the choice of a section or how 
do they relate to the coarse compactified Jacobians. This last problem started to be investigated by Esteves in \cite{est2}, where  
a sufficient condition ensuring that a fine compactified Jacobian is isomorphic to the corresponding coarse compactified Jacobian (in the sense of \ref{finecoarse})
is found for curves with locally  planar singularities.

\end{nota}

\begin{nota}{\emph{Results}}

The aim of the present work is to study the geometry of Esteves's fine compactified Jacobians for a  nodal curve $X$ over an algebraically closed field $k$.
We introduce the notation $J_X^P(\un q)$ for the fine compactified Jacobians of $X$, where $P$ is a smooth point of $X$ and $\un q=\{\un q_{C_i}\}$ is a collection of rational numbers,
one for each irreducible component $C_i$ of $X$, summing up to an integer number $\displaystyle |\un q|:=\sum_{C_i} \un q_{C_i}\in \Z$ (which corresponds to the choice of a polarization, 
see \ref{finecoarse}).

Our first result is Theorem \ref{picner}, where we show that fine compactified Jacobians $J_X^P(\un q)$ provide a geometrically meaningful compactification of N\'eron models of nodal curves
or, according to the terminology  of \cite[Def. 2.3.5]{capsurvey} and  \cite[Def. 1.4 and Prop. 1.6]{capNtype}, that they  are of N\'eron-type. Explicitly, this means the following:
given a one-parameter  regular local smoothing $f:\X\to S=\Spec(R)$ of $X$ with a section $\sigma$ such that $P=\sigma(\Spec(k))$ (see \ref{notner}),
where $R$ is a Henselian DVR  with algebraically closed residue field $k$ and quotient field $K$, consider the
relative fine compactified Jacobian $J_f^{\sigma}(\un q)$, having special fiber isomorphic to $J_X^P(\un q)$ and general fiber isomorphic to $\Pic^{|\un q|}(\X_K)$.
Then the $S$-smooth locus of $J_f^{\sigma}(\un q)$, which consists of the sheaves on $\X$ whose restriction to $X=\X_k$ is locally free (see Fact \ref{smooth}),
is naturally isomorphic to the N\'eron model  $N(\Pic^{|\un q|}\X_K)$ of the degree $|\un q|$ Jacobian of the general fiber $\X_K$ of $f$.
%A consequence of this property is that fine compactified Jacobians provide a geometrically meaningful compactification of N\'eron models of nodal curves.
% and is referred to by Caporaso as ``being of N\'eron type'' in \cite{capNtype}.
In particular, one gets that, independently of the choice of the polarization $\un q$ and of the smooth point $P\in X_{\rm sm}$, the number of irreducible components of the fine
compactified Jacobians $J_X^P(\un q)$  is always equal to the complexity $c(\Gamma_X)$ of the dual graph $\Gamma_X$ of the curve $X$, or equivalently to the cardinality
of the degree class group $\Delta_X$ (see \ref{nota-group}). A different proof of this result already appears in the (unpublished) PhD thesis  of 
Busonero (\cite{Bus}).

Next, we show in Theorem \ref{strata-Jac} that the fine compactified Jacobians $J_X^P(\un q)$ of $X$ admit a canonical stratification
\[J_X^P(\un q)=\coprod_{\emptyset\subseteq S\subseteq X_{\rm sing}} J_{X,S}^P(\un q)\]
where $J_{X,S}^P(\un q)$ is the locally closed subset consisting of sheaves $\I\in J_X^P(\un q)$ that are not free exactly at $S\subseteq X_{\rm sing}$
and $J_{X,S}^P(\un q)$ is not empty if and only if the partial normalization $X_S$ of $X$ at $S$ is connected. We show that the closure of
$J_{X,S}^P(\un q)$ in $J^P_X(\un q)$ is equal to the union of the strata $J_{X,S'}^P(\un q)$ such that $S\subseteq S'$ and that it is
canonically isomorphic to a fine compactified Jacobian $J_{X_S}^P(\un{q^S})$ for a suitable polarization $\un{q^S}$ of $X_S$ (see \ref{pola-nota}). In particular,
each stratum $J_{X,S}^P(\un q)$ is a disjoint union of $c(\Gamma_{X_S})$ copies of the generalized Jacobian $J(X_S)$ of $X_S$. Combined with the previous result, 
this implies that fine compactified Jacobians of a nodal curve $X$ yield compactifications of N\'eron models of $X$ such that the boundary is made of N\'eron models of certain 
partial normalizations of $X$. 

In Theorem \ref{quot-blow}, we describe  $J_X^P(\un q)$ as a quotient of the smooth locus of a fine compactified Jacobian $J^P_{\hX}(\un{\qh})$ for a suitable
polarization $\un{\qh}$ on the total blowup $\hX$ of $X$ (see \ref{pola-nota}). In Theorem \ref{quot}, we show that a similar relation holds for the relative fine
compactified Jacobians of suitable one-parameter regular local smoothings of $X$ and $\hX$.
In particular, the fine compactified Jacobian $J_X^P(\un q)$ is a quotient
of the special fiber of the N\'eron model of $\hX$ in degree $|\un q|$.

Note that the above results were proved by Caporaso in \cite{capneron} and \cite{capNtype} for the coarse canonical degree-$d$
compactified Jacobians $\ov{P_X^d}$ (see \ref{nota-Jac}\eqref{nota-Jac3}) for a special class of stable curves $X$, called  $d$-general (see Remark \ref{can-gen}).
Our results can be seen as a generalization of her results to arbitrary nodal curves and to any polarization.

Finally, in Theorem \ref{nondeg-thm}, we determine for which polarizations $\un q$ and points $P\in X_{\rm sm}$, the natural map (see \ref{finecoarse})
\[\Phi : J_X^P(\un q)\longrightarrow U_X(\un q)\]
from Esteves's fine compactified Jacobians to the corresponding Oda-Sesha\-dri's coarse com\-pacti\-fied Jacobian is an isomorphism. In particular, we prove that this problem
depends only on $\un q$ and not on $P$ and that the sufficient conditions on $\un q$ found by Esteves in \cite{est2} are also necessary.
%Indeed, this turns out to be independent of the chosen point $P$, and

%in terms of fine compactified jacobians
%N\'eron models
% of the connected partial normalizations of $X$. Each stratum consists of torsion-free sheaves that are not free on a certain set of nodes of $X$ or, equivalently, to certain locally free sheaves
%sheaves on that partial normalization of $X$ which, according to the first part, are isomorphic to the N\'eron model of that partial normalization of $X$.
 % and its codimension is equal to the number of nodes normalized.

%We will refer to $J_{X}^{P}(\un q)$ as a ``fine compactified jacobian'' of $X$.

%constructs stacks compactifying N\'eron models of certain stable curves, called $d$-general, for each degree $d$, and then describes these compactifications in terms of N\'eron models.

\end{nota}

\begin{nota}{\it Outline of the paper}

The paper is organized as follows.
In Section 1, we collect all the notations and basic properties about nodal curves and their combinatorial invariants (dual graph, degree class group, polarizations)
that we are going to use in the sequel. Moreover, we review the theory of N\'eron models
for Jacobians and the main properties of Esteves's fine compactified Jacobians as well as  Oda-Seshadri's, Seshadri's, Caporaso's and Simpson's coarse compactified Jacobians  for nodal curves. We also compare these constructions among each others and we establish formulae linking the different notations.
%Note that since we prefer to work with the degree of the sheaves rather than with the Euler characteristic, we

%start by giving an overview of nodal curves and combinatorial aspects of these, generalized Jacobians, N\'eron models of Jacobians, polarizations, torsion-free sheaves as well as of Esteves and Oda-
%Seshadri's compactifications of the generalized Jacobian variety.
Section 2 is entirely devoted to the proof of a technical result in graph theory, that is a key ingredient for the results in the subsequent sections.

%Our main results are then included in Sections 3 to 6 as follows. A reader familiar with this kind of subjects is actually encouraged to browse directly through Sections 3 to 6 and
%to come back to Sections 1 and 2 to notations and definitions as indicated in the paper.

In Section 3 we prove that fine compactified Jacobians are of N\'eron type.

%Let $X$ be a nodal curve and $f:\X\to S$ a regular smoothing of $X$.
%In section 3 we give a different proof to Busonero's result in \cite{Bus} of the fact that the smooth locus of $J^\sigma_{f,\E}$ over $S$, which corresponds to those torsion-free sheaves  on $J^\sigma_{f,%
%\E}$ which are actually locally free, is isomorphic to the N\'eron model of the general fiber of $f$. This property could be understood as the fact that fine compactified jacobians
%Esteves' fine compactified Jacobians
%provide a geometrically meaningful compactification of N\'eron models of nodal curves and is referred to by Caporaso as ``being of N\'eron type'' in \cite{capNtype}. In particular, one gets that, independently of the choice of the polarization, the number of irreducible components of the fine compactified jacobians is always equal to the complexity of the dual graph of the curve (see \ref{nota-group} for the definition of complexity).

In Section 4 we describe a stratification of $J_X^P(\un q)$ in terms of fine compactifed Jacobians of partial normalizations of $X$.

%show that, given a nodal curve $X$, fine compactified jacobians of $X$ admit a canonical stratification in terms of fine compactified jacobians
%N\'eron models
% of the connected partial normalizations of $X$. Each stratum consists of torsion-free sheaves that are not free on a certain set of nodes of $X$ or, equivalently, to certain locally free sheaves
%sheaves on that partial normalization of $X$ which, according to the first part, are isomorphic to the N\'eron model of that partial normalization of $X$.
 % and its codimension is equal to the number of nodes normalized.

Section 5
%we show that $J_X^P(\un q)$ is a quotient of the smooth locus of a suitable fine compactified Jacobian of the total blowup $\hX$ of $X$ and similarly for the relative fine compactified
%Jacobian $J_f^{\sigma}(\un q)$ of a one-paramater regular local smoothing $f:\X\to \Spec R$ of $X$.
is devoted to show how to realize  fine compactified Jacobians of $X$ as quotients of the N\'eron model of the total blowup $\hX$ of $X$.

In Section 6 we characterize those polarizations for which Esteves's fine compactified Jacobians are isomorphic to Oda-Seshadri's  coarse compactified Jacobians.

\begin{nota}{\it Further questions and future work}

In the present paper we deal with nodal curves mainly because of the combinatorial tools that we use to prove our results, e.g. the dual graph associated to
a nodal curve. It is likely, however, that some of our results could be extended to more general singular curves, e.g. curves with locally planar singularities
(see \cite{AKI} for the relevance of locally planar singularities in the context of compactified Jacobians of singular curves).
%We plan to come back to this in a future work.

The results of this paper show that the fine compactified Jacobians $J_X^P(\un q)$ of a nodal curve $X$  share very similar properties regardless of the polarization $\un q$
and the choice of the smooth point $P\in X_{\rm sm}$. The following question arises naturally
\begin{question}
For a given nodal curve $X$, how do the fine compactified Jacobians $J_X^P(\un q)$ change as the polarization $\un q$ and the smooth point $P\in X_{\rm sm}$ vary?
\end{question}
%\noindent This question is currently being investigated together with Eduardo Esteves and  Marco Pacini.

Note also that, by our comparison's result between fine compactified Jacobians and coarse compactified Jacobians (see Theorem \ref{nondeg-thm}), the above problem
is also closely related to the problem of studying the variation of GIT in the Oda-Seshadri's construction  of coarse compactified Jacobians of $X$.  In turn, this problem
seems to be related to wall-crossing phenomena for double Hurwitz numbers (see \cite{GJV} and \cite{CJM}). We plan to explore this fascinating connection in the future.

Recently, compactified Jacobians of integral curves have played an important role in the
celebrated proof of the Fundamental Lemma, since they appear naturally as fibers of the Hitchin's fibration in the case where the spectral curve is integral
(see \cite{Lau}, \cite{LN}, \cite{Ngo}). 
We plan to extend this description to nodal (reducible) spectral curves using fine compactified Jacobians. We expect that the results on the geometry of fine compactified Jacobians 
described here  can give important insights on the singularities of the fibers of the Hitchin map in the case where the spectral curve is reducible.

\vspace{0.5cm}

After this preprint was posted on arXiv, Jesse Kass posted the preprint \cite{Kas2} (based on his PhD thesis \cite{Kas}), where he extends our Theorem \ref{picner} to a larger class
of singular curves. Moreover,  he pointed out to us that our stratification of the fine compactified Jacobians of nodal curves (see Section \ref{Strat-sec}) is similar 
to the stratification by local type that the author describes in \cite[Sec. 5.3]{Kas}.

%informed us that, in his PhD thesis \cite{Kas}, he has studied Esteves's fine compactified Jacobians for singular 
%(not necessarily nodal) curves and their relation with N\'eron models of Jacobians. In particular, he gave a different proof of Theorem \ref{picner} (see \cite[Lemma 3.26]{Kas}). 
%Moreover,  our stratification of the fine compactified Jacobians of nodal curves (see Section \ref{Strat-sec}) is similar 
%to the stratification by local type that the author describes in \cite[Sec. 5.3]{Kas}.

\end{nota}

\end{nota}

\paragraph { \bf Acknowledgements}

The present collaboration started  during our stay at IMPA, Rio de Janeiro, in July-August 2009. We would like to thank IMPA for the hospitality
and in particular Eduardo Esteves for the kind invitation and for sharing with us many enlightening ideas and suggestions. We would also like to thank Marco Pacini for many useful conversations,
Eduardo Esteves for some comments on an early draft of this manuscript and Jesse Kass for sending us a copy of his PhD thesis \cite{Kas}.

\section{Preliminaries and notations}
\label{not}

Throughout this paper, $R$ will be a Henselian (e.g. complete) discrete valuation ring (a DVR) with algebraically closed residue field
$k$ and quotient field  $K$.  We set $B=\Spec(R)$.

\begin{nota}{\emph{Nodal curves}}
\label{notnodal}

By a genus $g$ nodal curve $X$ we mean a projective and reduced curve of arithmetic genus $g:=1-\chi(\O_X)$ over $k$ having only
nodes as singularities. We will denote by     $\omega_X$ the canonical or dualizing sheaf of $X$.
We denote by     $\gamma_X$ (or simply $\gamma$) the number of irreducible components of $X$
and by $C_1,\ldots ,C_{\gamma}$ its irreducible components.

A \emph{subcurve} $Y\subset X$ is a closed subscheme of $X$ that is a curve, or in other words
$Y$ is the union of some irreducible components of $X$. We say that $Y$ is a proper subcurve, and we write $Y\subsetneq X$, if $Y$ is a subcurve of $X$
and $Y\neq X$. For any proper subcurve
$Y\subsetneq X$, we set $Y^c:=\ov{X\setminus Y}$ and we call it the complementary subcurve
of $Y$.
For a subcurve $Y\subset X$, we denote by     $g_Y$ its arithmetic genus and
by $\delta_Y:=|Y\cap Y^c|$ the number of nodes where $Y$  intersects the complementary curve $Y^c$. Then, the adjunction formula gives
\[
w_Y:=\deg (\omega_X)_{|Y}=2g_Y-2+\delta_Y.
\]
We denote by   $X_{\rm sm}$ the smooth locus of $X$ and by $\Xsing$ the set of nodes of $X$. We set $\delta=\delta_X:=|\Xsing|$.
The set of nodes $\Xsing$ admits a partition
\[\Xsing=X_{\rm ext}\coprod X_{\rm int},\]
 where $X_{\rm ext}$ is the subset of $\Xsing$ consisting of the nodes at which
two different irreducible components of $X$ meet (we call these external nodes),
and $X_{\rm int}$ is the subset of $\Xsing$ consisting of the nodes which are
self-intersection of  an irreducible component of $X$ (we call these internal
nodes).

We denote by     $\Gamma_X$ the \emph{dual graph} of $X$. With a slight abuse of notation,
we identify the edges $E(\Gamma_X)$ of $\Gamma_X$ with the nodes $\Xsing$ of $X$ and the vertices $V(\Gamma_X)$ of
$\Gamma_X$ with the irreducible components of $X$.
Note that the subcurves of $X$ correspond to the subsets of $V(\Gamma_X)$ via the following
bijection: we associate to a set of vertices $W\subseteq V(\Gamma_X)$ the subcurve $X[W]$ of $X$
given by the union of the irreducible components corresponding to the
vertices which belong to $W$.
%Indeed, we have that $\Gamma_{X[W]}=\Gamma_X[W]$.
Given a smooth point $P\in \Xsm$, we denote by $v_P$ the vertex corresponding
to the unique irreducible component of $X$ on which $P$ lies.

A node $N\in X_{\rm ext}$ is called a \emph{separating node} if $X-N$ is not connected.
Since $X$ is itself connected, $X-N$ would have two connected components.
Their closures are called the \emph{tails} attached to $N$. We denote by     $\Xsep\subset X_{\rm ext}$ the set of separating nodes of $X$.
Following \cite[Sec. 3.1]{est2}, we say that a subcurve $Y$ of $X$ is a \emph{spine} if $Y\cap Y^c\subset \Xsep$.
Note that the union of spines is again a spine and the connected components
of a spine are spines. A tail (attached to some separating node $N\in \Xsep$) is a spine
$Y$ such that $Y$ and $Y^c$ are connected and conversely.

Given a subset $S\subset \Xsing$, we denote by     $X_S$ the \emph{partial normalization} of $X$ at $S$ and by
$\widehat{X_S}$ the \emph{partial blowup} of $X$ at $S$, where (with a slight abuse of terminology) by blowup of $X$ at $S$ we mean the nodal curve $\widehat{X_S}$
obtained from $X_S$ by inserting a $\mathbb P^1$ attached at every pair of points of $X_S$ that are preimages of a node $n\in S$. We call such a $\P^1\subset \widehat{X_S}$ the exceptional 
component lying  above $n\in S$ and we denote by $E_S\subset \widehat{X_S}$ the union of all the exceptional components.
Note that we have a commutative diagram:
\begin{equation}\label{diag-S}
\xymatrix{X_S\ar@{^{(}->}[rr]^{i_S} \ar@{->>}[dr]_{\nu_S} & & \widehat{X_S}\ar@{->>}[dl]^{\pi_S} \\
& X &
}
\end{equation}
Here $\nu_S$ is the partial normalization map, $\pi_S$ contracts to $p\in S$ the exceptional component lying above $p$
and the inclusion $i_S$ realizes $X_S$ as the complementary subcurve of $E_S\subset \widehat{X_S}$.
% to the exceptional divisor $E_S$ of the morphism $\pi_S$, i.e. the union of all the exceptional components lying above the nodes
%of $S$. 
We denote the total blowup of $X$ by  $\widehat{X}$
and the natural map to $X$ by  $\pi:\widehat{X}\to X$.

For a given subcurve $Y$  of $X$ denote by   $Y_S\subset X_S$ the preimage of $Y$ under $\nu_S$. Note that
$Y_S$ is the partial normalization of $Y$ at $S\cap Y$ and that every subcurve $Z\subset X_S$ is of the form $Y_S$ for some uniquely determined subcurve $Y\subset X$, namely $Y=\nu_S(Z)$.

The dual graph $\Gamma_{X_S}$ of $X_S$ is equal to the graph $\Gamma_X\setminus S$ obtained from $\Gamma_X$
by deleting all the edges belonging to $S$.
The dual graph $\Gamma_{\widehat{X_S}}$ of $\widehat{X_S}$ is equal to the graph $\widehat{(\Gamma_X)_S}$ obtained from
$\Gamma_X$ by adding a new vertex in the middle of every edge  belonging to $S$.

\end{nota}

\begin{nota}{\emph{Degree class group}}
\label{nota-group}

We call the elements $\md = (\vectd)$ of $\Z^{\gamma}$ \emph{multidegrees}.
We set  $|\md |:=\sum_{1}^{\gamma} d_i$ and call it the total degree of $\un d$.
% and $\un d_Z=\sum _{C_i\subset Z}d_i$ for any subcurve $Z\subset X$.
For a line bundle $L\in \Pic X$ its multidegree  is
$ \mdeg L :=(\deg_{C_1}L,\ldots,\deg_{C_{\gamma}}L)$ and its (total) degree is
$\deg L:=\deg _{C_1}L+\ldots+\deg _{C_{\gamma}}L$.

% were $C_1,\ldots ,C_{\gamma}$ denote the irreducible components of $X$.

Given $\md\in \Z^\gamma$ we set
$
\picX{\md}:=\{L\in \Pic X: \mdeg L = \md\}
$.
Note that $ \picX{\underline 0}:=\{L\in \Pic X: \mdeg L = (0,\ldots, 0)\}$ is a group (called the
{\it generalized Jacobian} of $X$ and denoted by $J(X)$) with respect to the tensor product of line bundles and each
$\Pic^{\md}(X)$ is a torsor under $\Pic^{\un 0}(X)$.
We set
$
\picX{d}:=\{L\in \Pic X:\deg L = d\}= \coprod_{|\md|=d}\picX{\md}.
$

For every component $C_i$ of $X$ denote
\[
\begin{matrix}\delta_{i,j}:= & \left\{\begin{array}{l}\;\;\, |C_i\cap C_j| \, \mbox{ if }i\not= j,\\  \\
-\delta_{C_i} \, \mbox{ if } i=j.\\
\end{array}\right.\end{matrix}
\]
For every $i=1,\ldots ,\gamma$ set
$
\mc :=(\delta_{1,i},\ldots ,\delta_{\gamma ,i})\in \Z ^{\gamma}.
$
Then  $|\mc|=0$ for all $i=1,\ldots, \gamma$
and the matrix $\int$ whose columns are the $\mc$ can be viewed as an {\it intersection matrix} for $X$.
Consider the sublattice $\LX $ of $\Z^{\gamma}$ of rank $\gamma-1$ spanned by the $\mc$
\[
\LX :=< \underline{c}_1, \ldots ,\underline{c}_{\gamma} >.
\]

\begin{defi}
We say that two multidegrees $\md$ and $\md'$ are equivalent, and write $\md\equiv \md'$,
if and only if $\md-\md'\in \LX$. The equivalence classes of multidegrees that sum up to $d$ are denoted by
\[
\dcg^d:= \{ \md \in \Z ^{\gamma}: |\md | =d\}/_\equiv.
\]
Note that $\dcg:=\dcg^0$ is a finite group and that  each $\dcg^d$ is a torsor
under $\dcg$.
The group $\Delta_X$ is
known in the literature under many different names (see \cite{BMS2} and the references therein); we will follow the terminology introduced in \cite{caporaso} and call it the \emph{degree class group} of $X$.
% and there are bijections $\dcg\leftrightarrow\dcg^d$.
\end{defi}

We shall denote the elements in $\dcg^d$ by lowercase greek letters $\delta$ and write
$\md \in \delta$ meaning that the class $[\md]$ of $\md $ is $\delta$.

A well-known theorem in graph theory, namely Kirchhoff's Matrix Tree Theorem (see e.g. \cite[Thm. 1.6]{BMS} and the references therein), asserts that, if $X$ is connected, the cardinality of $\dcg$
(and hence of each $\dcg^d$) is equal to the
\emph{complexity} $c(\Gamma_X)$ of the dual graph $\Gamma_X$ of $X$, that is the number of spanning trees of $\Gamma_X$.
% (see, for instance, \cite{BMS} for a proof of this fact).
Note that $c(\Gamma_X)>0$ if and only if $X$ is  connected.

In the sequel, we will use the following result
which gives a formula for the complexity of $\widehat{\Gamma_S}$ (see the notation in \ref{notnodal}):

\begin{fact}\cite[Thm. 3.4]{BMS}\label{compl-blow}
For any $S\subset E(\Gamma)$, we have that
\[c(\widehat{\Gamma_S})=\sum_{\emptyset\subseteq S'\subseteq S} c(\Gamma\setminus S').\]
\end{fact}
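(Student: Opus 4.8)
The statement to prove is Fact \ref{compl-blow}, citing \cite[Thm. 3.4]{BMS}, which asserts
\[c(\widehat{\Gamma_S})=\sum_{\emptyset\subseteq S'\subseteq S} c(\Gamma\setminus S').\]
Here $\widehat{\Gamma_S}$ is obtained from $\Gamma$ by subdividing each edge in $S$ with a new vertex, and $\Gamma\setminus S'$ is $\Gamma$ with the edges in $S'$ deleted.

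\textbf{Plan.} The plan is to proceed by induction on $|S|$, using the deletion-contraction recurrence for the complexity (number of spanning trees) together with a single elementary observation about subdividing one edge. Recall that for any graph $\Gamma$ and any edge $e$ that is not a loop, $c(\Gamma) = c(\Gamma\setminus e) + c(\Gamma/e)$, where $\Gamma/e$ is the contraction; and if $e$ is a loop, $c(\Gamma)=c(\Gamma\setminus e)$. (Edges in $S$ correspond to nodes, and we may as well treat loops trivially since subdividing a loop produces an edge which is not a loop but whose contraction returns the original graph — this case needs a small separate check but is harmless.)

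\textbf{Key steps.} First I would isolate the base case $S=\emptyset$: then $\widehat{\Gamma_S}=\Gamma$ and the right-hand side is the single term $c(\Gamma)$, so the identity holds trivially. For the inductive step, pick an edge $e\in S$ and write $S = S_0 \sqcup\{e\}$. Let $e'$ and $e''$ be the two edges of $\widehat{\Gamma_S}$ into which $e$ is subdivided (they share the new midpoint vertex $w$ of degree $2$). The crucial elementary fact is: since $w$ has degree $2$ in $\widehat{\Gamma_S}$, a spanning tree of $\widehat{\Gamma_S}$ either contains both $e'$ and $e''$ — and contracting, say, $e''$ gives a bijection with spanning trees of the graph $\widehat{\Gamma_S}/e''$, which is exactly $\widehat{(\Gamma)_{S_0}}$ with the single (unsubdivided) edge $e$ restored, i.e. $\widehat{\Gamma_{S_0}}$ — or it omits exactly one of $e',e''$, and then (since $w$ must still be connected through the remaining one) it contains the other; deleting $e'$ (resp. $e''$) and contracting $e''$ (resp. $e'$) identifies these with spanning trees of $\widehat{\Gamma_S}/e''\setminus\{$ the image of $e'\}$, which is $\widehat{\Gamma_{S_0}}\setminus e = \widehat{(\Gamma\setminus e)_{S_0}}$, counted \emph{twice} (once for omitting $e'$, once for omitting $e''$). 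Hence
\[
c(\widehat{\Gamma_S}) = c(\widehat{\Gamma_{S_0}}) + 2\, c\bigl(\widehat{(\Gamma\setminus e)_{S_0}}\bigr).
\]
Wait — I should double-check the multiplicity against small cases; more carefully, the ``omit one of $e',e''$'' spanning trees are in bijection with spanning trees of $\widehat{\Gamma_{S_0}}\setminus e$ but with a factor $2$ from the choice of which of $e',e''$ to keep, \emph{provided} $e$ is not a bridge in the relevant sense; the deletion–contraction formula handles all cases uniformly, so it is cleaner to simply apply $c(\widehat{\Gamma_S}) = c(\widehat{\Gamma_S}\setminus e'') + c(\widehat{\Gamma_S}/e'')$, note $\widehat{\Gamma_S}/e'' = \widehat{\Gamma_{S_0}}$ (the subdivision point of $e$ is gone, $e$ is back as a single edge, the other edges of $S_0$ stay subdivided), and then apply deletion–contraction again to $e'$ inside $\widehat{\Gamma_S}\setminus e''$: contracting $e'$ there gives $\widehat{\Gamma_{S_0}}\setminus e$, while deleting $e'$ leaves $w$ isolated so contributes $0$. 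This yields the displayed recurrence cleanly.

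\textbf{Conclusion of the induction.} Now apply the inductive hypothesis to both terms, using $|S_0| = |S|-1$ and noting that $(\Gamma\setminus e)\setminus S'_0 = \Gamma\setminus(S'_0\cup\{e\})$:
\[
c(\widehat{\Gamma_S}) = \sum_{S'\subseteq S_0} c(\Gamma\setminus S') + 2\sum_{S'_0\subseteq S_0} c\bigl(\Gamma\setminus (S'_0\cup\{e\})\bigr).
\]
The subsets $S'\subseteq S$ split into those not containing $e$ (the first sum) and those containing $e$, i.e. $S' = S'_0\cup\{e\}$ with $S'_0\subseteq S_0$; so the right-hand side should be $\sum_{S'\subseteq S_0} c(\Gamma\setminus S') + \sum_{S'_0\subseteq S_0} c(\Gamma\setminus(S'_0\cup\{e\}))$, and the discrepancy is the extra factor $2$ versus $1$ in the second sum. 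This signals that my deletion–contraction bookkeeping above over-counted, and the correct recurrence must instead read $c(\widehat{\Gamma_S}) = c(\widehat{\Gamma_{S_0}}) + c(\widehat{(\Gamma\setminus e)_{S_0}})$ — which is exactly what a careful application gives once one observes that contracting $e''$ in $\widehat{\Gamma_S}\setminus e''$ sends $e'$ to a \emph{loop at $w$'s image}? No: the right way, which I would verify carefully in the writeup, is to contract $e'$ first in all of $\widehat{\Gamma_S}$ (legal, $e'$ is not a loop), obtaining $\widehat{\Gamma_{S_0}}$ exactly, so $c(\widehat{\Gamma_S}) = c(\widehat{\Gamma_{S_0}}\text{ with }e\text{ doubled?})$...

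\textbf{The main obstacle.} The genuine difficulty — and the step to get exactly right — is precisely this combinatorial bookkeeping of how spanning trees of a subdivided graph decompose via deletion–contraction at the two halves of a subdivided edge; the claimed identity forces the recurrence $c(\widehat{\Gamma_S}) = c(\widehat{\Gamma_{S_0}}) + c(\widehat{(\Gamma\setminus e)_{S_0}})$, and I expect the cleanest route is: apply deletion–contraction to $\widehat{\Gamma_S}$ at edge $e'$; contraction gives $\widehat{\Gamma_{S_0}}$ (midpoint $w$ absorbed, $e$ restored), deletion leaves $\widehat{\Gamma_S}\setminus e'$ in which $w$ now has degree $1$ via $e''$, so its spanning trees must use $e''$, and contracting that pendant edge $e''$ identifies them with spanning trees of $\widehat{(\Gamma\setminus e)_{S_0}}$. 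That gives the recurrence with coefficient $1$, and the induction then closes immediately by the subset decomposition above. Everything else (the base case, the identification $\widehat{\Gamma_S}/e'=\widehat{\Gamma_{S_0}}$, the handling of loop-edges in $S$, and the final algebraic matching of the two sums) is routine.
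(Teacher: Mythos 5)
Your final argument is correct, and it is worth noting that the paper itself gives no proof of this Fact at all: it simply cites \cite[Thm. 3.4]{BMS}, so your write-up supplies a self-contained elementary proof where the paper relies on a reference. The route you end up with is the natural one: induct on $|S|$, pick $e\in S$ with $S=S_0\sqcup\{e\}$ and subdivision edges $e',e''$ meeting at the new degree-$2$ vertex $w$, and apply deletion--contraction at $e'$. Contracting $e'$ undoes the subdivision of $e$ and yields exactly $\widehat{\Gamma_{S_0}}$; deleting $e'$ leaves $w$ pendant on $e''$, so every spanning tree contains $e''$ and contracting that pendant edge identifies these trees with spanning trees of $\widehat{(\Gamma\setminus e)_{S_0}}$. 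This gives $c(\widehat{\Gamma_S})=c(\widehat{\Gamma_{S_0}})+c\bigl(\widehat{(\Gamma\setminus e)_{S_0}}\bigr)$ with coefficient $1$, and splitting the subsets $S'\subseteq S$ according to whether $e\in S'$ closes the induction; the loop case is harmless since a subdivided loop contains no loops and the same two steps apply verbatim. Be aware, though, that the middle of your text still carries the discarded factor-$2$ computation and a half-finished sentence (``so $c(\widehat{\Gamma_S}) = c(\widehat{\Gamma_{S_0}}\text{ with }e\text{ doubled?})\dots$''); in a final version you should delete that detour entirely, since the error there was only in attributing \emph{all} spanning trees of $\widehat{\Gamma_{S_0}}$ (rather than just those containing $e$) to the ``both halves'' case, and the clean deletion--contraction bookkeeping you give at the end already settles the point.
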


\end{nota}

\begin{nota}\emph{N\'eron models of Jacobians}
\label{notner}

A \emph{one-parameter   regular local smoothing} of $X$ is a morphism $f:\X\to B$ where
$\X$ is a regular surface,
% and  $B=\Spec(R)$ for a discrete valuation ring $R$ with residue field  $k$ and fields of fractions $K$,
such that the special fiber  $\X_k$ is isomorphic to $X$ and the generic fiber $\X_K$ is a smooth curve.

%Let $f:\X \la B$ be a family of  connected nodal curves; that is, $f$ is a proper flat morphism
%whose fibers are connected nodal curves.

Fix $f:\X\to B$ a one-parameter   regular local smoothing of $X$.
Let $\Picfun _f $ denote the \emph{relative Picard functor} of $f$ (often denoted
$ \Picfun _{\X/B}$ in the literature, see \cite[Chap. 8]{BLR} for the general theory).
$  \Picfun _f^d$ is the subfunctor of line bundles of relative degree $d$.
$\Picfun _f$ (resp. $\pfun{d}$) is represented by a scheme $\Pic_f$ (resp. $\Pic^d_f$) over $B$, see
\cite[Thm. 8.2]{BLR}. Note that $\Pic_f$ and $\Pic_f^d$ are not separated over $B$ if $X$ is reducible.

For each multidegree $\un d\in \Z^{\gamma}$, there exists a separated closed subscheme $\picf{\md}\subset \Pic _f^d$  parametrizing
line bundles of relative degree $d$ whose restriction to the closed fiber has multidegree $\md$.
In other words, the special fiber of $\picf{\md}$ is isomorphic to $\Pic^{\md}(X)$ while, clearly, the general fiber
is isomorphic to $\Pic^d(\X_K)$.
Note that $\jacf$ is a group scheme over $B$ and that the $\picf{\md}$'s are torsors under $\jacf$.
It is well-known (see \cite[Sec. 3.9]{capneron}) that if $\un d\equiv \un d'$ then
there is a canonical isomorphism (depending only on $f$)
\[
\iota_f(\md,\md'):\Pic_f^{\md}\la \Pic_f^{\md'}
\]
which restricts to the identity on the generic fiber.
The isomorphism $\iota_f(\un d,\un d')$ is given by tensoring with a line bundle on $\X$ of the form $\O_{\X}(\sum_i n_i C_i)$, for suitably
chosen integers  $n_i\in \Z$ such that $\sum_i n_i=0$.
We shall therefore  identify $\Pic _f^{\md}$ with $\Pic _f^{\md'}$ for all pairs of equivalent multidegrees
$\md$ and
$\md '$. Thus for every $\delta \in \dcg^d$ we define
\begin{equation}
\label{picdel}
\picf{\delta}:=\picf{\md}\  \
\end{equation}
 for every $\md \in \delta$.

For any integer $d$, denote by   $\nerd$ the \emph{N\'eron model} over $B$ of the degree-$d$ Picard variety $\Pic^d\X_K$
of the generic fiber $\X_K$.  Recall that $\nerd$ is smooth and separated over $B$, the generic fiber
$\nerd_K$ is isomorphic to $\Pic^d \X_K$ and $\nerd$ is uniquely characterized by the
following universal property (the N\'eron mapping property, cf.  \cite[Def. 1]{BLR}):
every $K$-morphism $u_K:Z_K\la \nerd_K=\Pic^d \X_K$ defined on  the generic fiber of some scheme
$Z$ smooth over $B$ admits a unique extension  to a $B$-morphism $u:Z\la \nerd$.
Moreover, $\ner$ is a $B$-group scheme while, for every $d\in \Z$, $\nerd$ is a torsor under $\ner$.

The N\'eron models $\nerd$ can be described as the biggest separated quotient of $\Pic_f^d$ (\cite[Sec. 4.8]{raynaud}).
Indeed, since $\Pic_f^d$ is smooth over $B$ and its general fiber is isomorphic to $\Pic^d(\X_K)$, the N\'eron mapping
property yields a map
\begin{equation}\label{sepquot}
 q:\Pic_f^d\to \nerd.
 \end{equation}
The scheme $\Pic_f^d$ can be described as
\[\Pic_f^d \cong \frac{\coprod _{\un d\in \Z^{\gamma}\: :\: |\un d|=d}\picf{\un d}}{\sim_K},
\]
where $\sim_K$ denotes the gluing of the schemes $\picf{\un d}$ along their general fibers, which are isomorphic
to $\Pic^d(\X_K)$. On the other hand, the N\'eron model $\nerd$ can be explicitly described as follows
\begin{fact}\cite[Lemma 3.10]{capneron}\label{expl-Neron}
We have a canonical $B$-isomorphism
\begin{equation}\label{neronglue}
\nerd \cong \frac{\coprod _{\delta \in \dcg^d}\picf{\delta}}{\sim_K}.
\end{equation}
%where $\sim$ denotes the gluing along the general fiber, which is isomorphic to $\Pic^d(\X_K)$.
\end{fact}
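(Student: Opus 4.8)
The statement to prove is Fact \ref{expl-Neron}, the explicit description of the N\'eron model
\begin{equation*}
\nerd \cong \frac{\coprod_{\delta\in\dcg^d}\picf{\delta}}{\sim_K}.
\end{equation*}
This is attributed to \cite[Lemma 3.10]{capneron}, so I only need to sketch the argument. The plan is to use Raynaud's characterization of the N\'eron model as the maximal separated quotient of $\Pic^d_f$, together with the canonical identification $\iota_f(\md,\md')$ of the components $\picf{\md}$ indexed by equivalent multidegrees. Let me denote the right-hand side by $\mathcal{N}$.

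First I would observe that the gluing map from Raynaud's theory gives a $B$-morphism $q:\Pic^d_f\to\nerd$ as in \eqref{sepquot}, and that the identifications $\iota_f(\md,\md')$ restrict to the identity on the generic fiber. Since $\nerd$ is separated, any two points of $\Pic^d_f$ lying in the same component-class $\delta$ — which differ by tensoring with $\O_\X(\sum_i n_i C_i)$, a line bundle that is trivial on the generic fiber — must have the same image under $q$; more precisely the two sections of $\Pic^d_f$ they determine agree generically and hence agree by separatedness of $\nerd$. Thus $q$ factors through the quotient $\mathcal{N}=\coprod_\delta\picf{\delta}/\sim_K$, yielding a $B$-morphism $\bar q:\mathcal{N}\to\nerd$. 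Conversely, $\mathcal{N}$ is smooth over $B$ (each $\picf{\delta}$ is smooth, being a torsor under the smooth group scheme $\jacf$, and the gluing along open generic fibers preserves smoothness) and has generic fiber $\Pic^d(\X_K)$; so by the N\'eron mapping property the identity on the generic fiber extends uniquely to a $B$-morphism $\mathcal{N}\to\nerd$ — this is exactly $\bar q$ again, but it also shows $\mathcal{N}$ maps to $\nerd$ compatibly.

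The heart of the proof is to show $\bar q$ is an isomorphism. For this I would check that $\mathcal{N}$ is separated over $B$: the key point is that, after collapsing the components in each equivalence class, the remaining $c(\Gamma_X)=|\dcg|$ components, glued only along generic fibers, form a separated scheme — equivalently, the component group of the special fiber of $\mathcal{N}$ is $\dcg^d$, which is finite, and no further identifications are forced. One then uses that $\nerd$ is the \emph{maximal} separated quotient of $\Pic^d_f$ (Raynaud), so $q$ factors as $\Pic^d_f\to\mathcal{N}\xrightarrow{\bar q}\nerd$ with $\bar q$ a further separated quotient; but $\mathcal{N}$ and $\nerd$ have the same generic fiber and the same (finite) group of connected components of the special fiber, and $\bar q$ is a homomorphism of smooth $B$-group schemes (after twisting to degree $0$) inducing an isomorphism on identity components $\jacf\to\ner$, hence $\bar q$ is an isomorphism. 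I expect the main obstacle to be the verification that $\mathcal{N}$ is genuinely separated — i.e. that the naive gluing along generic fibers does not accidentally produce a non-separated scheme and that exactly the classes in $\dcg^d$, and no coarser identification, are needed; this is where the arithmetic of the lattice $\LX$ and the structure of $\Pic^d_f$ enter, and it is the content one should extract from \cite[Sec. 3.9, Lemma 3.10]{capneron}.
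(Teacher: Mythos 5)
Your sketch is correct in outline and follows essentially the same route the paper relies on: the statement is quoted from Caporaso's Lemma 3.10 with no proof given in the paper, and the ingredients you use (the map $q$ of \eqref{sepquot} coming from the N\'eron mapping property, Raynaud's description of $\nerd$ as the biggest separated quotient of $\Pic_f^d$, and the identifications $\iota_f(\md,\md')$ by the twisters $\O_{\X}(\sum_i n_iC_i)$) are exactly those recalled in \ref{notner}. The one step you defer --- separatedness of the glued object, i.e.\ that inequivalent multidegrees are never identified in the maximal separated quotient --- is indeed the real content (it rests on the regularity of $\X$: the kernel of $\Pic(\X)\to\Pic(\X_K)$ is generated by the $\O_{\X}(C_i)$, so on multidegrees it is exactly the lattice $\LX$), and be aware that your parenthetical claim that this separatedness is \emph{equivalent} to finiteness of the component group of the special fiber is not accurate, although nothing else in your argument depends on that remark.
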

Therefore, the above map $q$ sends each $\picf{\un d}$ isomorphically into $\picf{[\un d]}$ and identifies $\picf{\un d}$ with
$\picf{\un{d'}}$ if and only if $\un d\equiv \un{d'}$.

Note that, from Fact \ref{expl-Neron}, it follows that the special fiber of the N\'eron model $\nerd$, which we will denote by
$N_X^d$,  is isomorphic to a disjoint union of $c(\Gamma_X)$'s copies of the generalized Jacobian $J(X)$ of $X$.

%\begin{equation}\label{nerondec}
%N_X^d:=\nerd_k=\coprod _{\delta \in \dcg^d}\picX{\delta}.
%\end{equation}

\end{nota}

% However, M. Raynaud proved (see \cite{raynaud}, sections 4,8) that the greatest separated quotient of $\Pic^d_f$ is the N\'eron model $\nerd$ of $\Pic^d_K$.

%Since we consider the total space $\X$ of the morphism to be nonsingular, it is well known that
%the connected component of the identity of the special fiber of $\nerd$ is isomorphic to the generalized Jacobian of $X$ while its %group
%of connected components is isomorphic to the degree class group $\dcg$ of $X$ (reference???raynaud o libro).

%\end{nota}

\begin{nota}\label{pola-nota}
\emph{Polarizations}

\begin{defi}\label{pola-def}
A \emph{polarization} on $X$ is a $\gamma$-tuple of rational numbers $\un q=\{\un q_{C_i}\}$, one for each irreducible component $C_i$ of $X$,
such that $|\un q|:=\sum_i \un q_{C_i}\in \Z$.
\end{defi}

Given a subcurve $Y \subset X$, we set $\un{q}_Y:=\sum_j \un{q}_{C_j}$ where the sum runs
over all the irreducible components $C_j$ of $Y$. Note that giving a polarization $\un q$ is the same as giving an
assignment $(Y\subset X)\mapsto \un q_Y$ which is additive on $Y$, i.e. such that if $Y_1,Y_2\subset X$ are two subcurves of $X$ without
common irreducible components then $\un q_{Y_1\cup Y_2}=\un q_{Y_1}+\un q_{Y_2}$ and such that $\un q_X\in \Z$.

If $Y\subset X$ is a subcurve of $X$ such that
$\un q_Y-\displaystyle \frac{\delta_Y}{2}\in \Z$, then we define the \emph{restriction} of the polarization $\un q$
to $Y$ as the polarization $\un q_{|Y}$ on $Y$ such that
\begin{equation}\label{res-pol}
(\un q_{|Y})_{Z}=\un q_Z-\frac{|Z\cap Y^c|}{2},
\end{equation}
for any subcurve $Z\subset Y$.

Given a subset $S\subset \Xsing$ and a polarization $\un q$ on $X$, we define a polarization $\un{q^S}$
(resp. $\widehat{\un{q^S}}$) on the partial normalization $X_S$ (resp. the partial blowup $\widehat{X_S}$)
of $X$ at $S$ (see the notation in \ref{notnodal}).

\begin{lemdef}\label{pola-norma}
The formula
\begin{equation*}
%\label{pola-norma-eq}
\un{q^S}_{Y_S}:= \un{q}_Y-\frac{|S_e^Y|}{2}-|S_i^Y|,
\end{equation*}
for any subcurve $Y_S\subset X_S$, where $S_e^Y:=S\cap Y\cap Y^c$ and $S_i^Y:=S\cap (Y\setminus Y^c)$, defines a polarization on $X_S$.
\end{lemdef}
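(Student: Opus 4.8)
The plan is to verify that the formula $\un{q^S}_{Y_S}:= \un{q}_Y-\frac{|S_e^Y|}{2}-|S_i^Y|$ defines a polarization on $X_S$, which by Definition \ref{pola-def} amounts to checking two things: that it depends only on the subcurve $Y_S$ of $X_S$ (not on the chosen representative $Y\subset X$), and that the assignment $Y_S \mapsto \un{q^S}_{Y_S}$ is additive on disjoint subcurves and takes an integer value on all of $X_S$. The first point is essentially immediate from the remarks in \ref{notnodal}: every subcurve $Z\subset X_S$ is of the form $Y_S$ for the \emph{uniquely} determined subcurve $Y=\nu_S(Z)\subset X$, so there is no ambiguity; I would just record this.

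The heart of the proof is additivity. So let $Y_S^1, Y_S^2\subset X_S$ be two subcurves with no common irreducible components, with images $Y^1,Y^2\subset X$ under $\nu_S$; then $Y^1,Y^2$ also have no common irreducible components, $(Y^1\cup Y^2)_S = Y_S^1\cup Y_S^2$, and since $\un q$ is a polarization on $X$ we have $\un q_{Y^1\cup Y^2}=\un q_{Y^1}+\un q_{Y^2}$. Thus additivity of $\un{q^S}$ reduces to the combinatorial identity
\[
|S_e^{Y^1\cup Y^2}| = |S_e^{Y^1}| + |S_e^{Y^2}| - 2\bigl(\text{something}\bigr), \qquad |S_i^{Y^1\cup Y^2}| = |S_i^{Y^1}| + |S_i^{Y^2}| + \bigl(\text{same something}\bigr),
\]
where the correction term counts the nodes of $S$ lying in $Y^1\cap Y^2$. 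The key observation is that a node $n\in S$ lying on $Y^1\cap Y^2$ — which is an external node of $Y^1\cup Y^2$, i.e. contributes to $S_e^{Y^1}$ and to $S_e^{Y^2}$ but becomes \emph{internal} to $Y^1\cup Y^2$ and hence contributes to $S_i^{Y^1\cup Y^2}$ — gets weight $\frac12+\frac12 = 1$ on the right side of the first sum and weight $1$ on the left side of the second, and these match the single weight $1$ it receives as an element of $S_i^{Y^1\cup Y^2}$. Nodes of $S$ not on $Y^1\cap Y^2$ are accounted for in an obviously additive way: a node in $S_e^{Y^j}\setminus (Y^1\cap Y^2)$ or in $S_i^{Y^j}$ contributes to exactly one of $Y^1,Y^2$ and keeps the same type in $Y^1\cup Y^2$ (or, if it lies on $Y^1\cap (Y^2)^c$ say, one must check it stays external — it does, since its other branch is in $(Y^1\cup Y^2)^c$). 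The bookkeeping is cleanest if one partitions $S\cap(Y^1\cup Y^2)$ according to which of $Y^1, Y^2, (Y^1\cup Y^2)^c$ the two branches of each node lie in, and tracks the total weight $\frac12 \cdot(\#\text{external incidences}) + 1\cdot(\#\text{internal incidences})$ contributed in each case.

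Finally, for the integrality of $\un{q^S}_{(X_S)} = \un{q^S}_{X_S}$: taking $Y=X$ gives $Y^c=\emptyset$, so $S_e^X = S\cap X\cap \emptyset = \emptyset$ and $S_i^X = S\cap X = S$, hence $\un{q^S}_{X_S} = \un q_X - |S| \in \Z$ since $\un q_X=|\un q|\in\Z$. I expect the only mildly delicate step to be the case analysis in the additivity check — specifically making sure that nodes of $S$ that lie on exactly one of $Y^1,Y^2$ but whose complementary branch behavior changes when passing to $Y^1\cup Y^2$ are correctly tracked (the three-way partition by branch location handles all cases uniformly). Everything else is formal. One should also note in passing that the restriction $\un q_{|Y}$ of \eqref{res-pol} is the special case of this construction for $S=\emptyset$ applied to a subcurve, which provides a useful sanity check on the sign conventions.
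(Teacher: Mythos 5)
Your proof is correct and takes essentially the same route as the paper: additivity is reduced to the combinatorial identities $|S_e^{Y\cup Z}|=|S_e^{Y}|+|S_e^{Z}|-2|S\cap Y\cap Z|$ and $|S_i^{Y\cup Z}|=|S_i^{Y}|+|S_i^{Z}|+|S\cap Y\cap Z|$ (your ``correction term'' is exactly $|S\cap Y^1\cap Y^2|$), and integrality is checked via $\un{q^S}_{X_S}=\un{q}_X-|S|\in\Z$. The only difference is that you spell out the node-by-node bookkeeping that the paper leaves as ``easily checked.''
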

\begin{proof}
We have to show that $\un{q^S}$  is additive, i.e. that for any two subcurves
$Y_S$ and $Z_S$ of $X_S$ without common components it holds $\un{q^S}_{Y_S\cup Z_S}=\un{q^S}_{Y_S}+\un{q^S}_{Z_S}$.
This follows from the additivity of $\un q$ and the
easily checked formulas:
\begin{equation}\label{equa-S}
\begin{sis}
%& \un q_{Y\cup Z}=\un q_{Y}+\un q_{Z}, \\
& |S_i^{Y\cup Z}|=|S_i^{Y}|+|S_i^{Z}|+|S\cap Y\cap Z|,\\
& |S_e^{Y\cup Z}|=|S_e^{Y}|+|S_e^{Z}|-2|S\cap Y\cap Z|.\\
\end{sis}
\end{equation}
We conclude by observing that $\un{q^S}_{X_S}=\un q_X-|S|\in \Z$.
\end{proof}

The proof of the following Lemma-Definition is trivial.

\begin{lemdef}\label{pola-blow}
The formula
\begin{equation*}
\widehat{\un{q^S}}_{Z}=\begin{cases}
0 & \text{ if } Z\subseteq E_S, \\
\un{q}_{\pi_S(Z)} & \text{ if } Z\not\subseteq E_S,
\end{cases}
\end{equation*}
for any subcurve $Z\subset \widehat{X_S}$,  defines a polarization on $\widehat{X_S}$.
\end{lemdef}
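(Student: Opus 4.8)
The plan is to verify the two defining properties of a polarization as given in Definition \ref{pola-def}: additivity over subcurves with no common components, and integrality of the total value $\widehat{\un{q^S}}_{\widehat{X_S}}$. Recall that, as observed after Definition \ref{pola-def}, giving a polarization is equivalent to giving an additive assignment $(Z\subset X)\mapsto \un q_Z$ with $\un q_X\in\Z$, so these are exactly the conditions to check.

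\textbf{Additivity.} Let $Z_1,Z_2\subset\widehat{X_S}$ be subcurves with no common irreducible components; we must show $\widehat{\un{q^S}}_{Z_1\cup Z_2}=\widehat{\un{q^S}}_{Z_1}+\widehat{\un{q^S}}_{Z_2}$. Write $Z_j=A_j\cup B_j$ where $A_j\subseteq E_S$ is the union of the exceptional components contained in $Z_j$ and $B_j$ is the union of the remaining (non-exceptional) components, so that $B_j$ has no component inside $E_S$. By definition $\widehat{\un{q^S}}_{Z_j}=\widehat{\un{q^S}}_{B_j}=\un q_{\pi_S(B_j)}$ when $B_j\neq\emptyset$ (and the value is $0$ when $B_j=\emptyset$, consistently with the convention $\un q_\emptyset=0$). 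Since $Z_1$ and $Z_2$ share no components, neither do $B_1$ and $B_2$, and hence neither do the subcurves $\pi_S(B_1),\pi_S(B_2)\subset X$ — here one uses that $\pi_S$ restricted to the non-exceptional locus $X_S\subset\widehat{X_S}$ maps distinct irreducible components to distinct irreducible components of $X$ (it is the composition of an inclusion with the partial normalization map, which is a bijection on components). Then additivity of $\un q$ on $X$ gives $\un q_{\pi_S(B_1)\cup\pi_S(B_2)}=\un q_{\pi_S(B_1)}+\un q_{\pi_S(B_2)}$, and since $\pi_S(B_1\cup B_2)=\pi_S(B_1)\cup\pi_S(B_2)$ and the non-exceptional part of $Z_1\cup Z_2$ is $B_1\cup B_2$, we conclude $\widehat{\un{q^S}}_{Z_1\cup Z_2}=\widehat{\un{q^S}}_{Z_1}+\widehat{\un{q^S}}_{Z_2}$.

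\textbf{Integrality.} We compute $\widehat{\un{q^S}}_{\widehat{X_S}}$. The curve $\widehat{X_S}$ decomposes (up to components) as $X_S\cup E_S$, where $X_S$ (via $i_S$) contains no exceptional component and $E_S$ is the union of the exceptional $\P^1$'s. By additivity, $\widehat{\un{q^S}}_{\widehat{X_S}}=\widehat{\un{q^S}}_{X_S}+\widehat{\un{q^S}}_{E_S}=\un q_{\pi_S(X_S)}+0=\un q_X$, which lies in $\Z$ by the definition of a polarization on $X$. This completes the verification, and indeed, as the statement itself notes, the argument is entirely routine; the only point requiring a moment's care is the bookkeeping in the additivity step, namely that passing to non-exceptional parts and applying $\pi_S$ preserves the "no common components" hypothesis so that additivity of $\un q$ applies. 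There is no real obstacle here.
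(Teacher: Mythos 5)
Your verification is correct and is exactly the routine check the paper has in mind: the paper simply declares this Lemma-Definition trivial (no written proof), and your argument — additivity via the fact that $\pi_S$ restricted to the non-exceptional components is a bijection onto the components of $X$, plus $\widehat{\un{q^S}}_{\widehat{X_S}}=\un q_X\in\Z$ — is the straightforward verification of the two conditions in Definition \ref{pola-def}. Nothing to add.
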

%\begin{proof}
%Trivial.
%\end{proof}

\noindent In the special case of the total blowup $\widehat{X}=\widehat{X_{\Xsing}}$, we set
$\widehat{\un q}:=\widehat{\un{q^{\Xsing}}}$.

In the last part of the paper, we will need the concept of generic and non-degenerate polarizations.
First, imitating \cite[Def. 3.4]{est2}, we give the following

\begin{defi}\label{def-int}
A polarization $\un q$ is called \emph{integral} at a subcurve $Y\subset X$ if
$\un q_Z-\displaystyle \frac{\delta_Z}{2} \in \Z$ for any connected component $Z$ of $Y$ and of $Y^c$.
\end{defi}

Using the above definition, we can give the following

\begin{defi}\label{def-pol}
\noindent
\begin{enumerate}[(i)]
\item \label{def-pol1} A polarization $\un q$ is called \emph{general} if it is not integral at any proper subcurve $Y\subsetneq X$.
\item \label{def-pol2} A polarization $\un q$ is called \emph{non-degenerate} if it is not integral at any proper subcurve $Y\subsetneq X$
which is not a spine of $X$.
\end{enumerate}
\end{defi}

\end{nota}

\begin{nota}\emph{Semistable, torsion-free, rank $1$ sheaves}
\label{notsheaves}

Let $X$ be a connected nodal curve of genus $g$.
Let $\I$ be a coherent sheaf on $X$. We say that $\I$ is \emph{torsion-free} (or \emph{depth $1$} or \emph{of pure dimension} or \emph{admissible})
if its associated points are generic points of $X$. Clearly, a torsion-free sheaf $\I$ can be not %locally
free only at the nodes of $X$;
we denote by   $NF(\I)\subset \Xsing$ the subset of the nodes
of $X$ where $\I$ is not free ($\NF$ stands for not free).
%The set $NF(\I)$ admits a partition
%\[NF(\I)=NF_e(\I)\coprod NF_i(\I),$$
%where $NF_e(\I)$ is the subset of $NF(\I)$ consisting of the nodes at which
%two different irreducible components of $X$ meet (we call these external nodes),
%and $NF_i(\I)$ is the subset of $NF(\I)$ consisting of the nodes which are
%self-intersection of  an irreducible component of $X$ (we call these internal
%nodes).
We say that $\I$ is of \emph{rank 1} if $\I$ is invertible on a
dense open subset of $X$. We say that $\I$ is \emph{simple} if $\End(\I) = k $. Each line
bundle on $X$ is torsion-free of rank $1$ and simple.

For each subcurve $Y$ of $X$, let $\I_Y$ be the restriction $\I_{|Y}$ of $\I$ to $Y$ modulo torsion.
%that is, the image of the natural map
%$$\I_{|Y}\to \sum_{i=1}^m (\I_{|Y})_{\xi_i},$$
%where $\xi_1,\cdots, \xi_m$ are the generic points of Y .
If $\I$ is a torsion-free (resp. rank $1$)
sheaf on $X$, so is $\I_Y$ on $Y$.
We let $\deg_Y (\I)$ denote the degree of $\I_Y$, that is, $\deg_Y(\I) := \chi(\I_Y )-\chi(\O_Y)$.

It is a well-known result of Seshadri (see \cite{ses}) that torsion-free, rank 1 sheaves on $X$ can be described either via line bundles on partial normalizations of $X$ or via certain line bundles on partial blowups of $X$.
The precise statement is the following

\begin{prop}\label{sheaf-linebun}
\noindent
\begin{enumerate}[(i)]
\item \label{sheaf-linebun1}
For any $S\subset \Xsing$, the commutative diagram \eqref{diag-S} induces a commutative diagram
\begin{equation}\label{diag-Pic}
\xymatrix{
\Pic(X_S)\ar[dr]_{(\nu_S)_*}^{\cong} & & \Pic(\widehat{X_S})_{\rm prim}\ar@{->>}[dl]^{(\pi_S)_*}
\ar@{->>}[ll]_{i_S^*} \\
& {\rm Tors}_S(X) & \\
}
\end{equation}
where $\Pic(\widehat{X_S})_{\rm prim}$ denotes the line bundles on $\widehat{X_S}$ that have degree
$-1$ on each exceptional component of the morphism $\pi_S$ and ${\rm Tors}_S(X)$ denotes the set
of torsion-free, rank $1$ sheaves $\I$ on $X$ such that $\NF(\I)=S$.
Moreover we have that
\begin{enumerate}
\item  The maps $i_S^*$ and $(\pi_S)_*$ are surjective;
\item The map $(\nu_S)_*$ is bijective with inverse given by sending
a sheaf $\I\in {\rm Tors}_S(X)$ to the line bundle on $X_S$ obtained as the quotient of
$(\nu_S)^*(\I)$  by its torsion subsheaf.
\end{enumerate}

%the pushforward via the partial normalization map $\nu_S:X_S\to X$ defines a bijection
%$$\begin{aligned}
%(\nu_S)_*: \Pic(X_S) & \stackrel{\cong}{\longrightarrow}
%\left\{
%\begin{aligned}
%&\text{Torsion-free, rank } 1 \text{ sheaves }  \I \text{ on } X \\
%& \text{ such that } \NF(\I)=S \\
%\end{aligned}
%\right\}\\
%L & \mapsto (\nu_S)_*(L),
%\end{aligned}$$
%whose inverse is given by sending a sheaf $\I$ on $X$ to the line bundle on $X_S$ obtained as the quotient
%of $(\nu_S)^*(\I)$  by its torsion subsheaf.
%$\displaystyle \frac{(\nu_S)^*(\I)}{{\rm Tors}(\nu_S)^*(\I)}$, where

\item \label{sheaf-linebun2}
The above diagram \eqref{diag-Pic} is equivariant with respect to the natural actions of the generalized
Jacobians of $X_S$, $\XSh$ and $X$ and the natural morphisms:
\begin{equation}\label{diag-Jac}
\xymatrix{
J(X_S) & & J(\XSh) \ar@{->>}[ll]_{i_S^*} \\
& J(X) \ar@{->>}[ul]^{\nu_S^*} \ar[ur]_{\pi_S^*}^{\cong}& \\
}
\end{equation}
Explicitly, for any $L\in \Pic(\widehat{X_S})_{\rm prim}$, $M\in \Pic(X_S)$, $\alpha\in J(X)$ and
$\beta\in J(\XSh)$, we have that
\begin{equation}\label{form-equiv}
\begin{sis}
& i_S^*(\beta\otimes L)=i_S^*(\beta)\otimes i_S^*(L), \\
& (\pi_S)_*(\pi_S^*\alpha\otimes L)=\alpha\otimes (\pi_S)_*(L), \\
& (\nu_S)_*(\nu_S^*\alpha \otimes M)=\alpha\otimes (\nu_S)_*(M).
\end{sis}
\end{equation}
%bijection $(\nu_S)_*$ is equivariant with respect to natural surjective homomorphism
%$\nu_S^*:J(X)\twoheadrightarrow J(X_S)$
%and the actions, via tensor products, of $J(X_S)$ on $\Pic(X_S)$
%and of $J(X)$ on the right hand side of the bijection in (\ref{sheaf-linebun1}).
%torsion-free, rank $1$ sheaves on $X$.
% Explicitly, we have that
%$$(\nu_S)_*(L\otimes \nu_S^*M)=(\nu_S)_*(L)\otimes M,$$
%for any $L\in \Pic(X_S)$ and any $M\in J(X)$.
In particular, the action of $J(X)$ on ${\rm Tors}_S(X)$
factors through the map $\nu_S^*:J(X)\twoheadrightarrow J(X_S)$.
 \item \label{sheaf-linebun3}
For any subcurve $Y\subset X$ and any $M\in \Pic(X_S)$, it holds
\[\deg_Y(\nu_S)_*(M)=\deg_{Y_S}M+|S_i^Y|,\]
where $S_i^Y:=S\cap (Y\setminus Y^c)$ (as in Lemma-Definition \ref{pola-norma}).
 \end{enumerate}
\end{prop}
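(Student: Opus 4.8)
The plan is to prove Proposition~\ref{sheaf-linebun} by tracking torsion-free rank~$1$ sheaves through the partial normalization and blowup, treating the three parts in order and reducing everything to the local picture at a node.

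\medskip

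\textbf{Part (i).}
First I would recall the local structure of a torsion-free rank~$1$ sheaf $\I$ at a node $n$ of $X$: completing, $\widehat{\O}_{X,n}\cong k[[x,y]]/(xy)$, and a finitely generated torsion-free module of rank~$1$ over this ring is either free or isomorphic to the maximal ideal $\mathfrak{m}=(x,y)$; the latter occurs exactly at the nodes in $\NF(\I)$.
This gives the decomposition of the domain $\operatorname{Tors}_S(X)$ by the locus $\NF$.
To construct the map $(\nu_S)_*$: given $M\in\Pic(X_S)$, the sheaf $(\nu_S)_*M$ is torsion-free of rank~$1$ on $X$ because $\nu_S$ is finite and birational, and one checks locally at each $n\in S$ that pushing forward a line bundle along the normalization of a node produces exactly the module $\mathfrak{m}$, so $\NF((\nu_S)_*M)=S$; thus $(\nu_S)_*$ lands in $\operatorname{Tors}_S(X)$.
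Bijectivity follows by exhibiting the inverse as stated: for $\I\in\operatorname{Tors}_S(X)$, the sheaf $\nu_S^*\I$ modulo torsion is a line bundle on $X_S$ (again a purely local verification: $\mathfrak{m}\otimes_{k[[x,y]]/(xy)}(k[[x]]\times k[[y]])$ modulo torsion is free of rank~$1$), and the two constructions are mutually inverse by the projection formula and the fact that $\nu_S$ is an isomorphism away from the preimages of $S$.
For the blowup side: a line bundle $L$ on $\widehat{X_S}$ that has degree $-1$ on every exceptional $\P^1$ pushes forward under $\pi_S$ (which contracts those $\P^1$'s) to a torsion-free rank~$1$ sheaf, because $R^0\pi_{S*}$ of such an $L$ restricted near a contracted $\P^1$ with $\O_{\P^1}(-1)$ yields the ideal-sheaf module $\mathfrak{m}$ at the image node; hence $(\pi_S)_*L\in\operatorname{Tors}_S(X)$.
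Surjectivity of $(\pi_S)_*$ and of $i_S^*$ then follows: $i_S^*$ restricts a line bundle on $\widehat{X_S}$ to the complementary subcurve $X_S$, and since any line bundle on $X_S$ extends to a line bundle on $\widehat{X_S}$ of prescribed degree $-1$ on each exceptional component (glue $\O_{\P^1}(-1)$ along the two attaching points, using that $\operatorname{Pic}$ of a nodal curve surjects onto the product of $\operatorname{Pic}$ of the components once the gluing data is fixed), the composite $(\pi_S)_*\circ(\text{extension})=(\nu_S)_*\circ i_S^*$ hits all of $\operatorname{Tors}_S(X)$; commutativity $(\pi_S)_*=(\nu_S)_*\circ i_S^*$ is checked locally at a node.

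\medskip

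\textbf{Part (ii).}
The equivariance is essentially the projection formula.
For $\alpha\in J(X)$ a line bundle and any torsion-free $\I$, one has $(\pi_S)_*(\pi_S^*\alpha\otimes L)\cong\alpha\otimes(\pi_S)_*L$ by the projection formula, and likewise for $\nu_S$; for $i_S^*$ it is just multiplicativity of pullback, $i_S^*(\beta\otimes L)=i_S^*\beta\otimes i_S^*L$.
That $\pi_S^*:J(X)\to J(\widehat{X_S})$ is an isomorphism comes from $\pi_S$ having connected fibers consisting of trees of $\P^1$'s (so $\pi_{S*}\O_{\widehat{X_S}}=\O_X$ and $R^1\pi_{S*}\O=0$), while $\nu_S^*:J(X)\twoheadrightarrow J(X_S)$ and $i_S^*:J(\widehat{X_S})\twoheadrightarrow J(X_S)$ are the standard surjections from normalization/restriction.
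The factorization of the $J(X)$-action on $\operatorname{Tors}_S(X)$ through $\nu_S^*$ then follows from the displayed formula $(\nu_S)_*(\nu_S^*\alpha\otimes M)=\alpha\otimes(\nu_S)_*M$ together with the bijectivity of $(\nu_S)_*$ from part~(i): if $\nu_S^*\alpha\cong\O_{X_S}$ then $\alpha$ acts trivially.

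\medskip

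\textbf{Part (iii).}
This is a degree bookkeeping computation.
Write $\I=(\nu_S)_*M$ and restrict to a subcurve $Y\subset X$; I would compute $\deg_Y\I=\chi(\I_Y)-\chi(\O_Y)$ via the pushforward along the restricted map $\nu_S|_{Y_S}:Y_S\to Y$, noting $\I_Y$ (restriction modulo torsion) equals $(\nu_S|_{Y_S})_*(M|_{Y_S})$ up to the contribution of the nodes of $Y$ lying in $S$.
Each node of $X$ in $S$ that is an \emph{internal} node of $Y$ (i.e.\ in $S_i^Y=S\cap(Y\setminus Y^c)$) gets separated into two branches both lying on $Y_S$, contributing $+1$ to $\chi$ after normalization; a node of $S$ lying on $Y\cap Y^c$ separates into one branch on $Y_S$ and one off it, so contributes nothing to the comparison of $\deg_Y$ with $\deg_{Y_S}$.
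Carefully chasing the exact sequence $0\to\O_{Y_S}\to(\nu_S|_{Y_S})_*\O_{Y_S}\to(\text{skyscraper of length }|S_i^Y|)\to 0$ tensored with $M$ gives exactly $\deg_Y(\nu_S)_*M=\deg_{Y_S}M+|S_i^Y|$.

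\medskip

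\textbf{Main obstacle.}
The technical heart is part~(i): making the local analysis at a node completely rigorous — identifying $(\pi_S)_*\O_{\P^1}(-1)$ with the ideal sheaf $\mathfrak{m}$ at the image point, checking that $\nu_S^*\mathfrak{m}$ modulo torsion is invertible, and verifying the two constructions are mutually inverse and that the triangle \eqref{diag-Pic} commutes — together with the surjectivity statements, which require knowing that every line bundle on $X_S$ extends to a primitive line bundle on $\widehat{X_S}$.
None of this is deep (it is classical, going back to Seshadri), but it requires a clean treatment of the local model $k[[x,y]]/(xy)$ and its normalization.
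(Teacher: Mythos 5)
Your parts (ii) and (iii) are essentially the paper's own arguments: (ii) is exactly multiplicativity of $i_S^*$ plus the projection formula for $\nu_S$ and $\pi_S$, and (iii) is the same Euler--characteristic bookkeeping the paper performs (note your displayed sequence should begin with $\O_Y$, not $\O_{Y_S}$, and it cannot literally be tensored with $M$, which lives on $Y_S$; the clean route is $\chi(\I_Y)=\chi(M_{|Y_S})$ together with $\chi(\O_{Y_S})=\chi(\O_Y)+|S_i^Y|$, which is what the paper does). For part (i) the paper gives no argument at all, only the citation to \cite{alexeev}; your plan to reprove it from the local model $k[[x,y]]/(xy)$ is legitimate, and the individual local facts you invoke (classification of torsion-free rank $1$ modules, $(\nu_S)_*$ of a line bundle being $\mathfrak m$ at normalized nodes, the inverse via pullback modulo torsion, extension of any $M\in\Pic(X_S)$ to a line bundle of degree $-1$ on the exceptional components) are all correct.

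The genuine gap is the step ``commutativity $(\pi_S)_*=(\nu_S)_*\circ i_S^*$ is checked locally at a node''. This is a statement that two global sheaves are isomorphic, and a stalkwise check cannot decide it: at each node of $S$ both $(\pi_S)_*L$ and $(\nu_S)_*(i_S^*L)$ are isomorphic to the maximal ideal $\mathfrak m$, yet globally they disagree with the stated degree $-1$ convention. Indeed, since $H^0(E,\O_E(-1))=H^1(E,\O_E(-1))=0$ one has $R^1(\pi_S)_*L=0$ and the stalk of $(\pi_S)_*L$ at a node consists of pairs of branch sections vanishing at the attachment points, so $\deg (\pi_S)_*L=\deg L=\deg_{X_S}(i_S^*L)-|S|$, whereas by part (iii) $\deg (\nu_S)_*(i_S^*L)=\deg_{X_S}(i_S^*L)+|S|$; for the nodal cubic with $i_S^*L=\O_{\P^1}$ and degree $-1$ on the exceptional component one gets $(\pi_S)_*L\cong\nu_*\O_{\P^1}(-2)$, not $\nu_*\O_{\P^1}$. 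What your local picture actually yields, once you push forward the restriction map $L\to L_{|X_S}$ and compute its cokernel (length $2$ per node of $S$), is $(\pi_S)_*L\cong(\nu_S)_*\bigl(i_S^*L\otimes\O_{X_S}(-D)\bigr)$, with $D$ the divisor of the $2|S|$ points of $X_S$ lying over $S$; the literal equality of the two composites would instead hold for the degree $+1$ convention. So the compatibility in the triangle has to be formulated and proved globally, with this twist accounted for, and your surjectivity argument for $(\pi_S)_*$, which is routed through that commutativity, must be adjusted accordingly (extend $M\otimes\O_{X_S}(D)$ rather than $M$ to a primitive line bundle) --- an easy repair, but as written the key step would fail.
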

\begin{proof}
Part (i) is a reformulation of \cite[Lemma 1.5(i) and Lemma 1.9]{alexeev}).

Part (ii) follows from the multiplicativity of pull-back map $i_S^*$ and the projection formula
applied to the morphisms $\nu_S$ and $\pi_S$.

Part (iii): First of all observe that the restriction $((\nu_S)_*M)_{|Y}$ is equal to the pushforward via $(\nu_S)_{|Y_S}:Y_S\to Y$ of the restriction $M_{|Y_S}=M_{Y_S}$.
Since $(\nu_S)_{|Y_S}$ is a finite map, we get the equality $\chi(((\nu_S)_*M)_{|Y})=\chi(M_{Y_S})$ which, combined with Riemann-Roch, gives that
\begin{equation*}
\deg ((\nu_S)_*M)_{|Y}+1-g(Y)=\chi(((\nu_S)_*M)_{|Y})=\chi(M_{Y_S})=\deg_{Y_S}M+1-g(Y_S).\tag{*}
\end{equation*}
Since $Y_S$ is the normalization of $Y$ at $S\cap Y$, we have that $g(Y_S)=g(Y)-|S\cap Y|$ which, combined with (*), gives that
\begin{equation*}
\deg ((\nu_S)_*M)_{|Y}=\deg_{Y_S}M+|S\cap Y|.\tag{**}
\end{equation*}
Clearly, the torsion subsheaf of $((\nu_S)_*M)_{|Y}$ is equal to $\displaystyle \bigoplus_{n \in S\cap Y\cap Y^c} \un{k}_n$, where  $\un{k}_n$ is
the skyscraper sheaf supported on $n$ and with stalk equal to the base field $k$. Therefore
\begin{equation*}
\deg_Y ((\nu_S)_*M)=\deg ((\nu_S)_*M)_{Y}=\deg ((\nu_S)_*M)_{|Y}-|S\cap Y\cap Y^c|. \tag{***}
\end{equation*}
We conclude by putting together (**) and (***).
\end{proof}

Later, we will need the concepts of semistability, $P$-quasistability and stability of a torsion-free, rank $1$ sheaf on $X$ with respect to a polarization on $X$
and to a smooth point $P\in \Xsm$.
Here are the relevant definitions.

\begin{defi}\label{sheaf-ss-qs}
\noindent Let $\un q$ be a polarization on $X$ and let $P\in\Xsm$ be a smooth point of $X$.
Let $\I$ be a torsion-free, rank-1 sheaf on $X$ of degree $d=|\un q|$.
\begin{enumerate}[(i)]
\item \label{sheaf-ss} We say that $\I$ is \emph{semistable} with respect to $\un q$ (or $\un q$-semistable) if
for every proper subcurve $Y$ of $X$, we have that
\begin{equation}\label{multdeg-sh1}
\deg_Y(\I)\geq \un q_Y-\frac{\delta_Y}{2}
\end{equation}
\item \label{sheaf-qs} We say that $\I$ is
$P$-\emph{quasistable} with respect to $\un q$ (or $\un q$-P-quasistable) if it is semistable with respect to $\un q$
and if the inequality (\ref{multdeg-sh1}) above is strict when $P\in Y$.
\item \label{sheaf-s} We say that $\I$ is \emph{stable} with respect to $\un q$ (or $\un q$-stable) if it is semistable with respect to $\un q$
and if the inequality (\ref{multdeg-sh1}) is always strict.
\end{enumerate}
\end{defi}

In what follows we compare our notation with the other notations used in the literature.

%In order to compare our notation with the other notations used in the literature, we state the following

\begin{remark}\label{inequ}
\begin{enumerate}[(i)]
\noindent \item \label{inequ1} Given a vector bundle $E$ on $X$, we define the polarization $\un q^{E}$ on $X$ by setting
\[\un{q}^E_{Y}=-\frac{\deg(E_{|Y})}{\rk(E)}+\frac{\deg_{Y}(\omega_X)}{2},\]
 for each subcurve $Y$ (or equivalently
for each irreducible component $C_i$) of $X$.
Then it is easily checked that the above notions of semistability (resp. $P$-quasistability, resp. stability) with respect to $\un q^E$ agree with the
notions of semistability (resp. $P$-quasistability, resp. stability) with respect to $E$ in the sense of
\cite[Sec. 1.2]{est1}. Note that, for any subcurve $Y\subset X$ such that $\un q_Y-\displaystyle \frac{\delta_Y}{2}\in \Z$, we
have that $(\un q^E)_{|Y}=\un q^{E_{|Y}}$.
\item \label{inequ2} In the particular case where
\begin{equation}\label{can-pol}
\un{q}_Y=d\cdot \frac{\deg_Y(\omega_X)}{2g-2},
\end{equation}
for a certain integer $d\in \Z$, the inequality (\ref{multdeg-sh1}) reduces to the well-known basic inequality of
Gieseker-Caporaso (see \cite{caporaso}). In this case, $\un q$ will be called the \emph{canonical polarization of degree d}.
\end{enumerate}
\end{remark}

Given a  sheaf $\I$ semistable with respect to a polarization $\un q$, there are
connected subcurves $Y_1,\ldots,Y_q$ covering $X$ and a filtration
\[0 = \I_0 \subsetneqq \I_1 \subsetneqq \ldots, \subsetneqq \I_{q-1}\subsetneqq \I_q=\I\]
such that the quotient $\I_j/\I_{j+1}$ is a stable sheaf on $Y_j$ with respect to $\un q_{|Y_j}$ for
each $j = 1, \ldots, q$. The above filtration is called a \emph{Jordan-H\"older filtration}. The
sheaf $\I$ may have many Jordan-H\"older filtrations but the collection of subcurves
$\SS(\I) := \{Y_1,\ldots,Y_q\}$ and the isomorphism class of the sheaf
\[\Gr(\I) := \I_1/\I_0 \oplus \I_2/\I_1\oplus \ldots \oplus \I_q/\I_{q-1}\]
depend only on $\I$, by the Jordan-H\"older theorem. Notice that $\Gr(\I)$ is also $\un q$-semistable
and that
\[\Gr(\I)\cong \bigoplus_{Z\in \SS(\I)} \Gr(\I)_Z.\]
A $\un q$-semistable sheaf $\I$ is called \emph{polystable} if $\I\cong \Gr(\I)$.

We say that two $\un q$-semistable sheaves $\I$ and $\I'$ on $X$ are \emph{$S$-equivalent} if $\SS(\I) = \SS(\I')$
and $\Gr(\I) \cong \Gr(\I')$.  Note that in each $S$-equivalence class
of $\un q$-semistable sheaves, there is exactly one $\un q$-polystable sheaf.

\end{nota}

%Following \cite{est1}, we define a {\em polarization} on $X$ as being a vector bundle
%$E$ on $X$ whose slope is an integer, i.e. $\mu(E)=\displaystyle \frac{\deg(E)}{\rk(E)}\in \Z$.

%\begin{defi}
%\label{sheaf-ss-qs}
%\noindent Let $E$ be a polarization on $X$ and let $P\in\Xsm$ be a smooth point of $X$ .
%Let $\I$ be a torsion-free, rank-1 sheaf on $X$ of degree $d=-\mu(E)+g-1$.
%\begin{enumerate}[(i)]
%\item
%\label{sheaf-ss}
%We say that $\I$ is \emph{semistable} with respect to $E$ if
%for every proper subcurve $Y$ of $X$, we have that
%\begin{equation}
%\label{multdeg-sh1}
%\deg_Y(\I)\geq \mu(E_{|Y})+g_Y-1=-\frac{\deg (E_{|Y})}{\rk(E)}+g_Y-1
%\end{equation}
%\item
%\label{sheaf-qs}
%We say that $\I$ is
%$P$-\emph{quasistable} with respect to $E$ if it is semistable with respect to $E$
%and if the inequality (\ref{multdeg-sh1}) above is strict when $P\in Y$.
%\item
% \label{sheaf-s}
%We say that $\I$ is \emph{stable} with respect to $E$ if it semistable with respect to $E$
%and if the inequality (\ref{multdeg-sh1}) is always strict.
%\end{enumerate}
%\end{defi}

%The polarization $E_d:=\omega_X^{\otimes(-d+ g-1)}\oplus\mathcal O_X^{\oplus(2g-3)}$ is called the \emph{canonical polarization} on $X$.
%Then a torsion free sheaf $\I$ of rank $1$ on $X$ is semistable with respect to $E_d$ if and only if it has degree $d$ and,
%for very proper subcurve $Y$ of $X$, the following inequality (known in the literature as the Basic Inequality since the work of Caporaso in \cite{caporaso}) holds
%$$\deg_Y\I\geq\frac{d}{2g-2}\deg_Y\omega_X-\frac{\delta_Y}{2}.$$

\begin{nota}\label{finecoarse}
{\emph{Fine and coarse compactified Jacobians}}

For any smooth point $P\in X$ and polarization $\un q$ on $X$, there is a $k$-projective variety $J_X^P(\un q)$,
which we call \emph{fine compactified Jacobian},
parametrizing  $\un q$-P-quasistable sheaves on the curve $X$
(see \cite[Thm. A, p. 3047]{est1} and \cite[Thm. 2.4]{est2}).
More precisely, $J_X^P(\un q)$ represents
the functor that associates to each scheme $T$ the set of
$T$-flat coherent sheaves $\I$ on $X\times T$ such that
$\I|_{X\times t}$ is $\un q$-P-quasistable
for each $t\in T$, modulo the following equivalence relation $\sim$.
%We say that such
%sheaves $\I$ are torsion-free, rank-1 and $P-\un q$-quasistable on $X\times T/T$.
We say that two such sheaves $\I_1$ and $\I_2$
are equivalent, and denote $\I_1\sim\I_2$,
if there is an invertible sheaf $\N$ on $T$ such that
$\I_1\cong\I_2\otimes p_2^*\N$, where $p_2: X\times T\to T$ is the
projection map.

There are other two varieties closely related to $J_X^P(\un q)$ (see \cite[Sec. 4]{est1}): the variety $J_X^s(\un q)$ parametrizing $\un q$-stable
sheaves and the variety $J_X^{ss}(\un q)$ parametrizing $\un q$-semi\-stable simple sheaves. We have open inclusions
\[J_X^s(\un q)\subset J_X^P(\un q)\subset J_X^{\rm ss}(\un q),\]
where the last inclusion follows from the fact that a $\un q$-P-quasistable sheaf is simple, as it follows
easily from \cite[Prop. 1]{est1}. It turns out that $J_X^s(\un q)$ is separated but, in general, not universally closed, while
$J_X^{ss}(\un q)$ is universally closed but, in general, not separated (see \cite[Thm. A]{est1}).

According to \cite[Thm. 15, p. 155]{ses}, there exists
a projective variety $U_X(\un q)$, which we call \emph{coarse compactified Jacobian},
coarsely representing the functor $\text{\bf U}$
that associates to each scheme $T$ the set of $T$-flat coherent sheaves $\I$ on $X\times T$ such that
$\I_{X\times t}$ is $\un q$-semistable for each $t\in T$.
%, modulo the above equivalence relation $\sim$.
More precisely, there is a map $\text{\bf U}\to U_X(\un q)$ such that, for any other $k$-scheme $Z$, each map $\text{\bf U}\to Z$
is induced by composition with a unique map $U_X(\un q)\to Z$.
Moreover, the $k$-points on $U_X(\un q)$ are in one-to-one correspondence
with the $S$-equivalence classes of $\un q$-semistable sheaves on $X$, or equivalently
with $\un q$-polystable sheaves on $X$ since in each $S$-equivalence class of $\un q$-semistable sheaves there exists exactly one $\un q$-polystable
sheaf. By convention, when we write $\I\in U_X(\un q)$, we implicitly assume that
$\I$ is polystable. We denote by
\[U_X^s(\un q)\subset U_X(\un q)\]
the open subset parametrizing $\un q$-stable sheaves.

Since $J_X^P(\un q)$ represents a functor,
there exists a universal $\un q$-P-quasistable sheaf on
$X\times J_X^P(\un q)$ (uniquely determined up to tensoring with the pull-back
of a line bundle on $J_X^P(\un q)$), and hence a well-defined induced map
\begin{equation}\label{Smap}
\Phi : J_X^P(\un q)\longrightarrow U_X(\un q).
\end{equation}
This map is surjective (by \cite[Thm. 7]{est1}) and its fibers parametrize
$S$-equivalence classes of $\un q$-P-quasistable sheaves (see also \cite[p. 178]{est2}).
The map $\Phi$ fits in the following diagram
\begin{equation}\label{Smap-dia}
\xymatrix{
J_X^s(\un q) \ar@{^{(}->}[r]\ar[d]^{\Phi^s}_{\cong} & J_X^P(\un q) \ar@{->>}[d]^{\Phi}\ar@{^{(}->}[r] & J_X^{ss}(\un q) \ar@{->>}[dl]^{\Phi^{ss}} \\
U_X^s(\un q) \ar@{^{(}->}[r] & U_X(\un q) &
}
\end{equation}

To compare our notations with the others used in the literature, we observe the following

\begin{remark}\label{nota-Jac}
\noindent
\begin{enumerate}[(i)]
\item \label{nota-Jac1} Given a vector bundle $E$ on $X$ and a smooth point $P\in \Xsm$,  the variety
$J_X^P(\un q^E)$ coincides with the variety $J_E^P$ in Esteves's notation (see \cite{est1}). Similarly,
the variety $J_X^s(\un q^E)$ (resp. $J_X^{ss}(\un q^E)$) coincides with $J_E^s$ (resp. $J_E^{ss}$)
in Esteves's notation.

\item \label{nota-Jac1bis}
Let $\phi$ be an element of $\partial C_1(\Gamma_X,\Q)\subset C_0(\Gamma_X,\Q)$ (see \cite[Sec. 1]{alexeev}), i.e.
a collection of rational numbers $\{\phi_v\}$ for any vertex $v$ of $\Gamma_X$ such that $\displaystyle \sum_{v\in V(\Gamma_X)}\phi_v=0$.
We can associate to $\phi$ a polarization $\un \phi$ such that $|\un \phi|=0$ by putting
\begin{equation}\label{pola-homology}
\un \phi_{C_v}=\phi_v
\end{equation}
if $C_v$ is the irreducible component of  $X$ corresponding to the vertex $v$ of $\Gamma_X$. Then the  Oda-Seshadri's compactified Jacobian ${\rm Jac}(X)_{\phi}$ is isomorphic to $U_X(\un \phi)$
(see \cite{OS} and \cite{alexeev}).

 Conversely, given a polarization $\un q$, consider a polarization $\un d$ such that  $|\un q|=|\un d|$ and such that $\un d$ is integral,
i.e. $\un d_Y\in \Z$ for any subcurve $Y\subseteq X$.
Define a new polarization $\un \phi$ by $\un \phi_Y:=\un q_Y-\un d_Y$ for any subcurve $Y\subseteq X$.  In particular, we have that
$|\un \phi|=0$. Define an element $\phi\in \partial C_1(\Gamma_X,\Q)\subset C_0(\Gamma_X,\Q)$ by the equation (\ref{pola-homology}).
Then the variety $U_X(\un q)$ is isomorphic to ${\rm Jac}(X)_{\phi}$.
% (see \cite{OS} and also \cite[Sec. 1]{alexeev}).
Note that this is independent of the choice of the auxiliary integral polarization $\un d$ because
we have an isomorphism ${\rm Jac}(X)_{\phi}\cong {\rm Jac}(X)_{\phi+\psi}$ for any $\psi\in \partial C_1(\Gamma_X,\Z)\subset C_0(\Gamma_X,\Z)$.

\item \label{nota-Jac2}
Given a pair $(\a, \chi)$, where $\chi\in \Z$ and $\a=\{\a_{C_i}\}$ is a polarization such that $|\a|=1$,
consider the polarization $\un q$ defined by
\[\un q_Y=\a_Y\chi+\frac{\deg_Y(\omega_X)}{2},\]
for every subcurve $Y\subset X$. Then the variety $U_X(\un q)$ coincides with the variety
$U_X(\a, \chi)$ in Seshadri's notation (see \cite{ses}).

\item\label{nota-Jac2bis}
Given an ample line bundle $L$ on $X$ and an integer $d\in \Z$, consider the polarization $\un q$ defined by
\[\un q_Y=\frac{\deg_Y(\omega_X)}{2}+\frac{\deg_Y(L)}{\deg(L)}(d-g+1), \]
for every subcurve $Y\subseteq X$.
Then the  Simpson's moduli space (see \cite{simpson}) ${\rm Jac}(X)_{d,L}$
of $S$-equivalence classes of torsion-free, rank one sheaves of degree $d$ that are slope-semistable with respect to $L$ is isomorphic to $U_X(\un q)$
(see \cite{alexeev}). However, note that, contrary to what asserted in \cite[Sec. 2.1]{alexeev}, 
it is not true that every $U_X(\un q)$ with $|\un q|=d$ is isomorphic to ${\rm Jac}(X)_{d,L}$ for some ample line bundle $L$ on $X$. 
%The extreme case happens for $d=g-1$: 
For instance, if $d=g-1$ then it follows easily from the above equation that all the Simpson's compactified Jacobians ${\rm Jac}(X)_{g-1, L}$ are isomorphic among them 
regardless of the chosen $L$ (as observed also in  \cite[Lemma 3.1]{alexeev}), while there many compactified Jacobians of the form $U_X(\un q)$ with $|\un q|=g-1$, 
just as in every other degree $d$! 

\item \label{nota-Jac3} In the particular case where $\un q$ is the canonical polarization of degree $d$ (see \ref{inequ}(\ref{inequ2})), the variety
$U_X(\un q)$ coincides with the variety $\ov{P_X^d}$ in Caporaso's notation (see \cite{caporaso}) and it will be called the \emph{coarse canonical}
degree $d$ compactified Jacobian of $X$. Moreover, we  set $J_X^{d,P}:=J_X^P(\un{q})$ and call it the
\emph{fine canonical} degree $d$ compactified Jacobian of $X$ with respect to $P$. This notation
agrees with the one introduced in \cite[Sec. 2.4]{CCE}. In particular, we have a surjective map
$J_X^{d,P}\twoheadrightarrow \ov{P_X^d}.$
\end{enumerate}
\end{remark}

In what follows, we will need the following  results concerning the smooth loci of $J_X^P(\un q)$ (or $J_X^s(\un q)$ or $J_X^{ss}(\un q)$)
and $U_X(\un q)$.

%(whose proof can be found in \cite{CMKV} and \cite{CMKV2}):

\begin{fact}\label{smooth}
\noindent
\begin{enumerate}[(i)]
\item \label{smooth1} The variety $J_X^P(\un q)$ (resp. $ J_X^s(\un q)$, resp. $ J_X^{ss}(\un q)$) is smooth at $\I$ if and only if $\I$
is a line bundle on $X$.
\item \label{smooth2} The variety $U_X(\un q)$ is smooth at a polystable sheaf $\I$  if and only if $\I$ is locally free at all non-separating nodes of $X$.
\end{enumerate}
\end{fact}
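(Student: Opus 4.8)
The plan is to treat the two parts separately, each time reducing the smoothness question to a computation of the dimension of the tangent space of the relevant moduli functor at the point $\I$, which by deformation theory is governed by $\Ext^1_X(\I,\I)$, while the obstructions live in $\Ext^2_X(\I,\I)$; since $X$ is a curve and $\I$ is torsion-free of rank $1$ (hence with homological dimension $\le 1$ at each point), $\Ext^2$ vanishes, so every deformation functor in sight is automatically smooth over $k$ and the only issue is computing $\dim_k \Ext^1_X(\I,\I)$ and comparing it with the dimension of the variety at that point. Since $J_X^P(\un q)$ (and likewise $J_X^s(\un q)$, $J_X^{ss}(\un q)$) is a fine moduli space parametrizing simple sheaves up to the equivalence $\sim$ of \ref{finecoarse}, its tangent space at $[\I]$ is $\Ext^1_X(\I,\I)/(\text{scalars are already killed by simplicity})$, and in fact equals $\Ext^1_X(\I,\I)$ because $\Hom_X(\I,\I)=k$ forces the trace-zero piece to have dimension $\dim \Ext^1_X(\I,\I)$; meanwhile each of these varieties is pure of dimension $g$ (its generic point is a line bundle and $\Pic^d X$ has dimension $g$), so $J_X^P(\un q)$ is smooth at $\I$ if and only if $\dim_k \Ext^1_X(\I,\I)=g$.

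For part \eqref{smooth1} I would then invoke the standard local-to-global computation of $\Ext^1_X(\I,\I)$ for a torsion-free rank-$1$ sheaf on a nodal curve: writing $n=|\NF(\I)|$ for the number of nodes where $\I$ fails to be locally free, one has $\dim_k \Ext^1_X(\I,\I)=g+n$ (the local contribution $\dim_k \Ext^1_{\O_{X,x}}(\I_x,\I_x)$ is $0$ when $\I$ is free at $x$ and $2$ when $\I_x\cong\mathfrak m_x$, and the global $H^1$ term together with the $H^0(\mathcal{E}xt^0)=\End(\I)=k$ term contributes $g$ when $\I$ is simple; see the computation in e.g.\ the references to Esteves and Alexeev cited above). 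Hence $\dim_k \Ext^1_X(\I,\I)=g$ exactly when $n=0$, i.e.\ exactly when $\I$ is a line bundle on $X$. The same argument applies verbatim to $J_X^s(\un q)$ and $J_X^{ss}(\un q)$, since these also represent (open sub)functors of simple sheaves with the same deformation theory and the same dimension $g$; alternatively one notes that $J_X^s(\un q)$ is open in $J_X^P(\un q)$ and that $J_X^{ss}(\un q)$, though non-separated, is still covered by étale-locally fine moduli of simple sheaves, so smoothness is a pointwise statement transported along these étale maps.

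For part \eqref{smooth2}, the coarse space $U_X(\un q)$ is only a GIT quotient, so one cannot argue directly through a universal family; instead I would use the étale-local structure of $U_X(\un q)$ at a polystable sheaf $\I$. By Luna's slice theorem the completed local ring of $U_X(\un q)$ at $[\I]$ is the completion of the invariant ring of $\Ext^1_X(\I,\I)$ under the action of the stabilizer $\Aut(\I)$, and for a polystable $\I=\bigoplus_Z \Gr(\I)_Z$ this automorphism group is a torus $\Gm^{r}$ where $r+1=|\SS(\I)|$; the weights of this torus on $\Ext^1_X(\I,\I)$ can be read off from the local types at the nodes of $X$. A node $x$ at which $\I$ is not locally free contributes a $2$-dimensional local $\Ext^1$ on which the torus acts with a pair of opposite nonzero weights precisely when $x$ lies on (or between) two distinct summands $Z$, i.e.\ exactly when $x$ is a \emph{non-separating} node of $X$ (a separating node either does not appear in $\NF(\I)$ for a polystable sheaf, or appears with the torus acting trivially); in the non-separating case the invariant ring of that $\Gm$-factor acting on $k^2$ with opposite weights is $k[xy]$, a smooth one-dimensional ring, and these pieces are absorbed without creating singularities, whereas the remaining singularities of $U_X(\un q)$ come exactly from nodes where $\I$ fails to be locally free and the torus does act, i.e.\ the non-separating ones. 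Carefully, the statement is the contrapositive: $U_X(\un q)$ is singular at $\I$ iff $\I$ is not locally free at some non-separating node; equivalently it is smooth at $\I$ iff $\I$ is locally free at every non-separating node. I would cite the relevant computations (this is well known for $\ov{P_X^d}$, and appears in the work of Caporaso and of Casalaina-Martin--Kass--Viviani on singularities of compactified Jacobians) rather than redo the weight bookkeeping in full.

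The main obstacle is part \eqref{smooth2}: unlike the fine case, it genuinely requires controlling the local GIT structure — the torus action on $\Ext^1_X(\I,\I)$ and the distinction between separating and non-separating nodes in how automorphisms act — so the work lies in identifying the correct étale-local model and reading off which local pieces are smooth invariant-theoretically. Part \eqref{smooth1}, by contrast, is a clean dimension count once one has the formula $\dim_k\Ext^1_X(\I,\I)=g+|\NF(\I)|$, which follows from the local-global spectral sequence together with simplicity of $\I$.
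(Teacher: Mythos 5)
Your overall strategy coincides with the paper's: identify the completed local ring of the fine moduli space at $\I$ with the miniversal deformation ring of $\I$ and read off smoothness from the local deformation theory of torsion-free rank-$1$ sheaves on a nodal curve. The paper simply quotes the relevant computations from \cite{CMKV2} (Lemma 3.14 for part (i), Theorem B(ii) for part (ii)), which is also where the weight bookkeeping you sketch for part (ii) is actually carried out; so in substance both proofs rest on the same reference.

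There is, however, a genuine error at the start of your argument: the claim that $\I$ has homological dimension $\le 1$ at each point, hence $\Ext^2_X(\I,\I)=0$, hence ``every deformation functor in sight is automatically smooth over $k$''. At a node $x\in \NF(\I)$ the local ring $\O_{X,x}\cong k[[u,v]]/(uv)$ is not regular and $\I_x\cong\mathfrak{m}_x$ has infinite projective dimension (its minimal free resolution is $2$-periodic), so $\Ext^2$ does not vanish and the deformation functor of $\I$ is obstructed. If it were unobstructed, the miniversal deformation ring would be a power series ring and $J_X^P(\un q)$ would be smooth at \emph{every} $\I$ --- contradicting the very statement you are proving. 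The correct local model is $k[[t_1,\dots,t_{g-n}]]\,\widehat{\otimes}\,\bigotimes_{i=1}^{n}k[[X_i,Y_i]]/(X_iY_i)$ with $n=|\NF(\I)|$. Your operative argument for (i) --- comparing $\dim\Ext^1_X(\I,\I)=g+n$ with $\dim_{\I}J_X^P(\un q)=g$ --- survives this correction, since smoothness at a point is equivalent to the Zariski tangent space having the local dimension, with no appeal to unobstructedness; but the $\Ext^2$ claim should be deleted. For (ii), your Luna-slice picture is the right framework, but the separating/non-separating bookkeeping is inverted: it is at the \emph{separating} nodes of $\NF(\I)$ that a polystable sheaf necessarily splits as a direct sum over the two tails and the stabilizer torus acts with opposite nonzero weights on the corresponding nodal factor, whose $\Gm$-invariant quotient $\left(k[[X,Y]]/(XY)\right)^{\Gm}=k$ is smooth; a non-separating node of $\NF(\I)$ either lies inside a single stable summand (no torus acts and the nodal factor survives into $U_X(\un q)$) or sits between summands meeting at further nodes (the invariant subring remains singular). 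As written, your criterion ``the torus acts with opposite weights exactly at non-separating nodes'' would produce the wrong answer; since you ultimately defer to the literature for this computation, as does the paper, the flaw is in the sketch rather than in the citation, but it should be fixed before the sketch could stand as a proof.
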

For the proof of part \eqref{smooth1}, observe that, since $J_X^P(X)$ is a fine compactified Jacobian, the completion of the local ring of $J_X^P(\un q)$ at  $\I$ is isomorphic
to the miniversal deformation ring of $\I$. The same thing is true for $ J_X^s(\un q)$, resp. $ J_X^{ss}(\un q)$. The result then follows from \cite[Lemma 3.14]{CMKV2}.
Part \eqref{smooth2} follows from \cite[Thm. B(ii)]{CMKV2}.

\vspace{0,2cm}

Now fix a one-parameter   regular local smoothing $f:\X\to B=\Spec(R)$ of $X$ (see \ref{notner}).

It follows from \cite{Ish} that there exists a $B$-scheme $U_f(\un q)$ whose special fiber is isomorphic to
$U_X(\un q)$ and whose general fiber is isomorphic to  $\Pic^{|\un q|}(\X_K)$.
Denote by $U_f^s(\un q)$ the open subset of $U_f(\un q)$ whose special fiber is isomorphic to $U_X^s(\un q)\subset U_X(\un q)$ 
and whose general fiber is isomorphic to  $\Pic^{|\un q|}(\X_K)$.

Note that, since $R$ is assumed to be Henselian, for any $P\in X_{\rm sm}$ there exists a section $\sigma: B \to \X$ of $f$ such that $\sigma(\Spec k)=P$ (see e.g. \cite
[Prop. 14]{BLR}).
Conversely, every section $\sigma$ of $f$ is such that  $\sigma(\Spec k)$ is a smooth point of $\X_k=X$ (see e.g. \cite[Chap. 9, Cor. 1.32]{Liu}).
Fix now a section $\sigma$ of $f$ and let  $P:=\sigma(\Spec k)\in X_{\rm sm}$. 
Then, according to \cite[Thm. A and Thm. B]{est1}, there exist $B$-schemes $J_f^s(\un q)$, $J_f^{\sigma}(\un q)$ and $J_f^{ss}(\un q)$ together with open inclusions
\[J_f^s(\un q)\subset J_f^{\sigma}(\un q)\subset J_f^{ss}(\un q),\]
such that the general fibers over $B$  of the above schemes is $\Pic^{|\un q|}(\X_K)$ while the special fibers
are isomorphic to, respectively,  $J_X^s(\un q)$, $J_X^P(\un q)$ and $J_X^{ss}(\un q)$.
The above diagram (\ref{Smap-dia}) becomes the special fiber of the following diagram of $B$-schemes
\begin{equation}\label{Smap-fam}
\xymatrix{
J_f^s(\un q) \ar@{^{(}->}[r]\ar[d]^{\Phi_f^s}_{\cong} & J_f^{\sigma}(\un q) \ar@{->>}[d]^{\Phi_f}\ar@{^{(}->}[r] & J_f^{ss}(\un q) \ar@{->>}[dl]^{\Phi_f^{ss}} \\
U_f^s(\un q) \ar@{^{(}->}[r] & U_f(\un q) &
}
\end{equation}

\end{nota}

\section{Graph-theoretic results}

\begin{nota}{\emph{Notations.}}
Let $\Gamma$ be a finite graph with vertex set $V(\Gamma)$ and edge set $E(\Gamma)$.
We allow loops or multiple edges, although, in what follows, loops will play no role,
i.e. we could consider the graph $\w{\Gamma}$ obtained from $\Gamma$ by removing all the loops
and obtain exactly the same answers we get for $\Gamma$.

We will be interested in two kinds of \emph{subgraphs} of $\Gamma$:
\begin{itemize}
\item Given a subset $T\subset E(\Gamma)$, we denote by   $\Gamma\setminus T$ the
subgraph of $\Gamma$ obtained from $\Gamma$ by deleting the edges belonging to $T$.
Thus we have that $V(\Gamma\setminus T)=V(\Gamma)$ and $E(\Gamma\setminus T)=E(\Gamma)\setminus T$.
The subgraphs of the form $\Gamma\setminus T$ are called complete subgraphs.
\item Given a subset $W\subset V(\Gamma)$, we denote by   $\Gamma[W]$ the subgraph
whose vertex set is $W$ and whose edges are all the edges of $\Gamma$ that join two vertices in $W$.
The subgraphs of the form $\Gamma[W]$ are called induced subgraphs and we say that
$\Gamma[W]$ is induced from $W$.
\end{itemize}
If $W_1$ and $W_2$ are two disjoint subsets of $V(\Gamma)$, then we set
$\val(W_1,W_2):=$ $|E(\Gamma[W_1],$ $\Gamma[W_2])|$,
where $E(\Gamma[W_1], \Gamma[W_2])$ is the subset of $E(\Gamma)$ consisting of all the edges
of $\Gamma$ that join some vertex of $W_1$ with some vertex of $W_2$. We call
$\val(W_1,W_2)$ the \emph{valence} of the pair $(W_1,W_2)$.
For a subset $W\subset V(\Gamma)$, we denote by   $W^c:=V(\Gamma)\setminus W$ its
complementary subset. We set $\val(W)=\val(W^c):=\val(W,W^c)$ and call it the
valence of $W$. In particular
$\val(\emptyset)=\val(V(\Gamma))=0$. Note that for $w\in V(\Gamma)$, the valence
$\val(w)$ is the number of edges joining $w$ with a vertex of $\Gamma$ different from $w$
i.e. loops are not taken into account
%do not play any role
in our definition of valence.

Given a subset $S\subseteq E(\Gamma)$, we define the valence of the pair $(W_1,W_2)$
of disjoint subsets $W_1,W_2\subset V(\Gamma)$ with respect to $S$ to be $\val_S(W_1,W_2):=|S\cap E(\Gamma[W_1],\Gamma[W_2])|$.
Obviously, we always have that $\val_S(W_1,W_2)\leq \val(W_1,W_2)$ with equality if $S=E(\Gamma)$.

Note that the valence is additive: if $W_1, W_2, W_3$ are pairwise disjoint subsets
of $V(\Gamma)$, we have that
\begin{equation}\label{add-val}
\val(W_1\cup W_2,W_3)=\val(W_1,W_3)+\val(W_2,W_3).
\end{equation}
A similar property holds for $\val_S$.
\end{nota}

\begin{nota}{\emph{$0$-cochains.}}
Given an abelian group $A$ (usually $A=\Z, \Q$), we define the space $C^0(\Gamma,A)$
of $0$-cochains with values in $A$ as the free $A$-module $A^{V(\Gamma)}$ of functions from
$V(\Gamma)$ to $A$. If $\un{d}\in C^0(\Gamma,A)$, we set
\[\begin{sis}
&\un{d}_v:= \un{d}(v)\in A \text{ for any } v\in V(\Gamma),\\
&\un{d}_W:=\sum_{w\in W} \un{d}_w \in A \text{ for any } W\subseteq V(\Gamma),\\
&|\un{d}|:=\un{d}_{V(\Gamma)}\in A.
\end{sis}\]
For any element $a\in A$, we set
\[C^0(\Gamma,A)_a:=\{\un{d}\in C^0(\Gamma,A)\: :\: |\un{d}|=a\}\subseteq
C^0(\Gamma,A).
\]

%In the following, we will consider tuples of rational number $\un{d}=\{d_w\}_{w\in V(\Gamma)}\in
%\Q^{|V(\Gamma)|}$. We call such a tuple $\un{d}$ a rational multidegree on $\Gamma$;
%if moreover $\un{d}\in \Z^{|V(\Gamma)|}$ then $\un{d}$ is said to be
%an integral multidegree, or simply a multidegree.
%We set $d:=|\un{d}|:=\sum_w d_w$ and $d_W:=\sum_{w\in V(\Gamma)} d_w$ for
%any subset $W\subset V(\Gamma)$. The set of all the multidegrees $\un{d}$ on $\Gamma$
%(resp. such that $|\un{d}|=e$) is denoted by $Z_{\Gamma}$ (resp. $Z_{\Gamma}^e$).
%If $W=V(\Gamma)$ the we set $d_{V(\Gamma)}:=|\un{d}|:=d$.

Given a subset $W\subset V(\Gamma)$, we will denote by   $\un{\chi(W)}\in C^0(\Gamma,\Z)$ the
characteristic function of $W$, i.e. the element of $C^0(\Gamma,\Z)$ uniquely
defined by
\begin{equation}\label{char-fun}
\un{\chi(W)}_v=
\begin{cases}
1 & \text{ if } v\in W,\\
0 & \text{otherwise}.
\end{cases}
\end{equation}
%The space of $0$-cochains $C^0(\Gamma,A)$ is endowed with an $A$-bilinear symmetric
%form $C^0(\Gamma,A)\times C^0(\Gamma,A)   \to A$,
%%(\un{d}, \un{d'})& \mapsto \un{d}\cdot \un{d'}.
%defined by sending a pair $(\un{d}, \un{d'})$ of elements of $C^0(\Gamma,A)$
%into the element of $A$ defined by
%\begin{equation}\label{form-C^0}
%\un{d}\cdot \un{d'}:=\sum_{v\neq w\in V(\Gamma)} \un{d}_v \un{d'}_w
%\val(v,w)-\sum_{v\in V(\Gamma)}\un{d}_v \un{d'}_v \val{v}.
%\end{equation}
%Note that, if $W_1, W_2$ are disjoint subsets of $V(\Gamma)$, then
%\begin{equation}\label{form-cha}
%\un{\chi_{W_1}}\cdot \un{\chi_{W_2}}=\val(W_1,W_2).
%\end{equation}

The space of $0$-cochains with values in $A$ is endowed with an endomorphism,
called Laplacian and denoted by $\Delta_0$ (see for example \cite[Pag. 169]{BdlHN}),
defined as
\begin{equation}\label{Lapla}
\Delta_0(\un{d})_v:=-\un{d}_v \val(v) +\sum_{w\neq v} \un{d}_w \val(v,w).
\end{equation}
It is easy to check that $\Im(\Delta_0)\subset C^0(\Gamma,A)_0$. In the case where $A=\Z$ and
$\Gamma$ is connected, the kernel $\ker(\Delta_0)$ consists of the constant $0$-cochains
and therefore the quotient
\[\Pic(\Gamma):=\frac{C^0(\Gamma,\Z)_0}{\im(\Delta_0)}\]
is a finite group, called the Jacobian group (see \cite{BdlHN}).
%The cardinality of
%$\Pic(\Gamma)$ is known (see for example \cite{BdlHN})
%to be equal to the complexity $c(\Gamma)$ of $\Gamma$, which is the number of
%spanning subtrees of $\Gamma$, i.e. of spanning subgraphs $\Gamma\setminus S$ of
%$\Gamma$ that are (connected) trees. Note that $c(\Gamma)>0$ if and only if $\Gamma$
%is connected.

For any $d\in \Z$, the set $C^0(\Gamma,\Z)_d$ is clearly a torsor for the group $C^0(\Gamma,\Z)_0$.
%via pointwise action.
Therefore, the subgroup $\im(\Delta_0)$ acts on the
sets $C^0(\Gamma,\Z)_d$ and
\begin{equation}\label{lat-class0}
|\Pic(\Gamma)|=\left|\frac{C^0(\Gamma,\Z)_d}{\im(\Delta_0)}\right|.
\end{equation}

\begin{remark}\label{dcg}
Let $X$ be a connected nodal curve and consider the dual graph of $X$, $\Gamma_X$.
Then $\Gamma_X$ is connected and it is easy to check that
$\Pic(\Gamma_X)\cong\dcg$ (see \ref{nota-group}).
Moreover, for any $d\in \Z$, there is a bijection $\frac{C^0(\Gamma_X,\Z)_d}{\im(\Delta_0)}\leftrightarrow \dcg^d$. In particular,  we have that:
\begin{equation}\label{lat-class}
c(\Gamma_X)=\left|\frac{C^0(\Gamma_X,\Z)_d}{\im(\Delta_0)}\right|.
\end{equation}
\end{remark}

For later use, we record the following formula
(for any $W, V\subseteq V(\Gamma)$):
\[\Delta_0(\un{\chi(V)})_W=\sum_{w\in W}\left[-\un{\chi(V)}_w \val(w) +
\sum_{v\neq w}\un{\chi(V)}_v \val(v,w)\right]=\]
\[=\sum_{w\in V\cap W}\left[-\val(w) + \sum_{w\neq v\in V} \val(v,w)\right]+
\sum_{w\in W\setminus V}\sum_{v\in V} \val(v,w)=\]
\[=\sum_{w\in V\cap W}\left[-\sum_{v\in V^c} \val(v,w)\right]+
\sum_{w\in W\setminus V}\sum_{v\in V} \val(v,w)\]
\[=-\val(V\cap W, V^c)+\val(W\setminus V, V)=\]
\[=-\val(V\cap W, W\setminus V)-\val (V\cap W, (V\cup W)^c)+\val(W\setminus V, V\cap W)+\val(W\setminus V,V\setminus W )= \]
\begin{equation}\label{for-cha}
=-\val (V\cap W, (V\cup W)^c)+\val(W\setminus V,V\setminus W ).
\end{equation}

\end{nota}

\begin{nota}{\emph{Quasistable $0$-cochains $\Bqs$}}

Throughout this subsection, we fix the following data:
\begin{enumerate}
\item A finite graph $\Gamma$;
\item $v_0\in V(\Gamma)$;
\item $S\subset E(\Gamma)$;
\item $\un{q}\in C^0(\Gamma,\Q)$ such that $q:=|\un{q}|\in \Z$.
\end{enumerate}
Since we will be using two different graphs throughout this section, $\Gamma$ and $\Gamma\setminus S$,
we will adopt the following convention on the notation used.
Given two disjoint subsets $W_1,W_2\subseteq V(\Gamma)=V(\Gamma\setminus S)$, we will be considering three
different notions of valence, namely:
\[\begin{sis}
& \val(W_1,W_2):=|E(\Gamma[W_1],\Gamma[W_2])|,\\
& \val_S(W_1,W_2):=|S\cap E(\Gamma[W_1],\Gamma[W_2])|,\\
& \val_{\Gamma\setminus S}(W_1,W_2):=|E((\Gamma\setminus S)[W_1],(\Gamma\setminus S)[W_2])|.
\end{sis}\]
Note that $\val(W_1,W_2)=\val_S(W_1,W_2)+\val_{\Gamma\setminus S}(W_1,W_2)$. As usual,
we set $\val(W):=\val(W,W^c)$ and similarly for $\val_S$ and $\val_{\Gamma\setminus S}$.
%However, the bilinear form (\ref{form-C^0}) and the Laplacian $\Delta_0$ of (\ref{Lapla}) will be
%considered only for $\Gamma\setminus S$.

We now introduce the main characters of this subsection.

\begin{defi}\label{def-ss-qs}\noindent
\begin{enumerate}[(i)]
\item A $0$-cochain $\un{d}\in C^0(\Gamma,\Z)$  is
said to be semistable on $\Gamma\setminus S$ with respect to $\un{q}$ if the following two
conditions are satisfied:
\begin{enumerate}\label{ss-cond}
\item\label{ss-cond1} $|\un{d}|=q-|S|$;
\item\label{ss-cond2} $\un{d}_W+|S\cap E(\Gamma[W])| \geq \un{q}_W -\frac{\val(W)}{2}$ for any proper subset $W\subset V(\Gamma)$.
\end{enumerate}
We denote the set of all such $0$-cochains by
%$\un{d}$ that are semistable on $\Gamma\setminus S$ with respect to $\un{q}$ by
$\Bss$.
\item \label{qs-cond} A $0$-cochain $\un{d}\in C^0(\Gamma,\Z)$ is
said to be $v_0$-quasistable on $\Gamma\setminus S$ with respect to $\un{q}$
if $\un{d}\in \Bss$ and the inequality in (\ref{ss-cond2}) above is strict when $v_0\in W$.
We denote the set of all such $0$-cochains by $\Bqs$.
\end{enumerate}
\end{defi}

\begin{remark}\label{sym-inequ}
Let $\un{d}\in \Bss$ and $W$ a proper subset of $V(\Gamma)$.
By applying the condition (\ref{ss-cond2}) of Definition \ref{def-ss-qs} to $W^c\subset V(\Gamma)$ and using
(\ref{ss-cond1}), we get that
\begin{equation*}
\un{q}_W-\frac{\val(W)}{2}+\val_{\Gamma\setminus S}(W)=\un{q}_W+\frac{\val(W)}{2}-\val_S(W)
\geq \un{d}_W+|S\cap E(\Gamma[W])|.
\end{equation*}
If moreover $\un{d}\in \Bqs$ then the above inequality is strict if $v_0\not\in W$.
\end{remark}

We want to determine the cardinality of the set $\Bqs$. We begin with the following necessary condition
in order that $\Bqs$ is not empty. Later (see Corollary~\ref{cor-card}),
we will see that it is also a sufficient  condition.

\begin{lemma}\label{empty}
If $\Bqs\neq\emptyset$ then $\Gamma\setminus S$ is connected.
\end{lemma}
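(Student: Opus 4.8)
The plan is to argue by contrapositive: assuming $\Gamma\setminus S$ is disconnected, I will produce a proper subset $W\subset V(\Gamma)$ for which the $v_0$-quasistability inequality in Definition~\ref{def-ss-qs}(\ref{ss-cond2}) and its symmetric companion from Remark~\ref{sym-inequ} cannot be simultaneously satisfied, forcing $\Bqs=\emptyset$. Concretely, if $\Gamma\setminus S$ is not connected, write $V(\Gamma)=V(\Gamma\setminus S)$ as a disjoint union of two nonempty subsets $W$ and $W^c$ such that no edge of $\Gamma\setminus S$ joins $W$ to $W^c$; equivalently, every edge of $\Gamma$ between $W$ and $W^c$ lies in $S$, i.e. $\val_{\Gamma\setminus S}(W)=0$ and hence $\val(W)=\val_S(W)$. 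We may arrange that $v_0\in W$ (swap $W$ and $W^c$ if necessary), though I will actually want to play both $W$ and $W^c$ against each other.

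\textbf{Key steps.} First I would record the two inequalities coming from a putative $\un d\in\Bqs$. Applying (\ref{ss-cond2}) to $W$ (which contains $v_0$, so the inequality is strict) gives
\[
\un d_W+|S\cap E(\Gamma[W])| > \un q_W-\frac{\val(W)}{2}.
\]
Applying (\ref{ss-cond2}) to $W^c$ and rewriting via (\ref{ss-cond1}) exactly as in Remark~\ref{sym-inequ} gives
\[
\un d_W+|S\cap E(\Gamma[W])| \leq \un q_W-\frac{\val(W)}{2}+\val_{\Gamma\setminus S}(W).
\]
Now I substitute $\val_{\Gamma\setminus S}(W)=0$, which is the defining property of the disconnection. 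The second inequality becomes $\un d_W+|S\cap E(\Gamma[W])|\leq \un q_W-\frac{\val(W)}{2}$, directly contradicting the first (strict) inequality. Hence no such $\un d$ exists, i.e. $\Bqs=\emptyset$, which is the contrapositive of the claim.

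\textbf{Main obstacle.} There is essentially no deep obstacle here; the lemma is a bookkeeping consequence of combining the subset inequality with its complement. The only point requiring a little care is the identity $\val(W_1,W_2)=\val_S(W_1,W_2)+\val_{\Gamma\setminus S}(W_1,W_2)$ together with the observation that $|S\cap E(\Gamma[W])|$ appears with the \emph{same} sign on both sides of the comparison (it is the count of $S$-edges internal to $W$, not across the cut), so it cancels cleanly and does not interfere. One should also make sure the disconnection can be split into exactly two parts $W,W^c$ with $W$ proper and nonempty so that both instances of (\ref{ss-cond2}) are legitimately applicable (this is immediate: if $\Gamma\setminus S$ has $\geq 2$ connected components, take $W$ to be the union of the components not containing $v_0$, or if $v_0$ is alone, take $W$ to be its component's complement — either way $W$ and $W^c$ are nonempty proper subsets and $v_0$ lies in one of them, and we choose the labeling so that the strict inequality lands where we need it). With that, the two displayed inequalities are contradictory and the proof is complete.
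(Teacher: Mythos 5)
Your proof is correct and follows essentially the same route as the paper's: both take a proper subset $W$ with $\val_{\Gamma\setminus S}(W)=0$ coming from the disconnection, apply Definition \ref{def-ss-qs}(\ref{ss-cond2}) to $W$ and (via Remark \ref{sym-inequ}) to $W^c$, and derive a contradiction because one of the two inequalities must be strict depending on which side contains $v_0$. Your choice to fix $v_0\in W$ (contrapositive phrasing) versus the paper's symmetric contradiction phrasing is an immaterial difference.
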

\begin{proof}
By contradiction, assume that $\Gamma\setminus S$ is not connected and $\Bqs\neq \emptyset$.
This means that there exist $\un{d}\in \Bqs$ and
a proper subset $W\subset V(\Gamma)$ such that $\val_{\Gamma\setminus S}(W)=0$.
By the Definition \ref{def-ss-qs} and Remark \ref{sym-inequ}, we get that
\[\un{q}_W -\frac{\val(W)}{2}\leq \un{d}_W+|S\cap E(\Gamma(W))|\leq \un{q}_W -\frac{\val(W)}{2} +
\val_{\Gamma\setminus S}(W)= \un{q}_W -\frac{\val(W)}{2}.
\]
This contradicts the fact that one of the above two inequalities
must be strict, according to whether $v_0\in W$ or $v_0\in W^c$.
\end{proof}

%Therefore, in view of the above Lemma, we can concentrate on the case where $\Gamma\setminus S$
%is connected.
In what follows, we are going to consider the $0$-cochains $C^0(\Gamma\setminus S,\Z)$
endowed with
%the bilinear form (\ref{form-C^0}) and the
the Laplacian operator $\Delta_0$ as in (\ref{Lapla}) with respect to $\Gamma \setminus S$.
Note that, although $C^0(\Gamma\setminus S,\Z)=C^0(\Gamma,\Z)$
is independent of the chosen $S\subset E(\Gamma)$,
%both the bilinear form (\ref{form-C^0})and
the Laplacian $\Delta_0$ depends on $S$.

\begin{prop}\label{card-qs}
If $\Gamma \setminus S$ is connected, then the composed map
\[\pi:\Bqs\subseteq C^0(\Gamma\setminus S,\Z)_{q-|S|} \twoheadrightarrow
\frac{C^0(\Gamma\setminus S,\Z)_{q-|S|}}{\im(\Delta_0)}.\]
is bijective.
\end{prop}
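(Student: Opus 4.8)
The claim is that the natural projection $\pi$ restricts to a bijection from the set $\Bqs$ of $v_0$-quasistable $0$-cochains onto the orbit space $C^0(\Gamma\setminus S,\Z)_{q-|S|}/\im(\Delta_0)$. The plan is to prove injectivity and surjectivity separately, and the structure parallels the classical proofs (due to Caporaso, Oda--Seshadri) that in each degree-class there is exactly one quasistable multidegree. Throughout, $\val$, $\val_S$, $\val_{\Gamma\setminus S}$, $\Delta_0$ are all taken relative to $\Gamma\setminus S$ as stipulated, and I will use the displayed formula \eqref{for-cha} for $\Delta_0(\un{\chi(V)})_W$ heavily.

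\emph{Injectivity.} Suppose $\un d, \un d' \in \Bqs$ with $\un d - \un d' \in \im(\Delta_0)$, and assume $\un d \neq \un d'$. Write $\un d' = \un d + \Delta_0(\un f)$ for some $\un f \in C^0(\Gamma\setminus S,\Z)$. Let $V\subseteq V(\Gamma)$ be the set of vertices where $\un f$ attains its maximum value; since $\un f$ is non-constant (as $\un d\neq \un d'$ and $\ker\Delta_0$ is the constants on the connected graph $\Gamma\setminus S$), $V$ is a proper nonempty subset. I would then compute $\un d'_V - \un d_V = \Delta_0(\un f)_V$ and, by splitting vertices of $V^c$ according to the value of $\un f$ and using that edges from $V$ go only to strictly-smaller-$\un f$ vertices, show $\Delta_0(\un f)_V \leq 0$, hence $\un d'_V \leq \un d_V$. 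Applying the quasistability inequality \eqref{ss-cond2} (for $\un d$) and the reverse inequality from Remark~\ref{sym-inequ} (for $\un d'$) to the subset $V$ — or to $V^c$ — and comparing, the slack terms $\val_{\Gamma\setminus S}$ must vanish and every edge-count must be tight. The strictness clause at $v_0$ (which lies in exactly one of $V$, $V^c$) then forces a contradiction, exactly as in Lemma~\ref{empty}. One has to be slightly careful: if $\Delta_0(\un f)_V = 0$ one repeats the argument with the \emph{second}-largest value of $\un f$, or equivalently runs an induction on the number of distinct values of $\un f$; this bookkeeping is the only delicate point of injectivity.

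\emph{Surjectivity.} Given any class in $C^0(\Gamma\setminus S,\Z)_{q-|S|}/\im(\Delta_0)$, pick a representative $\un d$ with $|\un d| = q - |S|$. I want to modify $\un d$ within its $\im(\Delta_0)$-orbit to land in $\Bqs$. The standard device: among all cochains $\un e \equiv \un d$, the quantity $\sum_v$ of some potential is minimized, or better, one iteratively applies the move $\un e \mapsto \un e + \Delta_0(\un{\chi(W)})$ for a well-chosen destabilizing subset $W$ and shows the process terminates. Concretely, suppose $\un d$ violates \eqref{ss-cond2} strictly at some $W$ (take $W$ maximal among the most-violating ones, to control subtractivity), or violates the strict-at-$v_0$ clause; using \eqref{for-cha} with $V = W^c$ (so $v_0\notin$ the shifted set, or the appropriate choice), replacing $\un d$ by $\un d + \Delta_0(\un{\chi(W^c)})$ strictly decreases a nonnegative integer-valued "defect" functional such as $\sum_{W'}\max\{0, \un q_{W'} - \val(W')/2 - \un d_{W'} - |S\cap E(\Gamma[W'])|\}$ summed appropriately, or more cleanly $-\sum_v \min(\un d_v, 0)$-type quantities after a normalization; the key computation is that the inequality at $W'$ changes by exactly $-\val(W^c\cap W', W'\setminus W^c)+\val(W\setminus W', W'\cap W^c)$-style terms from \eqref{for-cha}, which are controlled by the additivity \eqref{add-val}. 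I expect the cleanest route is to mimic \cite[Prop. 4.12 / Sec. 1]{alexeev} or Caporaso's existence argument for quasistable multidegrees verbatim, now relativized to $\Gamma\setminus S$ with the extra additive constant $|S\cap E(\Gamma[W])|$ absorbed into a modified polarization.

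\textbf{Main obstacle.} The substantive work is surjectivity: setting up a potential function on the $\im(\Delta_0)$-orbit that is bounded below, integer-valued, and strictly decreased by a canonical "reduction move" at a destabilizing subcurve, so that the process terminates at an element of $\Bqs$ — and checking that the terminal element satisfies not just semistability but the sharp $v_0$-strictness. Injectivity, by contrast, is a direct maximum-principle argument using \eqref{for-cha} and is essentially bookkeeping. A tempting shortcut is to invoke that $|\Bqs| \le |C^0(\Gamma\setminus S,\Z)_{q-|S|}/\im(\Delta_0)| = c(\Gamma\setminus S)$ from injectivity plus a known lower bound on $|\Bqs|$ (e.g. from the fact that the fine compactified Jacobian $J_{X_S}^P(\un{q^S})$ has $c(\Gamma_{X_S})$ components, Theorem~\ref{picner}) to get surjectivity for free — but since this Proposition is in Section~2 and feeds \emph{into} Theorem~\ref{picner}, that would be circular, so a self-contained termination argument is needed.
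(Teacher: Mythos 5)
Your injectivity argument is essentially the paper's Step I: a maximum/minimum principle applied to the extremal level set of the potential $\un f$ with $\Delta_0(\un f)=\un d'-\un d$, played off against the two-sided bounds coming from Definition \ref{def-ss-qs}\eqref{ss-cond2} and Remark \ref{sym-inequ}, where the strictness at $v_0$ (which lies in $V$ or $V^c$) gives the contradiction. One remark: no induction on the second-largest value is needed — integrality gives $\un f_w\le M-1$ off the maximal set $V$, hence $\Delta_0(\un f)_V\le -\val_{\Gamma\setminus S}(V)$, which already contradicts the strict inequality $\un d'_V-\un d_V>-\val_{\Gamma\setminus S}(V)$ valid for every proper subset.

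Surjectivity, however, is where the proposition lives, and your proposal stops at the level of a plan ("mimic Caporaso/Alexeev, relativized to $\Gamma\setminus S$") without supplying the two ingredients that actually make it work. First, the reduction to semistability: the paper introduces $\epsilon(\un d,W)$ and $\eta(\un d,W)$, proves the additivity formula \eqref{add-for}, shows the set of maximizers $S^+_{\un d}$ is closed under intersection (this is exactly where \eqref{add-for} is used), and moves by $\Delta_0(\un{\chi(\Omega^+(\un d))})$ where $\Omega^+(\un d)$ is the \emph{minimal} maximizer. Crucially, a single such move need not strictly decrease the defect $\epsilon$: the correct statement is a dichotomy — either $\epsilon$ drops, or $\epsilon$ is unchanged and $\Omega^+$ strictly grows — and termination follows from this lexicographic descent together with $N\epsilon\in\Z$. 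So the functional you propose ("strictly decreases a nonnegative integer-valued defect") fails as literally stated and needs this secondary induction. Second, and more importantly, reaching $\Bss$ is not the end: the passage from a semistable representative to a $v_0$-quasistable one is a separate iteration (the paper's Step III), moving by $-\Delta_0(\un{\chi(\Omega^-(\un d,v_0))})$ where $\Omega^-(\un d,v_0)$ is the minimal \emph{tight} set containing $v_0$; one must check the move preserves $\eta=0$ and that connectedness of $\Gamma\setminus S$ forces $\Omega^-(\cdot,v_0)$ to grow strictly, so that eventually $\Omega^-(\un d,v_0)=V(\Gamma)$, which is precisely the $v_0$-strictness. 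You flag this last point as the main obstacle but give no mechanism for it, so the proof of surjectivity is genuinely incomplete as written. (Your observation that invoking Theorem \ref{picner} would be circular is correct, and the paper indeed avoids it.)
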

\begin{proof}
%To simplify the notations, we are going to set $\Gamma:=\Gamma\setminus S$
%throughout this proof.
Consider the auxiliary map
\[\ov{\pi}:\Bss\subseteq C^0(\Gamma\setminus S,\Z)_{q-|S|} \twoheadrightarrow
\frac{C^0(\Gamma\setminus S,\Z)_{q-|S|}}{\im(\Delta_0)}.\]
Clearly we have that $\pi=\ov{\pi}_{|\Bqs}$.
We divide the proof in three steps.

\noindent \underline{STEP I: $\pi$ is injective.}

By contradiction, assume that there exist $\un{d}\neq \un{e}\in \Bqs$
such that $\pi(\un{d})=\pi(\un{e})$. This is equivalent to the existence
of an element $\un{t}\in C^0(\Gamma\setminus S, \Z)$ such that
$\Delta_0(\un{t})=\un{d}-\un{e}$.
Since $\un{d}, \un{e}\in \Bqs$, by Definition \ref{def-ss-qs} and Remark \ref{sym-inequ},
we get that for any proper subset $W\subset V(\Gamma)$:
\[\un{d}_W-\un{e}_W < \left(\un{q}_W+\frac{\val(W)}{2}-\val_S(W)\right)-
\left(\un{q}_W-\frac{\val(W)}{2}\right)=\]
\begin{equation}\label{eq1}
=\val(W)-\val_S(W)=\val_{\Gamma\setminus S}(W),
\end{equation}
where the inequality is strict since either $v_0\in W$ or $v_0\in W^c$.

Consider now the (non-empty) subset
\begin{equation*}
V_0:=\{v\in V(\Gamma)=V(\Gamma\setminus S)\: :\: \un{t}_v=\min_{w\in V(\Gamma)}
\un{t}_w:=l\}\subseteq V(\Gamma)=V(\Gamma\setminus S).
\end{equation*}
If $V_0=V(\Gamma\setminus S)$ then $\un{t}$ is a constant $0$-cochain
in $\Gamma\setminus S$, and therefore $0=\Delta_0(\un{t})=\un{d}-\un{e}$,
which contradicts the hypothesis that $\un{d}\neq \un{e}$. Therefore
$V_0$ is a proper subset of $V(\Gamma\setminus S)$.

From the definition (\ref{Lapla}), using the additivity of $\val_{\Gamma\setminus S}$
and the fact that $\un{t}_v\geq l$ for any $v\in V(\Gamma\setminus S)$ with equality
if $v\in V_0$, we get
\[\Delta_0(\un{t})_{V_0}=\sum_{v\in V_0}\left[-l \cdot \val_{\Gamma\setminus S}(v)+
\sum_{w\neq v} \un{t}_w \val_{\Gamma\setminus S}(v,w)\right]=\]
\[=\sum_{v\in V_0}\left[-l \cdot \val_{\Gamma\setminus S}(v)+
\sum_{w\in V_0\setminus\{v\}} l \cdot \val_{\Gamma\setminus S}(v,w)+
\sum_{w\in V_0^c} \un{t}_w \val_{\Gamma\setminus S}(v,w)\right]=\]
\[=\sum_{v\in V_0}\left[-l \cdot \val_{\Gamma\setminus S}(v)+
l \cdot \val_{\Gamma\setminus S}(v,V_0\setminus \{v\})+
\sum_{w\in V_0^c} \un{t}_w \val_{\Gamma\setminus S}(v,w)\right]=\]
\[=\sum_{v\in V_0}\left[-l \cdot \val_{\Gamma\setminus S}(v, V_0^c)+
\sum_{w\in V_0^c} \un{t}_w \val_{\Gamma\setminus S}(v,w)\right]=\]
\[=\sum_{v\in V_0, w\in V_0^c} (\un{t}_w-l) \val_{\Gamma\setminus S}(v,w)
\geq \]
\begin{equation}\label{eq2}
\geq \sum_{v\in V_0, w\in V_0^c} \val_{\Gamma\setminus S}(v,w)=
\val_{\Gamma\setminus S}(V_0,V_0^c)=\val_{\Gamma\setminus S}(V_0).
\end{equation}
Using the fact that $\Delta_0(\un{t})=\un{d}-\un{e}$,
 the above inequality (\ref{eq2}) contradicts the strict inequality (\ref{eq1})
for $W=V_0$, which holds since $V_0$ is a proper subset of $V(\Gamma\setminus S)$.

\underline{STEP II: $\ov{\pi}$ is surjective.}

We introduce two rational numbers measuring how far is an element $\un{d}\in C^0(\Gamma\setminus S,\Z)_{q-|S|}$
from being in $\Bss$.
For any $\un{d}\in C^0(\Gamma\setminus S,\Z)_{q-|S|}$ and any $W\subseteq V(\Gamma)$ (non necessarily proper), set
\begin{equation}
%\label{invar}
\begin{sis}
& \epsilon(\un{d},W):=\un{d}_W+|S\cap E(\Gamma[W])|-\un{q}_W-\frac{\val(W)}{2}+\val_S(W),\\
& \eta(\un{d},W):=-\un{d}_W-|S\cap E(\Gamma[W])|+\un{q}_W-\frac{\val(W)}{2}.
\end{sis}
\end{equation}
Using the two relations
\[\begin{sis}
& \un{d}_W+\un{d}_{W^c}+|S|=\un{q}_W+\un{q}_{W^c},\\
& |S|=|S\cap E(\Gamma[W])|+|S\cap E(\Gamma[W^c])|+\val_S(W),\\
\end{sis}\]
it is easy to check that
\begin{equation}\label{sym-inv}
\epsilon(\un{d},W)=\eta(\un{d},W^c).
\end{equation}
We set also for any $\un{d}\in C^0(\Gamma\setminus S,\Z)_{q-|S|}$
\begin{equation}
%\label{max-inv}
\begin{sis}
& \epsilon(\un{d}):=\max_{W\subseteq V(\Gamma)} \epsilon(\un{d},W),\\
& \eta(\un{d}):=\max_{W\subseteq V(\Gamma)} \eta(\un{d},W).\\
\end{sis}
\end{equation}
From equation (\ref{sym-inv}), we get that
\begin{equation}\label{equ-inv}
\epsilon(\un{d})=\eta(\un{d}).
\end{equation}

%Note that since $|\un d|=|\un q|-|S|$, then $\epsilon(\un d)=\eta(\un d)=0$ if and only if $\un d \in B_{\Gamma\setminus S(\un q)}$.

We will often use in what follows that the invariants $\epsilon$ and $\eta$ satisfy the following
additive formula:  for any disjoint subsets $W_1, W_2\subset V(\Gamma)$,
we have that
\begin{equation}\label{add-for}
\begin{sis}
& \epsilon(\un{d},W_1\cup W_2)=\epsilon(\un{d},W_1)+\epsilon(\un{d},W_2)+\val_{\Gamma\setminus S}(W_1,W_2), \\
%\un{\chi_{W_1}}\cdot \un{\chi_{W_2}},\\
& \eta(\un{d},W_1\cup W_2)=\eta(\un{d},W_1)+\eta(\un{d},W_2)+\val_{\Gamma\setminus S}(W_1,W_2).
%\un{\chi_{W_1}}\cdot \un{\chi_{W_2}},
\end{sis}
\end{equation}
%where $\un{\chi_{W_i}}\in C^0(\Gamma\setminus S,\Z)$ is the characteristic function (\ref{char-fun})
%of $W_i$ and $\cdot$ is the symmetric form (\ref{form-C^0}) with respect to the subgraph $\Gamma\setminus S$.
Let us prove the second additive formula; the proof of the first one is similar and left to the
reader. Using the additivity (\ref{add-val}) of $\val$ and $\val_S$, we compute:
\[\eta(\un{d},W_1\cup W_2)=-\un{d}_{W_1\cup W_2}-|S\cap E(\Gamma[W_1\cup W_2])|+\un{q}_{W_1\cup W_2}
-\frac{\val(W_1\cup W_2)}{2}=\]
\[=-\un{d}_{W_1}-\un{d}_{W_2}-|S\cap E(\Gamma[W_1])|-|S\cap E(\Gamma[W_2])|-\val_S(W_1,W_2)
+\un{q}_{W_1}+\un{q}_{W_2}+\]
\[-\frac{\val{W_1}+\val{W_2}-2\val(W_1,W_2)}{2}= \eta(\un{d},W_1)+\eta(\un{d},W_2)-\val_S(W_1,W_2)+\]
\[+\val(W_1,W_2)=\eta(\un{d},W_1)+\eta(\un{d},W_2)+\val_{\Gamma\setminus S}(W_1,W_2).\]
%$$=\eta(\un{d},W_1)+\eta(\un{d},W_2)+\un{\chi_{W_1}}\cdot \un{\chi_{W_2}}.$$

\noindent For an element $\un{d}\in C^0(\Gamma\setminus S,\Z)_{q-|S|}$, consider the following sets:
\begin{equation*}
\begin{sis}
& S^+_{\un{d}}:=\{W\subseteq V(\Gamma)\: :\: \epsilon(\un{d},W)=\epsilon(\un{d})\},\\
& S^-_{\un{d}}:=\{W\subseteq V(\Gamma)\: :\: \eta(\un{d},W)=\eta(\un{d})\}.
\end{sis}
\end{equation*}
From formula (\ref{sym-inv}) and the equality $\epsilon(\un{d})=\eta(\un{d})$, it follows easily
that
\begin{equation}\label{comp-ext}
W\in S^+_{\un{d}} \Leftrightarrow W^c\in S^-_{\un{d}}.
\end{equation}
The sets $S^{\pm}_{\un{d}}$ are stable under intersection:
\begin{equation}\label{stab-inter}
W_1, W_2 \in S^{\pm}_{\un{d}} \Rightarrow W_1\cap W_2\in S^{\pm}_{\un{d}}.
\end{equation}
We will prove this for $S_{\un{d}}^+$; the proof for $S_{\un{d}}^-$ works
exactly the same. Let $\Pi_1:=W_1\setminus (W_1\cap W_2)$. Using the additivity
formula (\ref{add-for}) applied to the pair $(W_2, \Pi_1)$ of disjoint subsets of
$V(\Gamma)$ and the fact that $W_2\in S^+_{\un{d}}$, we get that
\begin{equation*}
0=\epsilon(\un{d})-\epsilon(\un{d},W_2)\geq \epsilon(\un{d},\Pi_1\cup W_2)-\epsilon(\un{d},W_2)= \epsilon(\un{d}, \Pi_1)+\val_{\Gamma\setminus S}(\Pi_1,W_2).
%\un{\chi_{\Pi_1}}\cdot \un{\chi_{W_2}}.
\end{equation*}
Using this inequality, the additivity formula (\ref{add-for}) for the disjoint pair $(W_1\cap W_2, \Pi_1)$ of
subsets of $V(\Gamma)$ and the fact that $W_1\in S_{\un{d}}^+$, we get that
\[\epsilon(\un{d})=\epsilon(\un{d}, W_1)=\epsilon(\un{d}, (W_1\cap W_2)\cup \Pi_1)=\]
\[=\epsilon(\un{d},W_1\cap W_2)
+\epsilon(\un{d},\Pi_1)+\val_{\Gamma\setminus S}(\Pi_1,W_1\cap W_2)\]
%\un{\chi_{W_1\cap W_2}}\cdot \un{\chi_{\Pi_1}}\leq $$
\[ \leq  \epsilon(\un{d},W_1\cap W_2)+\epsilon(\un{d},\Pi_1)+\val_{\Gamma\setminus S}(\Pi_1,W_2)
%\un{\chi_{W_2}}\cdot \un{\chi_{\Pi_1}}
\leq \epsilon(\un{d},W_1\cap W_2).\]
 By the maximality of $\epsilon(\un{d})$, we
conclude that $\epsilon(\un{d})=\epsilon(\un{d},W_1\cap W_2)$, i.e. that $W_1\cap W_2 \in S_{\un{d}}^+$.

Since the sets $S_{\un{d}}^{\pm}$ are stable under intersection, they admit minimum elements:
\begin{equation}
\Omega^{\pm}(\un{d}):=\bigcap_{W\in S_{\un{d}}^{\pm}} W\subseteq V(\Gamma).
\end{equation}
Note that (\ref{comp-ext}) implies that $\Omega^+(\un{d})^c\in S_{\un{d}}^-$.  Since $\Omega^-(\un{d})$
is the minimum element of $S_{\un{d}}^-$, we get that $\Omega^-(\un{d})\subseteq \Omega^+(\un{d})^c$,
or in other words
\begin{equation}
\Omega^+(\un{d})\cap \Omega^-(\un{d})=\emptyset.
\end{equation}
We set
\begin{equation*}
\Omega^0(\un{d}):=V(\Gamma)\setminus (\Omega^+(\un{d})\cup \Omega^-(\un{d})),
~\end{equation*}
so that $V(\Gamma)$ is the disjoint union of $\Omega^+(\un{d})$, $\Omega^-(\un{d})$ and
$\Omega^0(\un{d})$.

From (\ref{equ-inv}) and the fact that $\epsilon(\un{d},V(\Gamma))=
\eta(\un{d},V(\Gamma))=\epsilon(\un{d},\emptyset)=\eta(\un{d},\emptyset)=0$,
we get  that $\epsilon(\un{d})=\eta(\un{d})\geq 0$.
From \ref{def-ss-qs}(\ref{ss-cond}) and the definition of $\Omega^{\pm}(\un{d})$,
it follows that
\begin{equation}\label{proper-Omega}
\un{d}\in \Bss \Leftrightarrow \epsilon(\un{d}) \text{ or } \eta(\un{d})=0 \Leftrightarrow
\Omega^{+}(\un{d}) \text{ or } \Omega^-(\un{d})=\emptyset.
\end{equation}
Fix now an element $\un{d}\in C^0(\Gamma\setminus S,\Z)_{q-|S|}$ such that $\un{d}\not \in \Bss$.
Set
\begin{equation}\label{new-el}
\un{e}:=\un{d}+\Delta_0(\un{\chi(\Omega^+(\un{d}))}).
\end{equation}

\un{Claim:} The $0$-cochain $\un{e}$ satisfies one of the two following properties:
\begin{enumerate}[(i)]
\item $\epsilon(\un{e})<\epsilon(\un{d})$,
\item $\epsilon(\un{e})=\epsilon(\un{d})$ and
%$\Omega^-(\un{e})\supseteq \Omega^-(\un{d})$,
$\Omega^+(\un{e})\supsetneq \Omega^+(\un{d})$.
\end{enumerate}

Note that the Claim concludes the proof of Step II. Indeed, if $\un{e}$ satisfies condition (ii),
we can iterate the substitution (\ref{new-el}) until we reach an element $\un{e'}$
which satisfies condition (i), i.e. $\epsilon(\un{e'})<\epsilon(\un{d})$,
and such that $\un{e'}-\un{d}\in \im \Delta_0$.
Now observe that, if we set $N$ to be equal to two times the least common multiple
of all the denominators of the rational numbers $\{\un{q}_v\}_{v\in V(\Gamma)}$, then
$N\cdot \epsilon(\un{f})\in \Z,$
for any $\un{f}\in C^0(\Gamma\setminus S,\Z)$. Therefore, by iterating the substitution (\ref{new-el}),
we will finally reach an element $\un{e''}$ such that $\epsilon(\un{e''})=0$, i.e.
$\un{e''}\in \Bss$, and such that $\un{e'}-\un{d}\in \im \Delta_0$. This proves
that $\ov{\pi}$ is surjective.

Let us now prove the Claim. Take any subset $W\subset V(\Gamma)$ and decompose it as
a disjoint union
\begin{equation*}
W=W^+\coprod W^-\coprod W^0,
\end{equation*}
where $W^{\pm}=W\cap \Omega^{\pm}(\un{d})$ and $W^0=W\cap \Omega^0(\un{d})$.
Note that
\begin{equation}\label{ine1}
\epsilon(\un{d},W^+)\leq \epsilon(\un{d}),
\end{equation}
with equality if and only if $W^+=\Omega^+(\un{d})$ because of the minimality property
of $\Omega^+(\un{d})$. Applying (\ref{add-for}) to the disjoint pair $(\Omega^+(\un{d}), W^0)$,
we get
\[\epsilon(\un{d}, W^0)=\epsilon(\un{d},W^0\cup \Omega^+(\un{d}))-\epsilon(\un{d}, \Omega^+(\un{d}))
-\val_{\Gamma \setminus S}(W^0,\Omega^+(\un{d}))\leq \]
\begin{equation}\label{ine2}
\leq -\val_{\Gamma \setminus S}(W^0,\Omega^+(\un{d})),
\end{equation}
where we used that $\epsilon(\un{d}, W^0\cup \Omega^+(\un{d}))\leq
\epsilon(\un{d})=\epsilon(\un{d}, \Omega^+(\un{d}))$. Applying once more formula
(\ref{add-for}) to the disjoint pair $(W^-, \Omega^+(\un{d})\cup\Omega^0(\un{d}))$, we get
\[\epsilon(\un{d}, W^-)=\epsilon(\un{d},W^-\cup \Omega^+(\un{d})\cup\Omega^0(\un{d}))-
\epsilon(\un{d}, \Omega^+(\un{d})\cup\Omega^0(\un{d}))-\]
\begin{equation}\label{ine3}
-\val_{\Gamma \setminus S}(W^-,\Omega^+(\un{d})\cup\Omega^0(\un{d}))
\leq -\val_{\Gamma \setminus S}(W^-,\Omega^+(\un{d})\cup\Omega^0(\un{d})),
\end{equation}
where we used that (see (\ref{equ-inv}) and (\ref{sym-inv}))
\[\epsilon(\un{d},W^-\cup \Omega^+(\un{d})\cup \Omega^0(\un{d}))\leq \epsilon(\un{d})= \eta(\un{d})=
\eta(\un{d}, \Omega^-(\un{d}))= \epsilon(\un{d}, \Omega^-(\un{d})^c)=\]
\[=\epsilon(\un{d}, \Omega^+(\un{d})\cup\Omega^0(\un{d})).\]
Moreover, if the equality holds in (\ref{ine3}), then by (\ref{sym-inv})
\[\eta(\un{d})=\epsilon(\un{d},W^-\cup \Omega^+(\un{d})\cup \Omega^0(\un{d}))=
\eta(\un{d}, \Omega^-(\un{d})\setminus W^-),\]
which implies that $\Omega^-(\un{d})\setminus W^-\in S^-_{\un{d}}$ and hence that $W^-=\emptyset$
because of the minimality property of $\Omega^-(\un{d})$. Using the formula
\[\epsilon(\un{e},W)=\epsilon(\un{d},W)+ \Delta_0(\un{\chi(\Omega^+(\un{d}))})_W\]
and (\ref{for-cha}), the above inequalities (\ref{ine1}), (\ref{ine2}), (\ref{ine3})
give:
\begin{equation}\label{ine4}
\begin{sis}
& \epsilon(\un{e},W^+)=\epsilon(\un{d},W^+)-\val_{\Gamma\setminus S}(W^+,\Omega^+(\un{d})^c)
\leq  \epsilon(\un{d})-\val_{\Gamma\setminus S}(W^+,\Omega^+(\un{d})^c),\\
& \epsilon(\un{e},W^0)=\epsilon(\un{d},W^0)+ \val_{\Gamma\setminus S}(W^0,\Omega^+(\un{d}))\leq 0 ,\\
& \epsilon(\un{e},W^-)=\epsilon(\un{d},W^-)+ \val_{\Gamma\setminus S}(W^-,\Omega^+(\un{d}))
\leq -\val_{\Gamma \setminus S}(W^-,\Omega^0(\un{d})),\\
\end{sis}
\end{equation}
Using twice the additive formula (\ref{add-for}) for the disjoint union
$W=W^+\coprod W^0\coprod W^-$ and the above inequalities (\ref{ine4}),  we compute
\[\epsilon(\un{e},W)=\epsilon(\un{e},W^+)+\epsilon(\un{e},W^0)+\epsilon(\un{e},W^-)
+\val_{\Gamma\setminus S}(W^+, W^0) +\val_{\Gamma\setminus S}(W^+, W^-)+\]
\[+\val_{\Gamma\setminus S}(W^0, W^-)\leq \epsilon(\un{d})-
\val_{\Gamma\setminus S}(W^+, \Omega^0(\un{d})\setminus W^0) -
\val_{\Gamma\setminus S}(W^+, \Omega^-(\un{d})\setminus W^-)+\]
\begin{equation}\label{ine5}
-\val_{\Gamma\setminus S}(W^-, \Omega^0(\un{d})\setminus W^0)\leq
\epsilon(\un{d}).
\end{equation}
In particular, we have that $\epsilon(\un{e})\leq \epsilon(\un{d})$. If
the inequality in (\ref{ine5}) is attained for some $W\subseteq V(\Gamma)$,
i.e. if $\epsilon(\un{e})=\epsilon(\un{d})$,
% and if $\Omega^+(\un e)\subseteq W$)
 then also the inequalities
in (\ref{ine1}) and (\ref{ine3}) are attained for $W$, and we observed before that this
implies that
\begin{equation}\label{sharp1}
\begin{sis}
& W^+=\Omega^+(\un{d}), \\
& W^-=\emptyset.
\end{sis}
\end{equation}
Moreover, all the inequalities in (\ref{ine5}) are attained for $W$ and, substituting
(\ref{sharp1}), this implies that
\begin{equation}\label{sharp2}
\begin{sis}
& \val_{\Gamma\setminus S}(\Omega^+(\un{d}), \Omega^0(\un{d})\setminus W^0)=0, \\
&\val_{\Gamma\setminus S}(\Omega^+(\un{d}), \Omega^-(\un{d}))=0.
\end{sis}
\end{equation}
Since $\Gamma\setminus S$ is connected by hypothesis and $\Omega^+(\un{d})$ is a proper subset of
$V(\Gamma\setminus S)=V(\Gamma)$ because we fixed $\un{d}\not \in \Bss$ (see
(\ref{proper-Omega})), we deduce that  (using (\ref{sharp2})):
\[0<\val_{\Gamma\setminus S}(\Omega^+(\un{d}))=\val_{\Gamma\setminus S}(\Omega^+(\un{d}),
\Omega^-(\un{d})\cup \Omega^0(\un{d}))=\val_{\Gamma\setminus S}(\Omega^+(\un{d}), W^0).
\]
This gives that $W^0\neq \emptyset$, which implies that $W=W^+\cup W^0\supsetneq W^+=\Omega^+(\un{d})$
by (\ref{sharp1}). Since this holds for all $W\subseteq V(\Gamma)$ such that $\epsilon(\un e,W)=\epsilon (\un d) (=\epsilon (\un e))$, it holds in particular for $\Omega^+(\un e)$. Therefore, we get that $\Omega^+(\un{e})\supsetneq \Omega^+(\un{d})$ and the claim is proved.

\underline{STEP III: $\im(\ov{\pi})=\im(\pi)$.}

Let $\un{d}\in \Bss$, which by (\ref{proper-Omega}) is equivalent to have that $\epsilon(\un{d})=\eta(\un{d})=0$.
Let
\begin{equation*}
S_{\un{d},v_0}^-:=\{W\subseteq V(\Gamma)\: :\: \eta(\un{d},W)= \eta(\un d)=0\text{ and } v_0\in W\}.
\end{equation*}
The same proof as in Step II gives that $S_{\un{d},v_0}^-$ is
stable for the intersection (see (\ref{stab-inter})). Therefore, the set $S_{\un{d},v_0}^-$ admits a
minimum element
\begin{equation*}
\Omega^-(\un{d},v_0):=\bigcap_{W\in S_{\un{d},v_0}^-} W\subseteq V(\Gamma).
\end{equation*}
Note that, by the definition \ref{def-ss-qs}(\ref{qs-cond}), it follows that
\begin{equation}\label{proper-Omega-}
\un{d}\in \Bqs \Leftrightarrow \un{d}\in \Bss \text{ and }\Omega^-(\un{d},v_0)=V(\Gamma).
\end{equation}
Fix now an element $\un{d}\in \Bss\setminus \Bqs$ and consider the element
\begin{equation*}
\un{e}:=\un{d}-\Delta_0(\un{\chi(\Omega^-(\un{d},v_0))}).
\end{equation*}

\un{Claim:} The $0$-cochain $\un{e}$ satisfies the following two properties:
\begin{enumerate}[(i)]
\item $\eta(\un{e})=0$;
\item $\Omega^-(\un{e},v_0)\supsetneq \Omega^-(\un{d},v_0)$.
\end{enumerate}
The Claim concludes the proof of Step III. Indeed, property (i) says
that $\un{e}\in \Bss$  by (\ref{proper-Omega}) and therefore, by iterating
the above construction, we will find an element $\un{e'}\in \Bss$ such that
$\un{d}-\un{e'}\in \im(\Delta_0)$ and $\Omega^-(\un{e},v_0)=V(\Gamma)$,
which implies that $\ov{\pi}(\un{d})=\pi(\un{e'})$ and
$\un{e'}\in \Bqs$ by (\ref{proper-Omega-}). This shows that
$\im(\ov{\pi})=\im(\pi)$, q.e.d.

Let us now prove the Claim. Given any subset $W\subseteq V(\Gamma)$, we decompose
it as a disjoint union
\[W=W^-\coprod W^+,\]
where $W^-:=W\cap \Omega^-(\un{d},v_0)$ and $W^+:=W\setminus \Omega^-(\un{d},v_0)$.
Applying formula (\ref{add-for}) to the disjoint pair $(W^+, \Omega^-(\un{d},v_0))$
and using that $\eta(\un{d})=0$ and $\Omega^-(\un{d},v_0)\in S_{\un{d},v_0}^-$, we get
\[\eta(\un{d},W^+)=\eta(\un{d},W^+\cup \Omega^-(\un{d},v_0))-\eta(\un{d},\Omega^-(\un{d},v_0))
-\val_{\Gamma\setminus S}(W^+,\Omega^-(\un{d},v_0))\leq \]
\begin{equation}\label{ine6}
-\val_{\Gamma\setminus S}(W^+,\Omega^-(\un{d},v_0)).
\end{equation}
Applying again formula (\ref{add-for}) to the disjoint pair $(W^+,W^-)$ and using
$\eta(\un{d})=0$ and (\ref{ine6}), we get
\[\eta(\un{d},W)=\eta(\un{d},W^-)+\eta(\un{d}, W^+)+\val_{\Gamma\setminus S}(W^+,W^-)\leq
\]
\begin{equation}\label{ine7}
\leq 0 -\val_{\Gamma\setminus S}(W^+,\Omega^-(\un{d},v_0)) + \val_{\Gamma\setminus S}(W^+,W^-)=
-\val_{\Gamma\setminus S}(W^+,\Omega^-(\un{d},v_0)\setminus W^-).
\end{equation}
Using the formula
\begin{equation}\label{for-e}
\eta(\un{e},W)=\eta(\un{d},W)+ \Delta_0(\un{\chi(\Omega^-(\un{d},v_0))})_W
\end{equation}
and (\ref{for-cha}), the above inequality (\ref{ine7}) gives:
\[\eta(\un{e},W)=\eta(\un{d},W)-\val_{\Gamma\setminus S}
(W^-,(\Omega^-(\un{d},v_0)\cup W^+)^c)+\]
\begin{equation}\label{ine8}
+ \val_{\Gamma\setminus S}(W^+, \Omega^-(\un{d},v_0)\setminus W^-)\leq -
\val_{\Gamma\setminus S}(W^-,(\Omega^-(\un{d},v_0)\cup W^+)^c) \leq 0,
\end{equation}
which proves  part (i) of the Claim. Assume moreover that the inequality in (\ref{ine8})
is attained for some $W\subseteq V(\Gamma)$ such that $v_0\in W$. Then all
the inequalities must be attained also in (\ref{ine7}) and in particular $\eta(\un{d},W^-)=0$.
Since $v_0\in W\cap \Omega^-(\un{d},v_0)=W^-$, we deduce that  $W^-\in S_{\un{d},v_0}^-$ and
hence, by the minimality of $\Omega^-(\un{d},v_0)$, we get that $W^-=\Omega^-(\un{d},v_0)$.
It follows that $\Omega^-(\un{e},v_0)\supseteq \Omega^-(\un{d}, v_0)$. Using again
formulas (\ref{for-e}) and (\ref{for-cha}), together with the fact that
$\Omega^-(\un{d},v_0)\in S_{\un{d},v_0}^-$, we compute
\[\eta(\un{e},\Omega^-(\un{d},v_0))=\eta(\un{d}, \Omega^-(\un{d},v_0))-
\val_{\Gamma\setminus S}( \Omega^-(\un{d},v_0), \Omega^-(\un{d},v_0)^c)=\]
\[=-\val_{\Gamma\setminus S}( \Omega^-(\un{d},v_0), \Omega^-(\un{d},v_0)^c)<0,
\]
because $\Gamma\setminus S$ is connected by hypothesis and $\Omega^-(\un{d},v_0)$
is a proper subset of $V(\Gamma\setminus S)=V(\Gamma)$ by our initial assumption
$\un{d}\in \Bss\setminus \Bqs$ (see (\ref{proper-Omega-})) together with the
fact that $v_0\in \Omega^-(\un{d},v_0)$. Therefore $\Omega^-(\un{d},v_0)\not\in
S_{\un{e},v_0}^-$ and hence $\Omega^-(\un{e},v_0)\supsetneq \Omega^-(\un{d},v_0)$,
i.e. we get part (ii) of the Claim.
\end{proof}

\begin{remark}
The previous result was obtained for $S=\emptyset$ in \cite[Lemma 3.1.5]{Bus},
building upon ideas from \cite[Prop. 4.1]{caporaso}. Marco Pacini (\cite{Pac}) has communicated to us a different proof of the above result.
\end{remark}

By putting together Lemma \ref{empty}, Proposition \ref{card-qs} and equation
(\ref{lat-class}), we deduce the following

\begin{cor}\label{cor-card}
The cardinality of set $\Bqs$ is equal to the complexity $c(\Gamma\setminus S)$ of $\Gamma\setminus S$.
In particular, $\Bqs\neq \emptyset$ if and only if $\Gamma\setminus S$ is connected.
\end{cor}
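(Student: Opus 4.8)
The plan is to read the statement off from the three results just established, by a short two-case analysis; there is essentially no new work to do.

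First I would dispose of the case where $\Gamma\setminus S$ is disconnected. Such a graph has no spanning tree, so $c(\Gamma\setminus S)=0$, while Lemma~\ref{empty} gives $\Bqs=\emptyset$, hence $|\Bqs|=0$ as well. Thus the asserted equality holds (both sides vanish) in this case, and moreover this already proves the forward implication of the ``in particular'' clause: $\Bqs\neq\emptyset$ forces $\Gamma\setminus S$ to be connected.

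Next, assume $\Gamma\setminus S$ is connected. Then Proposition~\ref{card-qs} supplies a bijection between $\Bqs$ and the finite group $C^0(\Gamma\setminus S,\Z)_{q-|S|}/\im(\Delta_0)$, so $|\Bqs|$ is the order of that quotient. I would then apply the identity (\ref{lat-class}) --- which, although stated in~\ref{dcg} for the dual graph of a nodal curve, is nothing but a restatement of Kirchhoff's Matrix-Tree Theorem combined with (\ref{lat-class0}), and hence holds for \emph{any} connected finite graph and \emph{any} degree $a\in\Z$ --- to the graph $\Gamma\setminus S$ with $d$ replaced by $q-|S|$. This identifies the order of the quotient with the complexity $c(\Gamma\setminus S)$, giving $|\Bqs|=c(\Gamma\setminus S)$. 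Putting the two cases together proves the displayed equality unconditionally, and the ``in particular'' clause then follows at once, since a finite graph has a spanning tree (equivalently $c(\Gamma\setminus S)>0$) precisely when it is connected.

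The only point warranting any care is the one flagged above: one must be sure that (\ref{lat-class}) is invoked in its general graph-theoretic form $|C^0(\Gamma',\Z)_a/\im(\Delta_0)|=c(\Gamma')$, valid for an arbitrary connected finite graph $\Gamma'$ and arbitrary $a\in\Z$ --- here with $\Gamma'=\Gamma\setminus S$ and $a=q-|S|$. Beyond this bookkeeping there is no obstacle; all the substance of the argument is already carried by Lemma~\ref{empty} and, above all, by Proposition~\ref{card-qs}, whose Step~II (surjectivity of $\ov\pi$) does the real work.
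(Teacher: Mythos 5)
Your argument is correct and is essentially the paper's own proof: the Corollary is deduced there by combining Lemma \ref{empty}, Proposition \ref{card-qs} and the identity (\ref{lat-class}) (i.e.\ Kirchhoff's theorem together with (\ref{lat-class0})), applied to the connected graph $\Gamma\setminus S$, exactly as you do. Your remark that (\ref{lat-class}) must be read in its general graph-theoretic form is a fair bit of bookkeeping, but it is the same route, not a different one.
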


\end{nota}

\section{Fine compactified Jacobians and N\'eron models}

Let $f:\X\to B=\Spec(R)$ be a one-parameter   regular local smoothing of $X=\X_k$ (see \ref{notner}).
Fix a section section $\sigma:B\to \X$ and a polarization $\un q$ on $X$ (see \ref{pola-nota}) such that
$d:=|\un q|$.
Consider the $B$-scheme $J_f^{\sigma}(\un q)$ of \ref{finecoarse} and denote by   $\Jsm$ its smooth locus over $B$.

%Consider the \textit{$\sigma$-semistable Picard} functor
%$$\Jsmf:SCH_S\to SET$$
%which associates to an $S$-scheme $T$ the set of equivalent classes of $\sigma$-quasistable line bundles on $\L\in\Pic \X_T$ with respect to $\E$.
%Two line bundles $\L$ and $\L'$ are said to be equivalent if there exists $M\in\Pic T$ such that $\L\cong\L'\otimes \pi_T^*(T')$.

\begin{thm}
\label{picner}
Let $f: \X\la B$ be a one-parameter  regular local  smoothing of $X=\X_k$. Let $\sigma$ be a section of $f$
and $\un q$ a polarization on $X$ such that $d:=|\un q|$.
Then $J_f^{\sigma}(\un q)_{\rm sm}$ is isomorphic to the N\'eron model $\nerd$ of the degree-$d$ Jacobian of the generic fiber $\X_K$ of $f$.
\end{thm}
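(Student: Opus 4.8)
The plan is to exploit the N\'eron mapping property to produce a morphism $\nerd \to \Jsm$ and then to check it is an isomorphism by comparing the two sides fiber by fiber over $B$ using the explicit gluing descriptions we already have. Recall from Fact~\ref{smooth}\eqref{smooth1} that $\Jsm$ consists precisely of those sheaves on $\X$ whose restriction to the special fiber $X$ is a line bundle; equivalently, $\Jsm$ is the open subscheme of $J_f^\sigma(\un q)$ obtained by removing the locus of sheaves that fail to be locally free at some node of $X$. In particular $\Jsm$ is smooth over $B$, its generic fiber is $\Pic^d(\X_K)$, and by the results recalled in \ref{finecoarse} its special fiber is the open subset $\JXsm \subset J_X^P(\un q)$ of $P$-quasistable \emph{line bundles} on $X$.

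First I would describe $\JXsm$ combinatorially. A line bundle $L$ on $X$ lies in $J_X^P(\un q)$ if and only if its multidegree $\mdeg L = \un d \in \Z^\gamma$ satisfies the $P$-quasistability inequalities of Definition~\ref{sheaf-ss-qs}, i.e. $\un d$ (viewed as a $0$-cochain on $\Gamma_X$) lies in the set $\Bqs$ with $S=\emptyset$, $v_0 = v_P$; for each such multidegree the fiber is a torsor under $J(X) = \Pic^{\un 0}(X)$. Hence $\JXsm$ is a disjoint union of $|\Bqs|$ copies of $J(X)$, and by Corollary~\ref{cor-card} (applied with $S=\emptyset$) this number equals $c(\Gamma_X)$. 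On the other hand, by Fact~\ref{expl-Neron} the special fiber $N_X^d$ of $\nerd$ is also a disjoint union of $c(\Gamma_X)$ copies of $J(X)$, one for each class $\delta \in \dcg^d$. The key point is that $P$-quasistability picks out exactly one multidegree in each equivalence class modulo $\LX = \im(\Delta_0)$: this is precisely the content of Proposition~\ref{card-qs}, which says the composite $\Bqs \hookrightarrow C^0(\Gamma_X,\Z)_d \twoheadrightarrow C^0(\Gamma_X,\Z)_d/\im(\Delta_0) \cong \dcg^d$ is a bijection.

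Next I would construct the map. Since $\Jsm$ is smooth and separated over $B$ with generic fiber $\Pic^d(\X_K)$, and the inclusion $\Jsm \hookrightarrow J_f^\sigma(\un q)$ identifies the generic fiber with $\nerd_K = \Pic^d(\X_K)$, one has two natural candidates. Either: apply the N\'eron mapping property to the identity on $\Pic^d(\X_K)$ to get $\Psi\colon \Jsm \to \nerd$ extending the identity; or go the other way using the separated quotient description. I would use the separated quotient map $q\colon \Pic_f^d \to \nerd$ of \eqref{sepquot}. Indeed $\Jsm$ is a line-bundle locus, so there is a natural $B$-map $\Jsm \to \Pic_f^d$ (a sheaf locally free along the special fiber, on a regular total space $\X$, is a line bundle on $\X$); composing with $q$ gives $\Psi\colon \Jsm \to \nerd$. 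By construction $\Psi$ restricts to the identity on generic fibers. It remains to show $\Psi$ is an isomorphism, and since source and target are smooth over $B$ with isomorphic generic fiber, it suffices to show $\Psi_k\colon \JXsm \to N_X^d$ is an isomorphism. Unravelling the descriptions above, $\Psi_k$ sends the component of $\JXsm$ indexed by $\un d \in \Bqs$ to the component of $N_X^d$ indexed by $[\un d] \in \dcg^d$, compatibly with the $J(X)$-torsor structures (the isomorphisms $\iota_f(\un d,\un d')$ of \ref{notner} are exactly tensoring by $\O_\X(\sum n_i C_i)$, and these restrict on $X$ to elements of $J(X)$, hence respect the torsor structure). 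So $\Psi_k$ is a $J(X)$-equivariant map that is bijective on the set of connected components by Proposition~\ref{card-qs}, and is an isomorphism on each component; therefore $\Psi_k$ is an isomorphism.

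Finally I would upgrade ``isomorphism on generic fiber and on special fiber'' to ``isomorphism''. Here one uses that $\Psi$ is a $B$-morphism between two schemes smooth and of finite type over the DVR $B$, which is an isomorphism on the closed fiber; by the standard fiberwise criterion for a morphism of smooth $B$-schemes (or: $\Psi$ is \'etale since it is so on both fibers, and bijective since it is so on both fibers, hence an open immersion, and surjective, hence an isomorphism) one concludes $\Psi$ is an isomorphism. I expect the main obstacle to be the bookkeeping in the previous paragraph: making precise that the combinatorial bijection $\Bqs \xrightarrow{\sim}\dcg^d$ of Proposition~\ref{card-qs} really is induced by the geometric map $\Psi_k$ — that is, that the component of $\Jsm$ carrying line bundles of multidegree $\un d$ maps, under the separated-quotient map $q$, to the component $\picf{[\un d]}$ of the N\'eron model — and that this identification is $J(X)$-equivariant. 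This is where \ref{notner}, \ref{finecoarse}, Fact~\ref{smooth}, Fact~\ref{expl-Neron} and Proposition~\ref{card-qs} all have to be assembled; everything else is formal DVR-scheme theory.
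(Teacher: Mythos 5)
Your proposal is correct and follows essentially the same route as the paper: identify $\Jsm$ via Fact \ref{smooth} with the gluing of the $\picf{\un d}$ for $\un d$ a $P$-quasistable multidegree, compare with the explicit description of $\nerd$ in Fact \ref{expl-Neron}, and conclude via the bijection $B_{\Gamma_X}^{v_P}(\un q)\to\Delta_X^d$ of Proposition \ref{card-qs}, using the separated-quotient map $q$ of \eqref{sepquot}. The only cosmetic difference is that you define the map directly as $q$ composed with the inclusion $\Jsm\hookrightarrow\Pic_f^d$, whereas the paper first invokes the N\'eron mapping property and then observes the resulting map factors through $q$; the substance is identical.
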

\begin{proof}
According to Fact \ref{smooth}(\ref{smooth1}), $\Jsm$ parametrizes line bundles on $\X$ of relative degree $d$ and whose
special fiber is $\un q$-P-quasistable, where $P:=\sigma(\Spec k)\in \Xsm$. If we denote by   $v_0$ the vertex of the dual graph $\Gamma_X$
of $X$ corresponding to the irreducible component to which $P$ belongs, then the $\un q$-P-quasistable multidegrees on $X$
correspond to the $0$-cochains belonging to $B_{\Gamma_X}^{v_0}(\un q)$ in the notation of Definition \ref{def-ss-qs}. Therefore, we get a canonical $B$-isomorphism
\begin{equation}
\label{pfclaim}
\Jsm \cong \frac{\coprod_{\md \in B_{\Gamma_X}^{v_0}(\un q)}\picf{\md} }{\sim_K},
\end{equation}
where $\sim_K$ denotes the gluing along the general fibers of $\picf{\un d}$ which are isomorphic to $\Pic^d(\X_K)$.
Since the general fiber of $\Jsm$ is isomorphic
to $\Pic^d(\X_K)$, the N\'eron mapping property gives a map (see Fact \ref{expl-Neron}):
\[r:\Jsm\cong \frac{\coprod_{\md \in B_{\Gamma_X}^{v_0}(\un q)}\picf{\md} }{\sim_K}\longrightarrow  \nerd\cong \frac{\coprod_{\delta \in \Delta_X^d}\picf{\delta} }{\sim_K}.
\]
Since we have a natural inclusion $i:\Jsm\hookrightarrow \picf{d}$ which is the identity on the general fibers, the map $r$ factors
through the map $q$ of (\ref{sepquot}).
Therefore the map $r$ sends each $\picf{\un d}$ into $\picf{[\un d]}$. Since the natural map
$ B_{\Gamma_X}^{v_0}(\un q)\to \Delta_X^d$ is a bijection according to Proposition \ref{card-qs}, we conclude that the map $r$ is an isomorphism.

%$$\xymatrix{
%\Jsm\cong \frac{\coprod_{\md \in \Bqs}\picf{\md} }{\sim_K} \ar@{^{(}->}[rr]^{i} \ar[dr]&& \picf{d}\cong \frac{\coprod_{\md \in \Z^{\gamma}:\: : |\un d|=d}\picf{\md} }{\sim_K}
%\ar@{->>}[dl]^q\\
%& \nerd\cong \frac{\coprod_{\delta \in \Delta_X^d}\picf{\delta} }{\sim_K} &
%}$$

\end{proof}

%\begin{remark}
%In \ref{picner} the hypothesis that $\X$ is regular is necessary, see
% \cite[Example 6.7]{capneron} for an example illustrating why.
%\end{remark}

%\begin{remark}
%The proof of the following result is just a translation of the proof of the Theorem 6.1 in \cite{capneron} to our setting.
%Our statement deals with a different compactification of the one in loc. cit. which has a better functorial behavior and holds more generally for a wider class of curves. The price we
%pay is that of requiring the morphism to have a section.
%\end{remark}

\begin{remark}
\noindent
\begin{enumerate}[(i)]
\item In the terminology of \cite[Def. 2.3.5]{capsurvey} and  \cite[Def. 1.4 and Prop. 1.6]{capNtype}, the above Theorem \ref{picner} says that the fine compactified
Jacobians $J_X^P(\un q)$ are always of N\'eron-type (or N-type).
\item  Using Theorem \ref{nondeg-thm} and Remark \ref{can-nondeg},  the above Theorem \ref{picner} recovers \cite[Thm 2.9]{capNtype}, which is
a generalization of  \cite[Thm. 6.1]{capneron}:  $\ov{P_X^d}$ is of N\'eron-type  if $X$ is weakly $d$-general.
\end{enumerate}
\end{remark}

\section{A stratification of the fine compactified Jacobians}\label{Strat-sec}

%By Theorem \ref{picner}, $J^P_X(\un q)$ is a compactification of the special fiber $N_X^d$ of the N\' eron model of the Picard variety $\Pic^d(\X_K)$
%for every one-parameter  regular local smoothing $f:\X\to B=\Spec(R)$ of $X$.

In the present section we shall exhibit a stratification of  $J^P_X(\un q)$ in terms of fine compactified
Jacobians of partial normalizations of $X$.

%N\'eron models associated to all the connected partial normalizations of $X$.
% (Theorem \ref{strata}).
%In section \ref{quotient} we shall prove that $\J^P_X$ is dominated by the  N\'eron model of
%a degree-$2$ base change of $\jacd$. See \cite{andreatta} for a different approach to
%the problem of compactifying  N\'eron models of Jacobians.

For each subset $S\subseteq \Xsing$, denote by $\JPS$ the subset of $\JPX$ corresponding to torsion-free sheaves which are not free exactly at $S$.
Each $\JPS$ is a locally closed subset of $J_X^P(\un q)$ that we endow with the reduced schematic structure.
Similarly, we endow the closure $\ov{\JPS}$ of each stratum $\JPS$ with the reduced schematic structure.
%According to Proposition \ref{multdeg-sh}, $\JPS$ is non empty if and only if $c(\Gamma_{X_S})>0$, which happens if and only if $X_S$ is connected.
We have the following stratification
\begin{equation}\label{stratjac}
\JPX=\coprod_{S\subseteq \Xsing }\JPS.
\end{equation}
% with $S_e^Y\cup S^Y_i$  the decomposition of $S\cap Y$ in nodes that lie in $Y\cap Y^c$ and in $Y$ only, respectively.

\begin{thm}\label{strata-Jac}
The stratification of $\JPX$ given in (\ref{stratjac}) satisfies the following properties:
\begin{enumerate}[(i)]
\item \label{strata-Jac1} Each stratum $\JPS$ is a disjoint union of $c(\Gamma_{X_S})$ torsors for the generalized Jacobian $J(X_S)$ of the partial normalization of
$X$ at $S$. In particular, $\JPS$ is non-empty if and only if $X_S$ is connected.
\item \label{strata-Jac2} The closure of each stratum is given by
\[\ov{\JPS}=\coprod_{S\subset S'} J^P_{X, S'}(\un q).\]
% if and only if $S\subseteq S'$;
\item \label{strata-Jac3} The pushforward $(\nu_S)_*$ along the partial normalization map $\nu_S:X_S\to X$ gives isomorphisms:
\[\begin{sis}
& J_{X_S}^P(\un {q^S})_{\rm sm}\cong J_{X,S}^P(\un q), \\
& J_{X_S}^P(\un {q^S})\cong \ov{J_{X,S}^P(\un q)},\\
\end{sis}
\]
where $\un{q^S}$ is the polarization on $X_S$ defined in Lemma-Definition \ref{pola-norma} and $P$ is seen as a smooth point of $X_S$
using the isomorphism  $(X_S)_{\rm sm}\cong \Xsm$.
\end{enumerate}
\end{thm}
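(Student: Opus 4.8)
The plan is to establish parts (i)--(iii) more or less simultaneously, using the sheaf-theoretic dictionary of Proposition~\ref{sheaf-linebun} to translate statements about torsion-free sheaves on $X$ that are not free exactly at $S$ into statements about line bundles on the partial normalization $X_S$, and then feed these into the numerical/graph-theoretic machinery of Section~2 (Corollary~\ref{cor-card}).

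\textbf{Step 1: the pushforward gives a bijection on $S$-strata.} By Proposition~\ref{sheaf-linebun}\eqref{sheaf-linebun1}, the map $(\nu_S)_*:\Pic(X_S)\to {\rm Tors}_S(X)$ is a bijection, compatible with the $J(X)$-action via the surjection $\nu_S^*:J(X)\twoheadrightarrow J(X_S)$ (part \eqref{sheaf-linebun2}). I would first check that $(\nu_S)_*$ carries $\un{q^S}$-$P$-quasistable line bundles on $X_S$ to $\un q$-$P$-quasistable sheaves in ${\rm Tors}_S(X)$ and conversely. This is a direct computation: for a subcurve $Y\subseteq X$ with preimage $Y_S\subseteq X_S$, combine the degree formula $\deg_Y(\nu_S)_*(M)=\deg_{Y_S}M+|S_i^Y|$ of Proposition~\ref{sheaf-linebun}\eqref{sheaf-linebun3} with the definition of $\un{q^S}_{Y_S}=\un q_Y-\tfrac{|S_e^Y|}{2}-|S_i^Y|$ from Lemma-Definition~\ref{pola-norma} and the identity $\delta_{Y_S}=\delta_Y-|S_e^Y|$ (the nodes $Y$ shares with $Y^c$ that lie in $S$ are exactly those resolved in $X_S$, noting $S_i^Y$ contributes no boundary in $X_S$ after normalization); one gets that the quasistability inequality for $(\nu_S)_*(M)$ at $Y$ is literally the quasistability inequality for $M$ at $Y_S$, strict simultaneously since $P\in Y \Leftrightarrow P\in Y_S$. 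Since every subcurve of $X_S$ is of the form $Y_S$, this gives a set-theoretic bijection $J_{X_S}^P(\un{q^S})\xrightarrow{\ \sim\ } J_{X,S}^P(\un q)$. Upgrading it to an isomorphism of schemes: $J_{X_S}^P(\un{q^S})$ is reduced (indeed smooth, being a disjoint union of torsors under $J(X_S)$ by the $N$-type discussion) and $(\nu_S)_*$ is induced by a morphism of functors (pushforward of a flat family along the finite map $\nu_S\times\mathrm{id}$ stays flat with the right fibers), so it is a morphism from a reduced scheme that is bijective onto the reduced locally closed subscheme $J_{X,S}^P(\un q)$; to get that it is an isomorphism I would invoke Fact~\ref{smooth}\eqref{smooth1} to see $J_{X_S}^P(\un{q^S})$ is smooth and then check $(\nu_S)_*$ induces isomorphisms on tangent/deformation spaces, or more cheaply use that $J_{X_S}^P(\un{q^S})\to J_X^P(\un q)$ is a monomorphism of finite type which is unramified (again by comparing with the universal deformation of $\I$ and its partial-normalization description) hence an immersion, with image the reduced scheme $J_{X,S}^P(\un q)$.

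\textbf{Step 2: counting components and non-emptiness.} Once the bijection of Step~1 is in place, part \eqref{strata-Jac1} follows: $J_{X_S}^P(\un{q^S})$ has, for each multidegree class, exactly one quasistable representative, and by Corollary~\ref{cor-card} (applied to the graph $\Gamma_{X_S}=\Gamma_X\setminus S$) the set of such classes has cardinality $c(\Gamma_{X_S})$; each class contributes a torsor under $J(X_S)=\Pic^{\underline 0}(X_S)$. Corollary~\ref{cor-card} also gives that this is nonempty iff $\Gamma_X\setminus S$, i.e.\ $X_S$, is connected, which is the second assertion of \eqref{strata-Jac1}. (Strictly, I should note that $J_{X_S}^P(\un{q^S})_{\mathrm{sm}}$ is the locus of line bundles, which is all of it since $X_S$ need not be nodal-reducible-obstructed---but $X_S$ \emph{is} again a nodal curve and $J_{X_S}^P(\un{q^S})_{\mathrm{sm}}$ is exactly its line-bundle locus by Fact~\ref{smooth}\eqref{smooth1}; this matches the first isomorphism in \eqref{strata-Jac3}, while the second will come from Step~3.)

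\textbf{Step 3: the closure of a stratum.} For part \eqref{strata-Jac2} I would argue that $\ov{J_{X,S}^P(\un q)}=\coprod_{S\subseteq S'}J_{X,S'}^P(\un q)$ by showing both inclusions. That the closure is contained in $\coprod_{S\subseteq S'}J_{X,S'}^P(\un q)$ is semicontinuity: in a flat family of sheaves, the locus where the sheaf fails to be free at a given node is closed, so $\NF$ can only grow under specialization, i.e.\ a limit of sheaves not-free-exactly-at-$S$ is not-free-at-some-$S'\supseteq S$. For the reverse inclusion I would show that each $J_{X,S'}^P(\un q)$ with $S'\supseteq S$ lies in the closure, by exhibiting explicit one-parameter degenerations: given a quasistable sheaf $\I'$ in ${\rm Tors}_{S'}(X)$, corresponding via Proposition~\ref{sheaf-linebun}\eqref{sheaf-linebun1} to a line bundle on $X_{S'}=(X_S)_{S'\setminus S}$, I can smooth the nodes in $S'\setminus S$ to produce a family whose general member lies in ${\rm Tors}_S(X)$; concretely this is the assertion that $J_{X_S}^P(\un{q^S})$ (a single fine compactified Jacobian of the nodal curve $X_S$) has its own stratification, and the $S'\setminus S$ boundary stratum of $J_{X_S}^P(\un{q^S})$ maps under $(\nu_S)_*$ to $J_{X,S'}^P(\un q)$ and is in the closure of $J_{X_S}^P(\un{q^S})_{\mathrm{sm}}$---the latter because $J_{X_S}^P(\un{q^S})$ is irreducible-components-wise the closure of its line-bundle locus (each component is a compactification of a torsor under $J(X_S)$, and line bundles are dense in it by general properties of Esteves's compactified Jacobians). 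Combining: $\ov{J_{X,S}^P(\un q)}=\ov{(\nu_S)_*\big(J_{X_S}^P(\un{q^S})_{\mathrm{sm}}\big)}=(\nu_S)_*\big(\ov{J_{X_S}^P(\un{q^S})_{\mathrm{sm}}}\big)=(\nu_S)_*\big(J_{X_S}^P(\un{q^S})\big)$, using in the middle equality that $(\nu_S)_*$ on the whole compactified Jacobian is a closed immersion (or at least proper and injective), which simultaneously proves the second isomorphism of \eqref{strata-Jac3}.

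\textbf{Main obstacle.} The delicate point is upgrading the set-theoretic bijection of Step~1 to a scheme-theoretic isomorphism, and similarly showing $(\nu_S)_*$ realizes the whole $J_{X_S}^P(\un{q^S})$ as the \emph{scheme-theoretic} closure $\ov{J_{X,S}^P(\un q)}$ (with its reduced structure), rather than just a bijective map onto it. I expect this to require a careful comparison of the miniversal deformation space of a torsion-free sheaf $\I\in{\rm Tors}_S(X)$ on $X$ with that of the corresponding line bundle on $X_S$---the point being that deforming $\I$ within the locus of sheaves not-free-at-$S$ is the same as deforming the line bundle on $X_S$, which is exactly what makes $\NF^{-1}(S)$ smooth and identifies it with $J_{X_S}^P(\un{q^S})_{\mathrm{sm}}$; combined with Fact~\ref{smooth}\eqref{smooth1} and properness this pins down the scheme structure. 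All the numerical bookkeeping (matching the quasistability inequalities under $\un q\leftrightarrow\un{q^S}$, and the complexity count) is routine once the dictionary is set up; the graph theory of Section~2 does the real combinatorial work.
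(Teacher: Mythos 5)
Your strategy coincides with the paper's own: the quasistability dictionary under $(\nu_S)_*$ (your Step 1 computation, using Proposition \ref{sheaf-linebun}\eqref{sheaf-linebun3}, Lemma-Definition \ref{pola-norma} and $\delta_{Y_S}=\delta_Y-|S_e^Y|$, is exactly the one in the paper), the component count via the Section 2 combinatorics (Corollary \ref{cor-card}), and semicontinuity of $\NF$ for one inclusion in (ii). There are, however, two genuine gaps. The first is the reverse inclusion in (ii): you reduce it to the assertion that line bundles are dense in (each component of) a fine compactified Jacobian, quoted as a ``general property of Esteves's compactified Jacobians''. At this point in the paper that density is not available -- it is essentially the statement being proved (part (ii) with $S=\emptyset$), and your fallback formulation (the $S'\setminus S$ stratum of $J_{X_S}^P(\un{q^S})$ lies in the closure of its line-bundle locus) is the same statement for the curve $X_S$, so the argument is circular as written. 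The paper fills this by a concrete deformation argument: for $\I$ with $n\in \NF(\I)$, the miniversal deformation ring of $\I$ (via \cite[Lemma 3.14]{CMKV2}) yields a torsion-free rank $1$ sheaf $\I'$ generizing $\I$ with $\NF(\I')=\NF(\I)\setminus\{n\}$, and one must then check that $\I'$ is still $\un q$-$P$-quasistable; this is where the multidegree bookkeeping enters (the degree jumps by $+1$ on one branch when an external node is smoothed, formula \eqref{smoothing-node1}, and is unchanged at an internal node) together with the openness of the quasistability condition. Your ``smooth the nodes in $S'\setminus S$'' sketch omits both the source of such a one-parameter family of sheaves and the quasistability check for its general member.

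The second gap is the scheme-theoretic upgrade that you yourself flag as the main obstacle. The paper resolves it not by tangent-space comparisons but by observing that $(\nu_S)_*$ is a closed embedding, because it is induced by a fully faithful functor between the categories of torsion-free rank $1$ sheaves on $X_S$ and on $X$ (\cite[Lemma 3.4]{EGK}); after that, only bijectivity on geometric points is needed, which your Step 1 supplies. Be careful with your ``more cheaply'' variant: a bijective morphism from a smooth variety onto a reduced scheme need not be an isomorphism (the normalization of a cuspidal curve is bijective), so unramifiedness really must be proved, and doing so amounts to the same deformation-theoretic input as the full-faithfulness argument. Finally, for the second isomorphism in (iii) you need $(\nu_S)_*$ to be defined on all of $J_{X_S}^P(\un{q^S})$, i.e.\ that the pushforward of an arbitrary $\un{q^S}$-$P$-quasistable torsion-free sheaf is $\un q$-$P$-quasistable; the paper obtains this by factoring through further partial normalizations (diagram \eqref{push-2norma}), a reduction your outline uses implicitly and should state explicitly.
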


\begin{remark}
It is easy to see that if $\un q$ is the canonical polarization of degree $d$ (see Remark \ref{inequ}\eqref{inequ2}) then $\un{q^S}$ is again a canonical polarization
for every $S\subseteq \Xsing$ if and only if $d=g-1$.
This explains why the stratification found by Caporaso for $\ov{P_X^{g-1}}$ in \cite[Sec. 4.1]{captheta} can work only in degree $d=g-1$. In the general case, even if
one is interested only in coarse or fine compactified Jacobians with respect to canonical polarizations, non-canonical polarizations naturally show-up in
the above stratification.
\end{remark}

%we shall refer to the points in $\pXb \smallsetminus \pX$
%as the ``boundary points of $\pXb$".
%To describe them precisely  we  need some simple preliminaries.

Before proving the theorem,  we need to analyze the multidegrees of the sheaves $\I$ belonging to the strata $\JPS$.

\begin{nota}{\emph{Multidegrees of sheaves $\I\in J_X^P(\un{q})$}}

For a torsion-free, rank $1$ sheaf $\I$ on $X$, the subset $NF(\I)\subset \Xsing$ where $\I$ is not free (see \ref{notsheaves})
 admits a partition
\[NF(\I)=NF_e(\I)\coprod NF_i(\I),\]
where $\NF_e(\I):=\NF(\I)\cap X_{\rm ext}$ and $\NF_i(\I):=\NF(\I)\cap X_{\rm int}$.

Given a sheaf $\I$ on $X$, we define its multidegree $\un{\deg(\I)}$ as the $0$-cochain
in $C^0(\Gamma_X,\Z)$ such that $\un{\deg(\I)}_{v}:=\deg_{X[v]}(\I)$ for every $v\in V(\Gamma_X)$.
Given a subset $W\subset V(\Gamma_X)$, we define
\[\un{\deg(\I)}_W:=
\sum_{v\in V(\Gamma_{X[W]})} \un{\deg(\I)}_{v}= \sum_{v\in V(\Gamma_{X[W]})} \deg_{X[v]}(\I).\]
In what follows we analyze the difference between $\deg_{X[W]}(\I)$ and $\un{\deg(\I)}_W$
where $\I$ is a torsion-free, rank $1$ sheaf on $X$.

\begin{lemma}\label{add-deg}
Let $Y$ be a subcurve of $X$ and let $Y_1,\cdots, Y_m$ be the irreducible components of
$Y$. Then
\[\deg_{Y}(\I)=\sum_{i=1}^m \deg_{Y_i}(\I)+ |NF_e(\I)\cap X\setminus Y^c|. \]
\end{lemma}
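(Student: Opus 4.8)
The plan is to compute $\deg_Y(\I)$ by relating $\I_Y$, the restriction of $\I$ to $Y$ modulo torsion, to the sheaves $\I_{Y_i}$ on the irreducible components. First I would set up the local picture at each node $n$ of $X$. A torsion-free rank $1$ sheaf $\I$ is locally free at $n$ unless $n \in \NF(\I)$, in which case the completed stalk is the maximal ideal $\mathfrak m_n$ of the local ring at $n$. The key observation is that the restriction-modulo-torsion operation behaves additively on $Y$ except for a correction coming precisely at those nodes where two \emph{distinct} irreducible components of $Y$ meet \emph{and} $\I$ fails to be free: at such a node, the quotient of $\I_{|Y}$ by its torsion subsheaf differs from $\bigoplus_i \I_{Y_i}$ by a skyscraper sheaf of length $1$, which lowers the Euler characteristic by $1$ and hence raises the degree by $1$ when we pass to the decomposed picture — equivalently, $\deg_Y(\I)$ exceeds $\sum_i \deg_{Y_i}(\I)$ by the number of such nodes.

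Concretely, I would proceed as follows. Write $\tX_Y = \coprod_i Y_i$ for the disjoint union of the components of $Y$, with the natural finite map $\nu: \tX_Y \to Y$, which is an isomorphism away from the nodes of $Y$ that are external nodes of $Y$ (i.e. lie in $Y_i \cap Y_j$ for $i \neq j$). The sheaf $\I_Y$ on $Y$ and its pullback to $\tX_Y$ are related: one checks locally at each external node $n$ of $Y$ whether $\I$ is locally free there. If it is, then $\I_Y$ near $n$ is a line bundle and $\nu^* \I_Y / (\text{torsion})$ glues back with a length-$1$ cokernel, contributing the usual $\delta$-invariant correction. If $\I$ is \emph{not} free at $n$, then $\I_Y$ near $n$ is (up to the line bundle twist) the structure sheaf of $\tX_Y$ pushed forward — i.e. the normalization already "separates" the branches — so there is no such correction at $n$. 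Summing the local contributions and using $\deg_Y(\I) = \chi(\I_Y) - \chi(\O_Y)$ together with $\chi(\O_Y) = \sum_i \chi(\O_{Y_i}) - \#\{\text{external nodes of } Y\}$, the external-node corrections from the structure sheaves and from $\I_Y$ cancel except at the nodes in $\NF_e(\I)$ lying on $Y$ but not on $Y^c$, i.e. at $\NF_e(\I) \cap (X \setminus Y^c)$. This yields exactly the claimed formula.

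Alternatively — and perhaps more cleanly — I would invoke Proposition \ref{sheaf-linebun}: writing $S := \NF(\I)$, there is a line bundle $M$ on the partial normalization $X_S$ with $(\nu_S)_* M = \I$, and part \eqref{sheaf-linebun3} of that proposition already gives $\deg_Y(\I) = \deg_{Y_S} M + |S_i^Y|$ where $S_i^Y = S \cap (Y \setminus Y^c)$. Applying the same identity to each irreducible component $Y_i$ (noting that for an irreducible curve the internal-node term is what remains, since a component has no "external to itself" structure), and then using that on $X_S$, which is a disjoint-at-$S$ curve, the degree is additive over components up to the external nodes of $X_S$ lying on $Y_S$, one reduces the claim to a bookkeeping identity among the sets $S_i^Y$, $S_i^{Y_i}$, and the external nodes of $Y$. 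The main obstacle — really the only subtle point — is getting the combinatorics of which nodes get "counted" exactly right: one must carefully distinguish nodes of $Y$ that are internal to a single $Y_i$ from nodes where two components of $Y$ meet, and among the latter, separate those where $\I$ is free from those where it is not, keeping track of the fact that a node on $Y \cap Y^c$ contributes to neither side of the desired equation. Once the local analysis at a single node is pinned down, the global formula follows by summation. I would therefore organize the write-up as: (1) a local computation at one node, tabulating the four cases (internal/external $\times$ free/not free); (2) the global sum, citing Proposition \ref{sheaf-linebun}\eqref{sheaf-linebun3} to handle the line-bundle-on-$X_S$ reduction; (3) the final cancellation identifying the leftover term as $|\NF_e(\I) \cap (X \setminus Y^c)|$.
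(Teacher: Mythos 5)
Your proposal is correct, and your second (``cleaner'') route is essentially the paper's own proof: the paper also writes $\I=(\nu_S)_*M$ for a line bundle $M$ on $X_S$ with $S=\NF(\I)$ and feeds everything through Proposition \ref{sheaf-linebun}\eqref{sheaf-linebun3}. The only organizational difference is that the paper first establishes a two-subcurve additivity formula, $\deg_{Y\cup Z}(\I)=\deg_Y(\I)+\deg_Z(\I)+|\NF(\I)\cap Y\cap Z|$ for $Y,Z$ without common components, and then runs an induction on the number $m$ of components of $Y$, whereas you sum over all components at once; the combinatorial content is the same either way, namely the additivity of $\deg M$ over the components of $Y_S$ together with the identity $|S\cap(Y\setminus Y^c)|=\sum_i|S\cap(Y_i\setminus Y_i^c)|+|S_e\cap(Y\setminus Y^c)|$. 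Your first, purely local route is genuinely different in flavor and also sound: the key local facts --- that at a free external node of $Y$ the comparison map $\I_Y\to \I_{Y_i}\oplus\I_{Y_j}$ has length-one cokernel, while at a non-free node it is locally an isomorphism since $\mathfrak{m}_n\cong xk[[x]]\oplus yk[[y]]$ already splits along the branches --- combined with $\chi(\O_Y)=\sum_i\chi(\O_{Y_i})-|\{\text{external nodes of } Y\}|$, yield exactly the claimed excess $|\NF_e(\I)\cap (X\setminus Y^c)|$. Either write-up would be acceptable; the one small point worth making explicit (the paper's base case) is that for irreducible $Y$ the correction term vanishes because every node of $X$ lying in $Y\setminus Y^c$ is then an internal node, hence not in $\NF_e(\I)$.
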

\begin{proof}
We will first prove that if $Y$ and $Z$ are two subcurves
of $X$ without common irreducible components then
\begin{equation}\label{add-deg-2}
\deg_{Y\cup Z}(\I)=\deg_{Y}(\I)+\deg_{Z}(\I)+|NF(\I)\cap Y\cap Z|.
\end{equation}
Using Proposition \ref{sheaf-linebun}(\ref{sheaf-linebun1}), there exists a line bundle $L$ on $X_S$ where $S=\NF(\I)$ such that $\I=(\nu_S)_*(L)$.
By Proposition \ref{sheaf-linebun}(\ref{sheaf-linebun3}), we have the equalities
\begin{equation*}
\begin{sis}
& \deg_{Y\cup Z}\I=\deg_{Y_S\cup Z_S} L+|S_i^{Y\cup Z}|, \\
& \deg_{Y}\I=\deg_{Y_S} L+|S_i^{Y}|, \\
& \deg_{Z}\I=\deg_{Z_S} L+|S_i^{Z}|. \\
\end{sis}\tag{a}
\end{equation*}
Since $L$ is a line bundle, we have that
\begin{equation*}
\deg_{Y_S\cup Z_S}L=\deg L_{|Y_S\cup Z_S}=\deg L_{|Y_S}+\deg L_{|Z_S}=\deg_{Y_S}L+\deg_{Z_S}L. \tag{b}
\end{equation*}
We have already observed in (\ref{equa-S}) that
\begin{equation*}
|S_i^{Y\cup Z}|=|S_i^Y|+|S_i^Z|+|S\cap Y\cap Z|. \tag{c}
\end{equation*}
The equation (\ref{add-deg-2}) is easily proved by putting together equations (a), (b) and (c).

The proof of the lemma is now by induction on the number $m$ of irreducible components of $Y$. If $m=1$ then the formula follows from the fact
that $X\setminus Y_1^c$ contains only internal nodes. As for the induction step,
using (\ref{add-deg-2}), we can write
\begin{equation*}
\deg_Y(\I)=\deg_{Y_1\cup\cdots \cup Y_{m-1}}(\I)+\deg_{Y_m}(\I)+|NF_e(\I)\cap
(Y_1\cup\cdots\cup Y_{m-1})\cap Y_m|.
\tag{*}
\end{equation*}
By the induction hypothesis, we have that
\begin{equation*}
\deg_{Y_1\cup\cdots \cup Y_{m-1}}(\I)=\sum_{i=1}^{m-1}\deg_{Y_i}(\I)+|NF_e(\I)\cap
X\setminus (Y_1\cup\cdots\cup Y_{m-1})^c|.
\tag{**}
\end{equation*}
Since an external node in $X\setminus Y^c$ either is an external node of
$Y_1\cup\cdots\cup Y_{m-1}$ or is node at which $Y_m$ intersects
$Y_1\cup \cdots \cup Y_{m-1}$, we have that
\begin{equation*}
|NF_e(\I)\cap X\setminus Y^c|=|NF_e(\I)\cap X\setminus (Y_1\cup\cdots\cup Y_{m-1})^c|+
\tag{***}
\end{equation*}
\[+ |NF_e(\I)\cap (Y_1\cup\cdots\cup Y_{m-1})\cap Y_m)|.\]
We conclude by putting together (*), (**), (***).
\end{proof}

For every subset $S\subseteq \Xsing$, denote by   $B^P_{X,S}(\un q)$ the set of possible multidegrees of sheaves $\I\in \JPS$.
Write $S=S_e\coprod S_i$, where $S_e:=S\cap X_{\rm ext}$ and $S_i=S\cap X_{\rm{int}}$.
We need the following version of the dual graph of $X$: the \emph{loop-less} dual graph of $X$, denoted by $\w{\Gamma_X}$,
is the graph obtained from $\Gamma_X$ by removing all
the loops. In particular,
$V(\w{\Gamma_X})=V(\Gamma_X)$ while $E(\w{\Gamma_X})$ can be identified with $X_{\rm int}$.

%Denote also with $S_e$ and $S_i$ the subsets of $S$ corresponding to the external and internal nodes of $S$, respectively.
%Translating Corollary  in terms of the loop-less dual graph
%Then, by putting together the Corollaries \ref{cor-card} and \ref{add-deg}
%, we get the following
%Finally, by putting everything together~~
% we get the following

%Translating Corollary \ref{add-deg} in terms of the loop-less dual graph
%$\w{\Gamma_X}$ and using Corollary
%\ref{cor-card}, we get the following

\begin{prop}\label{multdeg-sh}
For any $S\subseteq \Xsing$ we have that
\[B^P_{X,S}(\un q)=B_{\w{\Gamma_X}\setminus S_e}^{v_P}(\un{q}).\]
In particular, the cardinality of $B^P_{X,S}(\un q)$ is equal to $c(\w{\Gamma_X}\setminus S_e)=c(\Gamma_X\setminus S)=
c(\Gamma_{X_S})$.
%where $s$ denotes the cardinality of $S$,
%and does not depend on $P$ and on $\un{q}$.
\end{prop}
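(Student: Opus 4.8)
The plan is to build a dictionary between the $\un{q}$-$P$-quasistability inequalities of Definition~\ref{sheaf-ss-qs} for a torsion-free, rank $1$ sheaf $\I$ on $X$ with $NF(\I)=S$ and the conditions of Definition~\ref{def-ss-qs} cutting out $B_{\w{\Gamma_X}\setminus S_e}^{v_P}(\un{q})$ inside $C^0(\w{\Gamma_X},\Z)$; the only feature distinguishing this from the line bundle case ($S=\emptyset$, used in the proof of Theorem~\ref{picner}) is the torsion correction supplied by Lemma~\ref{add-deg}. Concretely, fix such an $\I$, so that $NF_e(\I)=S_e$, and set $\un{d}:=\un{\deg(\I)}\in C^0(\w{\Gamma_X},\Z)$. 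For any $W\subseteq V(\Gamma_X)=V(\w{\Gamma_X})$, applying Lemma~\ref{add-deg} to the subcurve $X[W]$ gives
\[\deg_{X[W]}(\I)=\un{d}_W+|S_e\cap E(\w{\Gamma_X}[W])|,\]
since the external non-free nodes of $\I$ that lie in $X\setminus X[W]^c$ are precisely the edges of $\w{\Gamma_X}$ joining two vertices of $W$ which belong to $S$. For $W=V(\Gamma_X)$ this reads $\deg(\I)=|\un{d}|+|S_e|$, so the condition $\deg(\I)=|\un{q}|$ is equivalent to $|\un{d}|=|\un{q}|-|S_e|$, i.e.\ to condition~(\ref{ss-cond1}). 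For a proper $W$, using $\val_{\w{\Gamma_X}}(W)=\delta_{X[W]}$ and $\un{q}_W=\un{q}_{X[W]}$, the displayed identity converts the semistability inequality $\deg_{X[W]}(\I)\ge \un{q}_{X[W]}-\frac{\delta_{X[W]}}{2}$ (strict when $v_P\in W$) into exactly condition~(\ref{ss-cond2}) for $\un{d}$ on $\w{\Gamma_X}\setminus S_e$, together with the quasistability refinement at $v_P$. Reading this left to right yields $B^P_{X,S}(\un{q})\subseteq B_{\w{\Gamma_X}\setminus S_e}^{v_P}(\un{q})$.

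For the reverse inclusion I would realize each $\un{d}\in B_{\w{\Gamma_X}\setminus S_e}^{v_P}(\un{q})$ by an explicit sheaf. By Lemma~\ref{empty} the graph $\w{\Gamma_X}\setminus S_e$ — equivalently $\Gamma_X\setminus S=\Gamma_{X_S}$ — is connected, hence $X_S$ is connected. By Proposition~\ref{sheaf-linebun}(\ref{sheaf-linebun1}), every line bundle $L$ on $X_S$ gives a sheaf $\I:=(\nu_S)_*L$ with $NF(\I)=S$, and by Proposition~\ref{sheaf-linebun}(\ref{sheaf-linebun3}) its component degrees satisfy $\deg_{X[v]}(\I)=\deg_{(X[v])_S}(L)+|S_i^{X[v]}|$, where $(X[v])_S$ is the (irreducible) component of $X_S$ lying over $X[v]$ and $\sum_v|S_i^{X[v]}|=|S_i|$. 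Since every multidegree is attained by a line bundle on the connected nodal curve $X_S$, I may choose $L$ with $\deg_{(X[v])_S}(L)=\un{d}_v-|S_i^{X[v]}|$ for all $v$, so that $\un{\deg(\I)}=\un{d}$; the total degree is consistent because $|\un{d}|=|\un{q}|-|S_e|$ forces $\deg(\I)=|\un{q}|$. The dictionary of the previous paragraph, now read from right to left, shows that $\I$ is $\un{q}$-$P$-quasistable, hence simple, hence $\I\in J^P_X(\un{q})$; and $NF(\I)=S$ places $\I$ in the stratum $J^P_{X,S}(\un{q})$. Therefore $\un{d}=\un{\deg(\I)}\in B^P_{X,S}(\un{q})$, and the two sets coincide.

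The cardinality assertion then follows: Corollary~\ref{cor-card} gives $|B_{\w{\Gamma_X}\setminus S_e}^{v_P}(\un{q})|=c(\w{\Gamma_X}\setminus S_e)$, spanning trees never use loops so $c(\w{\Gamma_X}\setminus S_e)=c(\Gamma_X\setminus S)$, and $\Gamma_X\setminus S=\Gamma_{X_S}$ by the description of partial normalizations in~\ref{notnodal}. I expect the delicate step to be the construction in the second paragraph: correctly bookkeeping the two torsion corrections $|S_i^{X[v]}|$ and $|S_e\cap E(\w{\Gamma_X}[W])|$ appearing in Proposition~\ref{sheaf-linebun} and verifying that the multidegree prescribed for $L$ has the right total; the remainder is a mechanical translation between Definitions~\ref{def-ss-qs} and~\ref{sheaf-ss-qs}.
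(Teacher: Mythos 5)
Your proposal is correct and takes essentially the same route as the paper: translate the $\un q$-$P$-quasistability inequalities through Lemma \ref{add-deg} together with the identity $\delta_{X[W]}=\val_{\w{\Gamma_X}}(W)$, and then get the cardinality from Corollary \ref{cor-card} plus the facts that loops affect neither complexity nor connectivity and that $\Gamma_X\setminus S=\Gamma_{X_S}$. The only difference is that you make explicit the reverse inclusion --- realizing each $\un d\in B_{\w{\Gamma_X}\setminus S_e}^{v_P}(\un{q})$ as the multidegree of a pushforward $(\nu_S)_*L$ with the $|S_i^{X[v]}|$ corrections from Proposition \ref{sheaf-linebun}(\ref{sheaf-linebun3}) --- a step the paper leaves implicit; your bookkeeping there is accurate.
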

\begin{proof}
Consider the loop-less dual graph $\w{\Gamma_X}$ of $X$ and a sheaf $\I\in J_X^P(\un{q})$. Then, Lemma \ref{add-deg} translated in terms
of $\w{\Gamma_X}$ says that, for every $W\subset V(\Gamma_X)=V(\w{\Gamma_X})$, the multidegree $\un{\deg(\I)}$
of $\I$ satisfies:
\[\deg_{X[W]}(\I)=\un{\deg(\I)}_W+|NF_e(\I)\cap \w{\Gamma_X}[W]|.\]
In particular, $\deg(\I)=|\un{\deg(\I)}|+|NF_e(\I)|$. Using this formula together with the fact
that, for every $W\subset V(\Gamma_X)=V(\w{\Gamma_X})$, $\delta_{X[W]}=\val_{\w{\Gamma_X}}(W)$ we deduce that a torsion-free, rank $1$ sheaf $\I$
is $P$-quasistable with respect to $\un{q}$ (in the sense of Definition \ref{sheaf-ss-qs}(\ref{sheaf-qs}))
if and only if  its multidegree $\un{\deg(\I)}\in C^0(\w{\Gamma_X},\Z)$ is $v_P$-quasistable with respect to $\un{q}$
(in the sense of Definition \ref{def-ss-qs}(\ref{qs-cond})). The last assertion follows from
Corollary \ref{cor-card} together with the easy facts that the operation of
removing loops from a graph does not change its complexity and that $\Gamma\setminus S=\Gamma_{X_S}$.
% of the graph.

\end{proof}

%\begin{remark}\label{norm}
%For every subset $S\subseteq \Xsing$, denote by    $B^P_{X,S}$ the set of multidegrees of the sheaves $\I\in J_X^P(\un{d})$ such that $NF(\I)=S$ and with $X_S$ the partial normalization of $X$ at the nodes of $S$. Then if follows from Proposition \ref{multdeg-sh} that there is a bijection
%$$B^P_{X,S}\longleftrightarrow \Delta_{X_S}^{q-s}$$
%where $s:=\sharp S$.
%\end{remark}

\end{nota}

%\begin{nota}{\emph{A stratification of $J_X^P(\un q)$}}
%\label{glu}
%Recall that $\J^P_X$ parametrizes  torsion-free sheaves of rank $1$ $\I$ on $X$ which are $P$-quasistable with respect to $E$, where $E:=\E_{|X}$.

%For each subset $S\subseteq \Xsing$, denote by    $\JPS$ the subset of $\JPX$ corresponding to torsion-free sheaves which are not free at $S$.
%Each $\JPS$ is a locally closed subset of $J_X^P(\un q)$ that we endow with the reduced schematic structure.
%According to Proposition \ref{multdeg-sh}, $\JPS$ is non empty if and only if $c(\Gamma_{X_S})>0$, which happens if and only if $X_S$ is connected.
%Therefore, we have the following stratification
%\begin{equation}\label{stratjac}
%\JPX=\coprod_{\substack{S\subseteq \Xsing \\ X_S  \text{ is connected}}}\JPS.
%\end{equation}
% with $S_e^Y\cup S^Y_i$  the decomposition of $S\cap Y$ in nodes that lie in $Y\cap Y^c$ and in $Y$ only, respectively.

%\end{nota}

\begin{proof}[Proof of Theorem \ref{strata-Jac}]
Part (i): By Proposition \ref{sheaf-linebun}(\ref{sheaf-linebun1}), the subvariety of $\JPS$ consisting of sheaves with a fixed multidegree $\un d$ is
isomorphic to $\Pic^{\un d'}(X_S)$, where $\un d'$ is related to $\un d$ according to the formula of  Proposition \ref{sheaf-linebun}(\ref{sheaf-linebun3}).
Each  $\Pic^{\un d'}(X_S)$ is clearly a torsor for $J(X_S)$. We conclude by the fact that  the set $B_{X,S}^P(\un q)$ of multidegrees of sheaves belonging to $\JPS$ has cardinality
$c(\Gamma_{X_S})$ by Proposition \ref{multdeg-sh}.

Part (ii): The inclusion
\[\ov{\JPS}\subset \coprod_{S\subset S'} J^P_{X, S'}(\un q)\]
is clear since under specialization the set $\NF(\I)$ can only increase.
 In order to prove the reverse inclusion, it is enough to show that if $\I\in J_X^P(\un q)$ is such that
$n\in \NF(\I)$ then there exists a sheaf $\I'\in J_X^P(\un q)$ specializing to $\I$ and such that $\NF(\I')=\NF(\I)\setminus \{n\}$.

Suppose first that $n$ is an external node and, up to reordering the components of $X$, assume that $n\in C_1\cap C_2$.
By looking at the miniversal deformation ring of $\I$ (see e.g. \cite[Lemma 3.14]{CMKV2}),
%which coincides with the local ring of $J_X^P(\un q)$ at $\I$,
we can find a torsion free, rank $1$ sheaf $\I'$ specializing to
$\I$ with $\NF(\I')=\NF(\I)\setminus \{n\}$ and such that the multidegree of $\I'$ is related to the one of $\I$ by means of the following
\begin{equation}\label{smoothing-node1}
\deg_{C_i} \I'=\begin{cases}
\deg_{C_1} \I +1 & \text{Êif } i=1,\\
\deg_{C_i}\I  & \text{ if } i\neq 1.
\end{cases}
\end{equation}
Since the condition of being $\un q$-P-quasistable is an open condition, we get that $\I'$ is $\un q$-P-quasistable
and we are done.

%It is enough to show that $\I'$ is $\un q$-P-quasistable. Using (\ref{smoothing-node1}) and Lemma \ref{add-deg}, we get for a %$subcurve $Y\subset X$
%$$\deg_Y \I'=\begin{cases}
%\deg_Y \I +1& \text{ if }ÊC_1\subset Y \text{ and } C_2\not\subset Y,\\
%\deg_Y \I & \text{ otherwise. }
%\end{cases}$$
%These equations together with the fact that $\I$ is $\un q$-$P$-quasistable imply that $\I'$ is $\un q$-$P$-quasistable.

Suppose now that $n$ is an internal node.
By looking at the miniversal deformation ring of $\I$, we can find a torsion-free rank $1$ sheaf $\I'$ specializing to $\I$
with $\NF(\I')=\NF(\I)\setminus \{n\}$ and such that the multidegree of $\I'$ is equal to the one of $\I$.
Clearly $\I'$ is $\un q$-$P$-quasistable and we are done.

Part (iii): First of all, observe that the pushforward map $(\nu_S)_*$ is a closed embedding since 
it is induced by a functor between the categories of torsion-free rank one sheaves on $X_S$ and on $X$ which
is fully faithful, as it follows from \cite[Lemma 3.4]{EGK} (note that the result in loc. cit. extends easily from the case
of integral curves to the case of reduced curves).
\footnote{We are grateful to Eduardo Esteves for pointing out to us this argument.}
Therefore, in order to conclude the proof of part (iii), it is enough to show that the map $(\nu_S)_*$ 
induces a bijection on geometric points.

Consider first the bijection of Proposition \ref{sheaf-linebun}(\ref{sheaf-linebun1}). We claim that a line bundle $L\in \Pic(X_S)$ is
$\un{q^S}$-$P$-quasistable on $X_S$ if and only if $(\nu_S)_*L$ is $\un{q}$-$P$-quasistable on $X$. This amounts to prove
that for any  subcurve $Y\subset X$ we have
\[\deg_{Y_S}L\geq  \un{q^S}_{Y_S}-\frac{\delta_{Y_S}}{2}\Longleftrightarrow \deg_Y (\nu_S)_*L\geq \un{q}_{Y}-\frac{\delta_Y}{2},\]
and similarly with the strict inequality $>$ (since $P\in Y$ if and only if $P\in Y_S$).
This equivalence follows from the equalities
\begin{equation*}
\begin{sis}
&\deg_{Y_S} L=\deg_Y (\nu_S)_*L-| S_i^Y|, \\
& \un {q^S}_{Y_S}=\un q_Y-\frac{|S_e^Y|}{2}- |S_i^Y|, \\
& \delta_{Y_S}=\delta_Y-|S_e^Y|,\\
\end{sis}
\end{equation*}
where the first equality follows from Proposition \ref{sheaf-linebun}(\ref{sheaf-linebun3}), the second follows from the definition of $\un{q^S}$
(see Lemma-Definition \ref{pola-norma}) and the third is easily checked.
Therefore, using Fact \ref{smooth}(\ref{smooth1}), the push-forward via the normalization map $\nu_S$ induces a morphism
\begin{equation}\label{map-smooth}
(\nu_S)_*:J_{X_S}^P(\un{q^S})_{\rm sm} \to J_{X,S}^P(\un q),
\end{equation}
which is bijective on geometric points. 
%Since the variety $J^P_{X,S}(\un q)$ is smooth by Part (i), the above map (\ref{map-smooth})
%is an isomorphism. 
This proves the first isomorphism in Part \eqref{strata-Jac3}.

Let us now prove the second isomorphism of Part \eqref{strata-Jac3}. To that aim, consider  two subsets $\emptyset\subseteq S\subseteq S'\subseteq \Xsing$. We have a commutative diagram
\[\xymatrix{
X_{S'} \ar[rr]^{\nu_{S'\setminus S}}\ar[dr]_{\nu_{S'}}& & X_S\ar[dl]^{\nu_S}\\
& X &\\
}
\]
where $\nu_{S'\setminus S}$ is the partial normalization of $X_S$ at the nodes corresponding to $S'\setminus S$. By abuse of notation, we denote by     $P$
the inverse image of $P\in X$ in $X_S$ and in $X_{S'}$.  We claim that the above diagram induces,
via push-forwards, a commutative diagram
\begin{equation}\label{push-2norma}
\xymatrix{
J_{X_{S'}}^P(\un{q^{S'}})_{\rm sm} \ar[rd]_{(\nu_{S'})_*}^{\cong}\ar[rr]^{(\nu_{S'\setminus S})_*}_{\cong} & & J_{X_S, S'\setminus S}^P(\un{q^S}) \ar[dl]^{(\nu_S)_*}_{\cong} \\
& J_{X,S'}^P(\un q) &
}
\end{equation}
where all the maps are isomorphisms. Indeed, from \eqref{map-smooth} with $S$ replaced by $S'$, it follows that the map $(\nu_{S'})_*$ is an isomorphism.
Similarly, if we apply \eqref{map-smooth} with $X$ replaced by $X_S$, $S$ replaced by $S'\setminus S$ and $\un q$ replaced by $\qS$, we obtain that $(\nu_{S'\setminus S})_*$
is an isomorphism since it is easily checked that $(X_S)_{S'\setminus S}\cong X{_{S'}}$ and $\un{(\qS)^{S'\setminus S}}=\un{q^{S'}}$. Since the diagram \eqref{push-2norma} is clearly
commutative, we get that $(\nu_S)_*$ is well-defined and that it is an isomorphism.

From the fact that the map $(\nu_S)_*$ in diagram \eqref{push-2norma} is an isomorphism, using the stratification \eqref{stratjac} and the one in part \eqref{strata-Jac2}, we deduce that the natural map
\begin{equation}\label{map-sing}
(\nu_S)_*: J_{X_S}^P(\qS)=\coprod_{S\subseteq S'\subseteq \Xsing} J^P_{X_S,S'\setminus S}(\qS)
\to \coprod_{S\subseteq S'\subseteq \Xsing} J_{X,S'}^P(\un q)= \ov{J_{X,S}^P(\un q)}
\end{equation}
is bijective on geometric points, which concludes the proof.
%However, from the deformation theory of rank $1$, torsion-free sheaves on a nodal curve (see \cite{CMKV} for details), %it follows easily that $\ov{J_{X,S}^P(\un q)}$  is seminormal (see \cite{GT}), and hence \eqref{map-sing} is an
%isomorphism, q.e.d.

\end{proof}

\begin{cor}\label{cor-strat}
For the stratification in (\ref{stratjac}), it holds:
\begin{enumerate}[(i)]
\item $\JPS$ has pure codimension equal to $|S|$.
\item $\ov{\JPS}\supset  J^P_{X, S'}(\un q)$
if and only if $S\subseteq S'$.
\item The smooth locus of $\ov{\JPS}$ is equal to $\JPS$.
\end{enumerate}
\end{cor}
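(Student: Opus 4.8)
The plan is to deduce all three statements of Corollary~\ref{cor-strat} from Theorem~\ref{strata-Jac} together with elementary facts about the degree class group and complexity recorded in Section~\ref{not}.

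\textbf{Part (i).} First I would fix $S\subseteq \Xsing$ with $X_S$ connected (otherwise $\JPS=\emptyset$ by Theorem~\ref{strata-Jac}(\ref{strata-Jac1}) and there is nothing to prove). By Theorem~\ref{strata-Jac}(\ref{strata-Jac1}), $\JPS$ is a disjoint union of copies of the generalized Jacobian $J(X_S)$, hence it is smooth of pure dimension $\dim J(X_S)=g(X_S)=g-|S|$, where the last equality uses that $X_S$ is the partial normalization of $X$ at the $|S|$ nodes in $S$ (each normalized node drops the arithmetic genus by one, as already used in the proof of Proposition~\ref{sheaf-linebun}(\ref{sheaf-linebun3})). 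Since $J_X^P(\un q)$ has pure dimension $g$ (its smooth locus is the N\'eron model $\nerd$ by Theorem~\ref{picner}, which has relative dimension $g$ over $B$, so the special fiber $N_X^d$ — and hence all of $J_X^P(\un q)$, being irreducible-component-wise of the same dimension — has dimension $g$; alternatively $\JPX[\emptyset]$ is open dense and a union of copies of $J(X)$), the codimension of $\JPS$ is $g-(g-|S|)=|S|$, which is Part (i).

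\textbf{Part (ii).} This is essentially a restatement of Theorem~\ref{strata-Jac}(\ref{strata-Jac2}): from $\ov{\JPS}=\coprod_{S\subseteq S'}J^P_{X,S'}(\un q)$ one reads off immediately that $\ov{\JPS}\supseteq J^P_{X,S'}(\un q)$ whenever $S\subseteq S'$. For the converse, if $\ov{\JPS}\supseteq J^P_{X,S'}(\un q)$ then $J^P_{X,S'}(\un q)$ must be nonempty, and since the strata are disjoint, $J^P_{X,S'}(\un q)$ meets $\coprod_{S\subseteq S''}J^P_{X,S''}(\un q)$, forcing $S\subseteq S'$. One only needs to note here that $J^P_{X,S'}(\un q)\neq\emptyset$ (equivalently $X_{S'}$ connected) is what is being asserted on the right-hand side, so the equivalence is clean; I would phrase it as: $\ov{\JPS}$ contains a \emph{nonempty} stratum $J^P_{X,S'}(\un q)$ iff $S\subseteq S'$.

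\textbf{Part (iii).} By Theorem~\ref{strata-Jac}(\ref{strata-Jac3}), $\ov{\JPS}$ is isomorphic, via $(\nu_S)_*$, to the fine compactified Jacobian $J_{X_S}^P(\un{q^S})$, and under this isomorphism the open stratum $\JPS$ corresponds to $J_{X_S}^P(\un{q^S})_{\rm sm}$, i.e.\ to the locus of line bundles on $X_S$ (the first displayed isomorphism of (\ref{strata-Jac3})). Now apply Fact~\ref{smooth}(\ref{smooth1}) to the curve $X_S$: the variety $J_{X_S}^P(\un{q^S})$ is smooth exactly at the line bundles. Transporting this back through the isomorphism $(\nu_S)_*$ shows that the smooth locus of $\ov{\JPS}$ is precisely $\JPS$. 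The only mild subtlety to address is that Fact~\ref{smooth} is stated for a connected nodal curve, which applies here since $\JPS\neq\emptyset$ guarantees $X_S$ is connected.

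\textbf{Main obstacle.} None of the three parts is genuinely hard once Theorem~\ref{strata-Jac} is in hand; the only point requiring a little care is pinning down that $\dim J_X^P(\un q)=g$ cleanly in Part (i) — I would route this through Theorem~\ref{picner} (the smooth locus is the $g$-dimensional N\'eron model $\nerd$) or, more elementarily, through the open dense stratum $\JPX[\emptyset]\cong\coprod J(X)$ with $\dim J(X)=g$ — and, for Parts (ii)--(iii), to consistently use the nonemptiness criterion ``$X_{S'}$ connected'' so that the stated equivalences are not vacuously mismatched.
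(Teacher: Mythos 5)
Your proposal is correct and follows essentially the same route as the paper: all three parts are deduced directly from Theorem \ref{strata-Jac}, with Part (i) using the dimension count $\dim J(X)-\dim J(X_S)=g(X)-g(X_S)=|S|$ (your detour through Theorem \ref{picner} or the dense open stratum to pin down $\dim J_X^P(\un q)=g$ is a harmless elaboration of the same point), and Part (iii) using the isomorphisms of Theorem \ref{strata-Jac}(\ref{strata-Jac3}) together with Fact \ref{smooth}(\ref{smooth1}). No gaps; your explicit handling of the nonemptiness condition in Part (ii) is a reasonable clarification of what the paper leaves implicit.
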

\begin{proof}
Part (i) follows from Theorem \ref{strata-Jac}(\ref{strata-Jac1}) together with the equality
\[\dim J(X)-\dim J(X_S)=g(X)-g(X_S)=|S|,\]
where we used that $X_S$ is connected.

Part (ii) follows from Theorem \ref{strata-Jac}(\ref{strata-Jac2}).

Part (iii) follows from Theorem \ref{strata-Jac}(\ref{strata-Jac3}).
\end{proof}

\begin{remark}
A result similar to Corollary \ref{cor-strat} was proved by Caporaso in \cite[Thm. 6.7]{capneron} for the compactified Jacobian
$\ov{P_X^d}$ (see Remark \ref{nota-Jac}(\ref{nota-Jac3})) of a $d$-general curve $X$ in the sense of Remark \ref{can-gen}.
Indeed, by using Theorem \ref{nondeg-thm}, our Corollary \ref{cor-strat} recovers \cite[Thm. 6.7]{capneron} and extends
it to the case of $X$ weakly $d$-general in the sense of Remark \ref{can-nondeg}.
\end{remark}

\section{Fine compactified Jacobians as quotients}
\label{quotient}

\begin{nota}
\label{ladder}
Recall from \ref{notnodal} that we denote by     $\XSh$ (resp. $\hX$) the partial blowup of $X$ at
$S\subseteq \Xsing$ (resp. the total blowup of $X$) and the natural blow-down morphisms by
$\pi_S:\XSh\to X$ (resp. $\pi:\hX\to X$). Moreover, for each $S\subseteq \Xsing$,
we have a commutative diagram
\begin{equation}\label{part-fac}
\xymatrix{
\hX \ar[rr]^{\pi^S}\ar[dr]_{\pi} & & \XSh \ar[dl]^{\pi_S} \\
& X &
}
\end{equation}
where $\pi^S$ is the blow-down of all the exceptional subcurves of $\hX$ lying over the nodes
of $\Xsing\setminus S$.

Given a polarization $\un q$ on $X$, consider the polarizations $\widehat{\un{q^S}}$ (resp.
$\widehat{\un q}$) on $\XSh$ (resp. $\hX$) introduced in Lemma-Definition \ref{pola-blow}.
Given $P\in\Xsm$, we denote also with $P$ the inverse image of $P$ in $\XSh$ and in $\widehat X$,
in a slight abuse of notation.

%define a polarization $\hat{\un q}$ on $\hat X$ by
%\begin{equation}\label{pola-blowup}
%\hat{\un q}_Y:=\un q_{\pi(Y)}
%\end{equation}
%for any subcurve $Y\subset \hat{X}$. Note in particular that $\hat{\un q}_{E}=0$
%for every exceptional component $E\subset \hat{X}$ of $\pi$.

%such that
%\begin{itemize}
%\item for every exceptional component $E$ of $\hat X$, $\hat{\un q}_{E}=0$;
%\item for every subcurve $Y$ of $X$, $\hat{\un q}_{\pi^{-1}(Y)}=\un q_Y$.
%\end{itemize}

Given $S\subseteq \Xsing$, denote by     $\Jprim$ the open and closed subset of $J_{\hXS}^P(\qSh)_{\rm sm}$
consisting of all line bundles that have degree $-1$ on all the exceptional components of $\hXS$.
Note that $\Jprim$ may be empty for some $S\subseteq \Xsing$.

\begin{thm}\label{quot-blow}
\noindent
\begin{enumerate}[(i)]
\item \label{quot-blow1}
For any $S\subseteq \Xsing$, $\Jprim$ is a disjoint union of $c(\Gamma_{X_S})$ torsors for the
generalized Jacobian $J(\hXS)\cong J(\hX)\cong J(X)$.
In particular $\Jprim$ is non-empty if and only if $X_S$ is connected.
\item \label{quot-blow2}
The pull-back via the map $\pi^S$ induces an open and closed embedding
\begin{equation}\label{emb-prim}
(\pi^S)^*: \Jprim \hookrightarrow J^P_{\hX}(\qh)_{\rm sm}.
\end{equation}
Via the above identification, $J^P_{\hX}(\qh)_{\rm sm}$ decomposes into a disjoint union of
open and closed strata
\begin{equation}\label{strat-blow}
J^P_{\hX}(\qh)_{\rm sm}=\coprod_{\stackrel{\emptyset\subseteq S\subseteq \Xsing}{}
%{X_S \text{ is connected }}
} \Jprim.
\end{equation}
\item \label{quot-blow3} The push-forward along the map $\pi$ induces a surjective morphism
\[\pi_*:J^P_{\hX}(\qh)_{\rm sm} \twoheadrightarrow J^P_{X}(\un q),\]
which is compatible with the stratifications \eqref{stratjac} and \eqref{strat-blow} in the sense
that it induces a cartesian diagram
\[
\xymatrix{
\Jprim \ar@{^{(}->}[r]^>>>>{{(\pi^S)}^*} \ar@{->>}[d]_{(\pi_S)_*}& J^P_{\hX}(\qh)_{\rm sm} \ar@{->>}[d]^{\pi_*}\\
J_{X,S}^P(\un q) \ar@{^{(}->}[r] & J_X^P(\un q)
}
\]
Moreover, the map $(\pi_S)_*$ on the left hand-side of the above diagram is given by taking a quotient by the algebraic
torus $\Gm^{|S|}$ of dimension $|S|$.
%$$(\pi_S)_*:\Jprim \twoheadrightarrow J_{X,S}^P(\un q)$$
%is given by a quotient for a torus $\Gm^{|S|}$ of dimension $|S|$.
\end{enumerate}
\end{thm}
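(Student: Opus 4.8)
The plan is to deduce the three parts from Theorem~\ref{strata-Jac}, the description of torsion-free rank-one sheaves in Proposition~\ref{sheaf-linebun}, and the combinatorial count of Corollary~\ref{cor-card}. The technical core, which I would prove first, is a \emph{quasistability dictionary}: writing $\pi^S\colon\hX\to\XSh$ for the blow-down of the exceptional components over $\Xsing\setminus S$ and $\pi_S\colon\XSh\to X$ for the blow-down of those over $S$, I claim that for a line bundle $L$ on $\XSh$ of degree $|\un q|$ that is primitive over $S$ (degree $-1$ on each exceptional component of $\pi_S$), the conditions that $L$ be $\qSh$-$P$-quasistable, that $(\pi^S)^*L$ be $\qh$-$P$-quasistable, and that $(\pi_S)_*L$ be $\un q$-$P$-quasistable are all equivalent, and that in the third case $(\pi_S)_*L$ automatically has degree $|\un q|$. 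The proof compares the inequalities of Definition~\ref{sheaf-ss-qs} for subcurves of the three curves, using the multidegree formulas of Proposition~\ref{sheaf-linebun}(iii) and Lemma~\ref{add-deg}, the additivity of polarizations, and the definitions of $\qSh$ and $\qh$ (Lemma-Definition~\ref{pola-blow}): every subcurve of $\XSh$ or of $\hX$ is the union of a subcurve coming from $X$ with a set of exceptional components, and since each exceptional component is a $\P^1$ meeting the rest in two nodes and carrying degree $-1$ or $0$, adding or removing it shifts $\deg$, $\delta$ and the polarization by matching amounts, so the inequalities indexed by the subcurves lying over a fixed subcurve of $X$ collapse to the single one on $X$. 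For $\pi^S$ this is straightforward, because $\qh$ vanishes on the contracted exceptional components, on which a pullback line bundle has degree $0$; the comparison through $\pi_S$ is the exact analogue of the computation in the proof of Theorem~\ref{strata-Jac}(iii) for the normalization map $\nu_S$. Establishing this dictionary is the main obstacle, though it is bookkeeping rather than a conceptual difficulty.

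Granting the dictionary, part~\eqref{quot-blow1} follows. By Fact~\ref{smooth}(i), $J^P_{\XSh}(\qSh)_{\rm sm}$ is the set of $\qSh$-$P$-quasistable line bundles on $\XSh$, and $\Jprim$ is the open and closed locus of those primitive over $S$. By the dictionary and Theorem~\ref{strata-Jac}(iii), $(\pi_S)_*$ sends $\Jprim$ onto $J^P_{X,S}(\un q)$, which by Theorem~\ref{strata-Jac}(i) is a disjoint union of $c(\Gamma_{X_S})$ torsors for $J(X_S)$; by Proposition~\ref{sheaf-linebun}(i) the fibres of $(\pi_S)_*\colon\Jprim\to J^P_{X,S}(\un q)$ are torsors under the kernel of the induced surjection $J(\XSh)\twoheadrightarrow J(X_S)$, and when $X_S$ is connected this kernel is a torus isomorphic to $\Gm^{|S|}$, since $\dim J(\XSh)-\dim J(X_S)=g(\XSh)-g(X_S)=|S|$ and the surjection is the identity on abelian parts. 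Hence $(\pi_S)_*\colon\Jprim\to J^P_{X,S}(\un q)$ is a quotient by $\Gm^{|S|}$ --- this is already the last assertion of part~\eqref{quot-blow3} --- and $\Jprim$ is a disjoint union of $c(\Gamma_{X_S})$ torsors for the group $J(\XSh)$, the $\Gm^{|S|}$-extension of $J(X_S)$. Finally $J(\XSh)\cong J(X)\cong J(\hX)$, the pull-backs $\pi_S^*$ and $\pi^*$ being isomorphisms on generalized Jacobians (cf.\ \eqref{diag-Jac}); and $\Jprim\neq\emptyset$ iff $J^P_{X,S}(\un q)\neq\emptyset$ iff $\Gamma_{X_S}$, equivalently $X_S$, is connected, by Corollary~\ref{cor-card}.

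For part~\eqref{quot-blow2}: pulling back a line bundle along $\pi^S$ produces a line bundle on $\hX$ of degree $-1$ on the exceptional components over $S$ and $0$ on those over $\Xsing\setminus S$; conversely such a line bundle on $\hX$ is trivial on the $\P^1$'s contracted by $\pi^S$, so $(\pi^S)_*(\pi^S)^*=\mathrm{id}$ and $(\pi^S)^*(\pi^S)_*\cong\mathrm{id}$, and both $(\pi^S)^*$ and $(\pi^S)_*$ carry flat families of line bundles to flat families of line bundles (as $R^1(\pi^S)_*$ vanishes on these). Thus $(\pi^S)^*$ is a morphism with inverse morphism $(\pi^S)_*$, identifying $\Pic(\XSh)_{\rm prim}$ with the locus in $\Pic(\hX)$ cut out by those degree conditions; by the dictionary it restricts to an isomorphism of $\Jprim$ onto the open and closed subset $\{M\in J^P_{\hX}(\qh)_{\rm sm}:\deg_{E_n}M=-1\text{ for }n\in S\text{ and }=0\text{ otherwise}\}$, which is the embedding \eqref{emb-prim}. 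For any $M\in J^P_{\hX}(\qh)_{\rm sm}$ the $\qh$-$P$-quasistability inequality applied to each $E_n$ and to $E_n^c$ (with $P\notin E_n$, so the one for $E_n^c$ is strict) forces $\deg_{E_n}M\in\{-1,0\}$, so $M$ lies in exactly one of these loci, namely for $S=\{n:\deg_{E_n}M=-1\}$; this gives the disjoint decomposition \eqref{strat-blow}. (As a numerical check, $c(\Gamma_{\hX})=\sum_{S\subseteq\Xsing}c(\Gamma_{X_S})$ by Fact~\ref{compl-blow}, which is the component count of both sides of \eqref{strat-blow} once part~\eqref{quot-blow1} is known.)

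Part~\eqref{quot-blow3} is then assembled. The push-forward of the universal line bundle along $\pi$ is flat over the base, so $\pi_*$ is a morphism $J^P_{\hX}(\qh)_{\rm sm}\to J^P_X(\un q)$: indeed for $M$ as above with $S=\{n:\deg_{E_n}M=-1\}$ one has $\pi_*M=(\pi_S)_*(\pi^S)_*M\in J^P_{X,S}(\un q)$ by the dictionary, with $\NF(\pi_*M)=S$. Hence $\pi_*^{-1}\bigl(J^P_{X,S}(\un q)\bigr)$ is exactly the stratum $(\pi^S)^*(\Jprim)$ of \eqref{strat-blow}, the square in the statement commutes since $\pi\circ\pi^S=\pi_S$ gives $\pi_*\circ(\pi^S)^*=(\pi_S)_*$, and therefore it is cartesian; the left-hand vertical map $(\pi_S)_*$ is the quotient by $\Gm^{|S|}$ found in part~\eqref{quot-blow1}. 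Surjectivity of $\pi_*$ follows from the stratification \eqref{stratjac}: each $J^P_{X,S}(\un q)$ is the image of $\Jprim$ under the surjection $(\pi_S)_*$, the two being non-empty simultaneously. This completes the argument.
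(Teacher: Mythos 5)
Your proposal is correct in substance, but it proves the theorem by a genuinely different mechanism than the paper, and the difference is worth spelling out. The paper only ever verifies the ``easy'' directions of your dictionary (that $(\pi^S)^*$ of a $\qSh$-$P$-quasistable bundle is $\qh$-$P$-quasistable, and that the map $\Jprim\to J^P_{X_S}(\un{q^S})_{\rm sm}$ is well defined); it never proves the converse implications directly. Instead, the surjectivity statements that you extract from the converse directions (that every quasistable sheaf with $\NF=S$ lifts to a quasistable primitive bundle on $\XSh$, hence the count in (i), the completeness of the decomposition \eqref{strat-blow}, and the surjectivity of $\pi_*$) are obtained in the paper by a global counting argument: one shows $n_S\le c(\Gamma_{X_S})$ for the number of components of $\Jprim$ (multidegrees on $\XSh$ are determined by their restriction to $X_S$), then invokes Theorem \ref{picner} to identify the total number of components of $J^P_{\hX}(\qh)_{\rm sm}$ with $c(\Gamma_{\hX})$, and Fact \ref{compl-blow} to force $\sum_S n_S=\sum_S c(\Gamma_{X_S})$, whence equality for every $S$. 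So what you treat as a consistency check at the end of your part (ii) is precisely the engine of the paper's proof, and conversely. Your route is viable --- I checked that the converse inequalities do close up --- but be aware that the ``bookkeeping'' is the real content and has a trap: the line bundle on $X_S$ corresponding to a primitive $L$ is not $i_S^*L$ but its twist down by the divisor $D_S$ of the $2|S|$ points over $S$, i.e. $(\pi_S)_*L=(\nu_S)_*\bigl(i_S^*L(-D_S)\bigr)$, so $\deg_{Y_S}$ of the relevant bundle differs from $\deg_{Y_S}(i_S^*L)$ by $2|S_i^Y|+|S_e^Y|$; with the naive identification even the total degrees fail to match, and the subcurve inequalities on $\XSh$ do not ``shift by matching amounts'' --- for $Z=Y_S\cup E_T$ the inequality follows from the one for $Y$ on $X$ with slack $2|(S\setminus T)\cap S_i^Y|+|(S\setminus T)\cap S_e^Y|\ge 0$, so only the extremal choices of $T$ reproduce the inequality on $X$ exactly. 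Once this is written out your argument is complete and is arguably more self-contained (it does not need Theorem \ref{picner} nor Fact \ref{compl-blow} for this theorem), whereas the paper's counting argument buys the hard surjectivity for free at the price of quoting those results; also fix the small slip $\pi\circ\pi^S=\pi_S$, which should read $\pi=\pi_S\circ\pi^S$, giving $\pi_*\circ(\pi^S)^*=(\pi_S)_*$ via $(\pi^S)_*(\pi^S)^*=\mathrm{id}$.
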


\begin{proof}
Let us start by proving Part (ii).
First of all, observe that the pull-backs via the maps of diagram \eqref{part-fac} induce
canonical isomorphisms between the  generalized Jacobians
\[\pi^*:J(X)\stackrel{\cong}{\longrightarrow} J(\XSh) \stackrel{\cong}{\longrightarrow} J(\hX),\]
so that we will freely identify them during this proof.

Let us prove that the map \eqref{emb-prim} is well-defined, that is, given
a $P$-$\qSh$-quasistable line bundle $L$ on $\XSh$, then
$(\pi^S)^*L$ is a $P$-$\qh$-quasistable line bundle on $\hX$. Clearly we have that
$\deg (\pi^S)^*L~=\deg L=|\qSh|=|\qh|.$ Moreover, if $Z$ is a subcurve of $\hX$ and we denote by
$\pi^S(Z)$ its image in $\XSh$, then it is easily checked that $\delta_Z\geq \delta_{\pi^S(Z)}$, which
implies that
\[\deg_Z (\pi^S)^*L=\deg_{\pi^S(Z)}L\geq \qSh_{\pi^S(Z)}-\frac{\delta_{\pi^S(Z)}}{2}\geq
\qh_Z-\frac{\delta_Z}{2},\]
where the first inequality is strict if $P\in \pi^S(Z)$ which happens if and only if
$P\in Z$. Hence, $(\pi^S)^*L$ is a $P$-$\qh$-quasistable.

The map \eqref{emb-prim} is equivariant with respect to the action of the generalized
Jacobians $J(\XSh)\cong J(\hX)$ and both the sides are disjoint union of torsors for
these generalized Jacobians. Therefore, $\Jprim$ is mapped via
\eqref{emb-prim} isomorphically onto a disjoint union of connected components of
$J^P_{\hX}(\qh)_{\rm sm}$. The image of $\Jprim$ inside $J^P_{\hX}(\qh)_{\rm sm}$
consists of all $P$-$\qh$-quasistable line bundles on $\hX$ that have degree $-1$ on the
exceptional components lying over the nodes belonging to $S$ and degree $0$ on the other
exceptional components.

In order to prove that the decomposition description \eqref{strat-blow} holds, it remains to show
that any line bundle $L$ on $\hX$ which is $P$-$\qh$-quasistable must have degree $-1$ or $0$
on each exceptional component $E$ of $\hX$.
Indeed, by applying (\ref{multdeg-sh1}) to $E$ and to $E^c=\ov{\hat X\setminus E}$ and using that
$\delta_E=2$, we get that $\deg_EL$ must be equal to $-1$, $0$ or $1$.
However, since $P\in E^c$, strict inequality must hold when applying (\ref{multdeg-sh1}) to  $E^c$, so $\deg_E L$
can not be equal to $1$. Part \eqref{quot-blow2} is now complete.

\underline{CLAIM: } The commutative diagram (\ref{diag-S}) induces a commutative diagram
\begin{equation}\label{diag-norm-blow}
\xymatrix{
J_{X_S}^P(\qS)_{\rm sm} \ar[rd]^{\cong}_{(\nu_S)_*} & & \Jprim \ar@{->>}[dl]^{(\pi_S)_*} \ar@{->>}[ll]_{i_S^*} \\
& J_{X,S}^P(\un q) &
}
\end{equation}
where $(\nu_S)_*$ is an isomorphism  and the maps $i_S^*$ and
$(\pi_S)_*$ are surjective. The fact that the map $(\nu_S)_*$ is well-defined and is an isomorphism is proved in Theorem \ref{strata-Jac}\eqref{strata-Jac3}.
Therefore, the commutativity of the diagram, together with the fact that it  is well-defined, will follow from  Proposition \ref{sheaf-linebun}\eqref{sheaf-linebun1}
if we show that $i_S^*$ is well-defined, i.e. if $L$ is a $P$-$\qSh$-quasistable line bundle on $\XSh$ having degree $-1$ on each exceptional component of $\XSh$
then  $i_S^*(L)$ is a $P$-$\qS$-quasistable line bundle on $X_S$.
Indeed, we have that
\[\deg i_S^*(L)=\deg L- |S|=|\qSh|-|S|=|\un q|-|S|=|\qS|.\]
Moreover, for any subcurve $Y_S\subseteq X_S$, it is easily checked that  (in the notations of Lemma-Definition \ref{pola-norma})
\[
\begin{sis}
&\deg_{Y_S} i_S^*(L)=\deg_{i_S(Y_S)}L, \\
& \un{q^S}_{Y_S}= \un{q}_Y-\frac{|S_e^Y|}{2}-|S_i^Y|= \qSh_{i_S(Y_S)}-\frac{|S_e^Y|}{2}-|S_i^Y|, \\
& \delta_{Y_S}=\delta_Y-|S_e^Y|= \delta_{i_S(Y_S)}-2|S_i^Y|-|S_e^Y|. \\
\end{sis}
\]
Using the above relations, it turns out that the inequality \eqref{multdeg-sh1} for the subcurve $Y_S\subseteq X_S$ and the line bundle $i_S^*L$ follows
form the same inequality \eqref{multdeg-sh1} applied to the subcurve $i_S(Y_S)\subseteq \XSh$ and the line bundle $L$. Hence $i_S^*$ is well-defined.

In order to conclude the proof of the claim, it remains to prove that the map $i_S^*$ is surjective.
Clearly $J_{X_S}^P(\qS)_{\rm sm}$ is a disjoint union of torsors for $J(X_S)$ of the form $\Pic^{\un d'}(X_S)$
for some suitable multidegrees $\un d'$; the number of such components  is $c(\Gamma_{X_S})$ by Theorem \ref{strata-Jac}.
Similarly, $\Jprim$ is a disjoint union of torsors for $J(\XSh)$ of the form $\Pic^{\un d}(\XSh)$ for some suitable multidegrees $\un d$ on $\XSh$;
call $n_S$ the number of such components.
It is clear that the map $i_S^*$ is equivariant with respect to the actions of $J(X_S)$ and
$J(\XSh)$ and of the natural surjective map
\begin{equation}\label{quot-gen-Jac}
J(\XSh)\twoheadrightarrow J(X_S).
\end{equation}
This implies that each connected component $\Pic^{\un d}(\XSh)$ of $\Jprim$ is sent surjectively onto the connected component $\Pic^{\un d_{X_S}}(X_S)$
of $J_{X_S}^P(\qS)_{\rm sm}$, where $\un d_{X_S}$ is the restriction of the multidegree $\un d$ to $X_S$. Since $\un d$ has degree $-1$ on each exceptional
component of $\XSh$, the multidegree $\un d$ is completely determined by its restriction $\un d_{X_S}$. This means that different components of $\Jprim$ are sent
to different components of $J_{X_S}^P(\qS)$. In particular, we get that
\begin{equation*}
n_S\leq c(\Gamma_{X_S}). \tag{*}
\end{equation*}

Let us now show that $n_S=c(\Gamma_{X_S})$, which will conclude the proof of the  Claim and also the proof of Part \eqref{quot-blow1}.
By Theorem \ref{picner} and Fact \ref{expl-Neron}, it follows that the number of connected components of $J_{\hX}^P(\qh)_{\rm sm}$ is equal to $c(\Gamma_{\hX})$.
Using the decomposition \eqref{strat-blow} and the inequality (*), we get that
\begin{equation*}
c(\Gamma_{\hX})=\sum_{\emptyset \subseteq S\subseteq \Xsing} n_S\leq \sum_{\emptyset \subseteq S\subseteq \Xsing}  c(\Gamma_{X_S}). \tag{**}
\end{equation*}
Fact \ref{compl-blow} applied to the graph $\Gamma=\Gamma_{\hX}$ and $S=E(\Gamma_X)$  gives that equality must hold in (**) and hence, a fortiori, also in (*)
for every $S\subset \Xsing$. Part (i) follows.

Finally, let us prove Part \eqref{quot-blow3}. The image of the stratum $\Jprim\subset J_{\hX}^P(\qh)_{\rm sm}$ via $\pi_*$ coincides with its image via the map $(\pi_S)_*$, which
by the above Claim, is equal to $J_{X,S}^P(\un q)$. Therefore $\pi_*$ is surjective and compatible with the filtrations \eqref{stratjac} and \eqref{strat-blow}.
For all the subsets $S\subseteq \Xsing$ such that $\Jprim\neq \emptyset$, the map $(\pi_S)_*$ is given by taking the quotient by the kernel of the surjection \eqref{quot-gen-Jac},
which is equal to $\Gm^{|S|}$ since $X_S$ is connected by Part \eqref{quot-blow1}. The proof is now complete.

%Let $\pi_*^S$ denote the restriction to $J^{P,S}_{\hat X}(\hat{\un q})_{\rm sm}$ of the natural push-forward morphism $\pi_*$ induced by $\pi: \hat X\to X$.
%Let us start by showing the second statement, i.e., that the image of  $\pi_*^S$ is contained in $J^{P}_{ X,S}({\un q})$ and that any torsion-free sheaf $\I\subseteq J^{P}_{X,S}({\un q})$ can be realized as
%the push-forward of some line-bundle $L\subseteq J^{P,S}_{\hat X,sm}(\hat{\un q})$.

%Let $L\subseteq J^{P,S}_{\hat X}(\hat{\un q})_{\rm sm}$. Then, since $\deg_EL=-1$ whenever $\pi(E)\subseteq S$ and $\deg_E(L)=0$ otherwise, we get that $\pi_*^S(L)$ is a torsion-free sheaf on $X$
%that is not free exactly at the nodes of $S$.

%Let $\hat Y\subseteq \hat X$ be a subcurve of $\hat X$ that is not contained in the exceptional locus of $\pi$. Then $\pi(\hat Y):=Y$ is a subcurve of $X$ and we have that
%$\deg_{\hat Y}L=\deg_Y\pi_*^S(L)-\sharp |S_i\cap Y|$ and that $\delta_{\hat Y}=\delta_Y-2\sharp |S_i\cap Y|$.
%Putting these two equalities together and using the definition of $\hat{\un q}$, we get that
%$\deg_{\hat Y}L\geq \hat{\un q}_{\hat Y}-\frac{\delta_{\hat Y}}{2}$ if and only if $\deg_{ Y}\pi_*^S L\geq {\un q}_Y-\frac{\delta_{ Y}}{2}$. It follows that  $\pi_*^S\subseteq J^{P}_{ X,S}({\un q})$ and also
%that $J^{P,S}_{\hat X}(\hat{\un q})_{\rm sm}$ is empty if and only if $J^P_{X,S}(\un q)$ is empty, which happens if and only if $X\setminus S$ is connected.

\end{proof}

\end{nota}

\begin{nota}{\emph{Relating one-parameter regular local smoothings of $X$ and of $\hX$}}
\label{two}

Let $f:\X \la \Spec R=B$ be a one-parameter regular local smoothing of $X$ (see \ref{notner})
and assume that $f$ admits a section $\sigma$.

Then, as shown in \cite[Sec. 8.4]{capneron}, there exists a one-parameter regular local smoothing $\wh{f}:\wh{\X}\to B_1$ of $\hX$ endowed with a section
$\wh{\sigma}$ in such a way that there is a commutative diagram
\begin{equation}\label{compa-smooth}
%\label{}
\xymatrix{
{\wh{\X}}  \ar[d]^{\wh{f}} \ar[r] &\X \ar[d]_{f} \\
B_1 \ar[r]  \ar^{\wh{\sigma}}@/^/[u] & {B} \ar_{\sigma}@/_/[u]
}
\end{equation}
which, moreover, is a cartesian diagram on the general fibers of $f$ and $\wh f$.

For the reader's convenience, we review Caporaso's construction.
Let $t$ be a uniformizing parameter of $R$ (i.e. a generator of the maximal ideal of $R$) and consider the degree-$2$ extension $K\hookrightarrow K_1:=K(u)$ where $u^2=t$.
Denote by $R_1$ the integral closure of $R$ inside $K_1$ so that  $B_1:=\Spec(R_1)\to B=\Spec(R)$ is a degree-$2$ ramified cover. Note that $R_1$ is a DVR having quotient field
$K_1$ and residue field $k=\ov k$.
Consider the base change
\[f_1:\X_1:=\X\times_B B_1\to B_1,\]
and let $\sigma_1:B_1\to \X_1$ be the section of $f_1$ obtained by pulling back the section $\sigma$ of $f$.
The special fiber of $\X_1$ is isomorphic to $X$ and the total space  $\X_1$ has a singularity formally equivalent to $xy=u^2$ at each of the nodes
of the special fiber. It is well-known that the relatively minimal regular model  of $f_1:\X_1 \to B_1$, call it $\wh{f}:\wh{\X}\to B_1$,
is obtained by blowing-up $\X_1$ once at each one of these singularities. Moreover, the section $\sigma_1$ of $f_1$ admits a lifting to a section $\wh{\sigma}$ of $\wh{f}$
since the image of $\sigma_1$ is contained in the smooth locus of $\X_1$.
%By construction,
%it follows that $\wh{f}:\wh{\X}\to B_1$ is a one-parameter regular local smoothing of $\hX$
It is easy to check that the general fiber of $\wh{f}$ is equal to $\wh{\X}_{K_1}=\X_K\times _K K_1$ while its special fiber is equal to $\wh{\X}_k=\hX$.
In other words, $\wh{f}:\wh{\X}\to B_1$ is a one-parameter regular local smoothing of $\hX$.
By construction, it follows that we have a commutative diagram as in \eqref{compa-smooth} which, moreover, is cartesian on the general fibers of $f$ and $\wh f$.

%Denote with $K_1$ the quotient field of $B_1$, with $k_1$ the residue field and let $\Pic_{K_1}^d$ be the generic fiber of $\Pic^d_{\hat f}\to B_1$.
\end{nota}

\begin{thm}
\label{quot} In the set up of \ref{two}, let $\un q$ be a polarization on $X$ of total degree $d=|\un q|$ and let $\un{\wh q}$ be the associated polarization on $\hX$ (see
\ref{pola-nota}).
%let $\NN(\jacdh)\la B_1$ be the N\'eron model of $\jacdh$;
Then there is a surjective $B_1$-morphism
\[
\tau_{\wh f}:J^{\hat\sigma}_{\hat f}(\hat{\un q})_{\rm sm}\cong N(\Pic^d\widehat\X_{K_1}) \la J^\sigma_f(\un q)\times_BB_1,
\]
which is an isomorphism over the general point of $B_1$.
% closed fibers are isomorphic to $(k^*)^s$ with $s=\#S$.
\end{thm}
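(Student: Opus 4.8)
The plan is to combine the description of $J^{\sigma}_f(\un q)_{\rm sm}$ and $J^{\wh\sigma}_{\wh f}(\wh{\un q})_{\rm sm}$ as Néron models (Theorem \ref{picner}) with the construction of $\wh f$ as the minimal regular model of the base-change $f_1 \colon \X_1 = \X\times_B B_1 \to B_1$ (recalled in \ref{two}), and then push the resulting map on Néron models through the quotient map $\pi_*$ of Theorem \ref{quot-blow}\eqref{quot-blow3}. First I would fix notation: write $N^d_f := J^\sigma_f(\un q)_{\rm sm}$, which by Theorem \ref{picner} is the Néron model $\NN(\Pic^d\X_K)$, and similarly $N^d_{\wh f}:= J^{\wh\sigma}_{\wh f}(\wh{\un q})_{\rm sm}\cong \NN(\Pic^d\wh\X_{K_1})$. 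Because $\wh\X_{K_1} = \X_K\times_K K_1$, pullback of line bundles along $\wh\X_{K_1}\to \X_K$ gives a $K_1$-morphism $\Pic^d\X_K\times_K K_1 \to \Pic^d\wh\X_{K_1}$, hence — using that $N^d_f\times_B B_1$ is smooth over $B_1$ with that generic fiber, and the Néron mapping property of $N^d_{\wh f}$ — a $B_1$-morphism going one way; but I actually want the map in the other direction, so instead I will use that $N^d_f \times_B B_1$ is a smooth separated $B_1$-model of $\Pic^d\X_K\times_K K_1\cong \Pic^d\wh\X_{K_1}$ and invoke the Néron property of... no: the cleaner route is to produce a map $N^d_{\wh f}\to N^d_f\times_B B_1$ directly.

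Concretely, here is the order of steps I would carry out. (1) The identity on the generic fibers $\wh\X_{K_1}\cong \X_{K_1}$ gives a canonical isomorphism $\Pic^d\wh\X_{K_1}\cong \Pic^d\X_{K_1} = \Pic^d\X_K\times_K K_1$; since $N^d_f\times_B B_1$ is the Néron model $\NN(\Pic^d\X_{K_1})$ over $B_1$ (Néron models commute with the étale-local, or here ramified-but-still-DVR, base change $B_1\to B$ — one must check $R_1$ is again a DVR with algebraically closed residue field, which is done in \ref{two}; that Néron models commute with such base changes is \cite[Ch. 7, Prop. 6.5]{BLR} or can be deduced from the Néron mapping property since $B_1\to B$ is flat with regular total space), the Néron mapping property applied to the smooth $B_1$-scheme $N^d_{\wh f}$ and its generic fiber produces a unique $B_1$-morphism $\tau_{\wh f}\colon N^d_{\wh f}\to N^d_f\times_B B_1$ extending that generic isomorphism; in particular $\tau_{\wh f}$ is an isomorphism over the generic point of $B_1$. (2) It remains to show $\tau_{\wh f}$ is surjective. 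For this I would identify $\tau_{\wh f}$ on special fibers with the map $\pi_*$ of Theorem \ref{quot-blow}\eqref{quot-blow3}. The special fiber of $N^d_{\wh f}$ is $J^P_{\hX}(\wh{\un q})_{\rm sm}$ and that of $N^d_f\times_B B_1$ is $J^P_X(\un q)_{\rm sm}$; the blow-down $\wh\X\to \X_1$ of the construction in \ref{two} restricts on special fibers to the total blow-down $\pi\colon \hX\to X$. On line bundles, the map $\tau_{\wh f}$ realized via Néron extension of "restrict to the generic fiber and use $\wh\X_{K_1}\cong \X_{K_1}$" is exactly push-forward along $\pi$ on the special fibre, because the blow-down $\wh\X\to\X_1$ is an isomorphism away from the exceptional curves and push-forward of a line bundle of degree $-1$ (resp. $0$) along a contracted $\P^1$ agrees, on the generic fibre, with the identity — this is the compatibility already exploited in the proof of Theorem \ref{quot-blow}. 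Hence $(\tau_{\wh f})_k = \pi_*\colon J^P_{\hX}(\wh{\un q})_{\rm sm}\twoheadrightarrow J^P_X(\un q)$, which is surjective by Theorem \ref{quot-blow}\eqref{quot-blow3}. (3) A $B_1$-morphism of finite type $B_1$-schemes which is surjective on the (closed) special fibre and dominant on the generic fibre is surjective (the image is constructible and contains both fibres' worth of points; more simply, $N^d_f\times_B B_1$ has only two points below which to check, the generic and the closed point of $B_1$, and $\tau_{\wh f}$ hits everything in both fibres). This gives the surjectivity claim.

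The main obstacle I expect is step (2): carefully matching the abstractly-defined Néron extension $\tau_{\wh f}$ with the concrete push-forward $\pi_*$ on special fibres, i.e. checking that the unique $B_1$-map furnished by the Néron property really does restrict to $\pi_*$ and not to some twist of it. The key point making this work is that both $\tau_{\wh f}$ and the "push-forward along the blow-down $\wh\X\to\X_1$" map are $B_1$-morphisms between the same schemes agreeing on the generic fibre, so they coincide by the uniqueness clause of the Néron mapping property; one then only has to verify that push-forward along $\wh\X\to \X_1$ induces $\pi_*$ on special fibres and the identity on generic fibres, which is a local computation at the $A_1$-singularities $xy=u^2$ blown up in \ref{two} (pushing forward $\O$ or $\O(-E)$ along the exceptional $\P^1$). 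A secondary point to be careful about is that the relevant polarization on the special fibre of $\wh f$ really is $\wh{\un q}$ in the sense of Lemma-Definition \ref{pola-blow}, so that the special fibre of $J^{\wh\sigma}_{\wh f}(\wh{\un q})$ is indeed $J^P_{\hX}(\wh{\un q})$ — but this is built into the hypothesis and into the construction of $\wh f$ as in \cite[Sec. 8.4]{capneron}.
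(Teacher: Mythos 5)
Your construction of the map in step (1) does not work, for two related reasons. First, the target of $\tau_{\wh f}$ in the theorem is $J^\sigma_f(\un q)\times_B B_1$, the whole relative fine compactified Jacobian, not its smooth locus $N^d_f\times_B B_1=J^\sigma_f(\un q)_{\rm sm}\times_B B_1$; your argument aims at the latter. This is not a cosmetic slip: your own step (2) shows the inconsistency, since $\pi_*$ of Theorem \ref{quot-blow}(iii) surjects onto all of $J^P_X(\un q)$, in particular onto the strata of sheaves that are not locally free, which lie outside the smooth locus, so no morphism into $J^\sigma_f(\un q)_{\rm sm}\times_B B_1$ can restrict to $\pi_*$ on special fibers. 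Second, the key assertion you use to invoke the N\'eron mapping property --- that $N(\Pic^d\X_K)\times_B B_1$ is the N\'eron model of $\Pic^d\X_{K_1}$ over $B_1$ --- is false in this situation: the cover $B_1\to B$ is totally ramified ($u^2=t$), and N\'eron models commute only with \'etale (formally smooth) base change, not with ramified base change. Here the N\'eron model of $\Pic^d\wh\X_{K_1}$ over $B_1$ is precisely the source $J^{\wh\sigma}_{\wh f}(\wh{\un q})_{\rm sm}$, whose special fiber has $c(\Gamma_{\hX})=\sum_{S}c(\Gamma_{X_S})$ components, in general strictly more than the $c(\Gamma_X)$ components of $N(\Pic^d\X_K)\times_B B_1$; were your claim true, $\tau_{\wh f}$ would be an isomorphism of N\'eron models over $B_1$, contradicting this count (and the theorem itself, whose map has positive-dimensional fibers over boundary points). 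The map that the N\'eron property does furnish goes the opposite way, from $J^\sigma_f(\un q)_{\rm sm}\times_B B_1$ into $N(\Pic^d\wh\X_{K_1})$. Finally, even with the correct target $J^\sigma_f(\un q)\times_B B_1$, the N\'eron mapping property cannot produce a morphism into it, since that scheme is neither smooth over $B_1$ nor a N\'eron model.

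The missing idea is to use the modular description of the target rather than a universal property of N\'eron type. This is what the paper does: since $J^\sigma_f(\un q)$ is a fine moduli space, a $B_1$-morphism $\Pic_{\wh f}^{\un d}\to J^\sigma_f(\un q)\times_B B_1$ (for each $\un d\in B^{v_0}_{\Gamma_{\wh X}}(\wh{\un q})$) is the same thing as a family of $\un q$-quasistable (with respect to the section $\sigma$) torsion-free sheaves on $\Pic_{\wh f}^{\un d}\times_B\X$; such a family is obtained as $\wh\pi_*(\mathcal P)$, where $\mathcal P$ is a Poincar\'e sheaf on $\Pic_{\wh f}^{\un d}\times_{B_1}\wh\X_1$ (its existence uses the section $\wh\sigma$) and $\wh\pi$ is the blow-down to $\Pic_{\wh f}^{\un d}\times_B\X$, and quasistability of its fibers over the closed point is exactly Theorem \ref{quot-blow}(iii). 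One then glues these morphisms along the general fibers, where each restricts to the canonical isomorphism $\Pic^d(\wh\X_{K_1})\cong\Pic^d(\X_K)\times_K K_1$, and surjectivity over the closed point is again Theorem \ref{quot-blow}(iii) --- this last point is the sound part of your step (2), but the morphism itself must be built through the fine moduli property, not through the N\'eron mapping property.
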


%\begin{remark}
%\label{}
%$\pi$ is described as a quotient by a torus action in
%\ref{torus}.
%\end{remark}

\begin{proof}
Let $P:=\widehat\sigma(k_1)\in \wh X_{\rm sm}$ and denote by     $v_0$ the vertex of the dual graph $\Gamma_{\wh X}$ of $\wh X$ corresponding to the irreducible component of $\wh X$ containing $P$. The fact that $J^{\hat\sigma}_{\hat f}(\hat{\un q})_{\rm sm}\cong N(\Pic^d\widehat\X_{K_1})$ is an immediate consequence of Theorem \ref{picner}. By (\ref{pfclaim}), we have
\[
J^{\wh\sigma}_{\wh f}(\wh{\un q})_{\rm sm} \cong \frac{\coprod _{\un d \in B_{\Gamma_{\wh X}}^{v_0}(\wh{\un q})} \Pic_{\wh f}^{\un d}}{\sim_{K_1}},
\]
where $\sim_{K_1}$ denotes the gluing along the general fibers of $\Pic_{\wh f}^{\un d}$ which are isomorphic to $\Pic^d({\wh X}_{K_1})$, where $d=|\wh{\un q}|=|\un q|$.
We will start by showing the existence of a $B_1$-morphism
\[\tau_{\wh f}^{\un d}: \Pic_{\wh f}^{\un d}\la J^\sigma_f(\un q)\times_BB_1\]
for every $\un d \in B_{\Gamma_{\wh X}}^{v_0}(\wh{\un q})$.
By the universal property of fiber products, the existence of $\tau_{\wh f}^{\un d}$ is equivalent
the existence of a morphism $\mu_{\wh f}^{\un d}:\Pic_{\wh f}^{\un d}\to J_f^\sigma(\un q)$ making the following diagram commute
\begin{equation*}
\xymatrix{
{\Pic_{\wh f}^{\un d}}\ar@/_/[ddr] \ar@/^/[drr]^{\mu_{\wh f}^{\un d}}\\
& {J_f^\sigma(\un q)\times_BB_1} \ar[r] \ar[d] & J_f^\sigma(\un q) \ar[d]\\
& B_1 \ar[r] & B
}
\end{equation*}

Now, since $J_f^\sigma(\un q)$ is a fine moduli space, such a morphism $\mu_{\wh f}^{\un d}$ is uniquely determined by a family of $(1,\sigma)$-quasistable torsion-free sheaves on $\Pic_{\wh f}^{\un d}\times_B\X$ with respect to $\un q$ (since all the singular fibers of $\Pic_{\wh f}^{\un d}\times_B\X\to \Pic_{\wh f}^{\un d}$ are isomorphic to $X$, we are slightly abusing the notation here: $\Pic_{\wh f}^{\un d}$ may very well not be a DVR): we fix our notation according to the following commutative diagram, where both the outward and the left inward diagrams are cartesian and the morphism $\hat \pi$ is the morphism induced by the inner commutativity of the diagram on the fiber product $\Pic_{\hat f}^{\un d}\times_B\X$.
%build such family of torsion-free sheaves, consider
\begin{equation*}
\xymatrix{
\Pic_{\wh f}^{\un d}\times_B\X \ar[rrrd] \ar[ddr]^{\tilde f}\\
&\Pic_{\wh f}^{\un d}\times_{B_1}\wh{\X}_1 \ar[ul]_{\wh \pi} \ar[r] \ar[d]_{\bar f}& \wh{\X}_1 \ar[r] \ar[d]_{\wh f} & \X \ar[d]_{f} \\
&\Pic_{\wh f}^{\un d} \ar[r] \ar@/_/[u]_{(1,\wh\sigma)} \ar@/^/[uul]^{(1,\sigma)} & B_1 \ar[r] \ar@/_/[u]_{\wh \sigma}& B \ar@/_/[u]_{\sigma}
}
\end{equation*}

 %to show the existence of $\mu_{\wh f}$, it is enough to build a family $\Y\to\Pic_{\wh f}^{\un d}$ of nodal curves and a torsion-free sheaf  $\I$ on $\Y$
The morphism $\wh \pi$ is then a $B$-morphism that is an isomorphism over the general point of $B$ while over the closed point of $B$ consists of blowing down all the exceptional components of the morphism $\pi:\wh X\to X$.
Since $\wh f$ is a family of projective curves with reduced and connected fibers having geometrically integral irreducible components and admitting a section $\wh \sigma$, it follows from the work of Mumford in \cite{mumford} that the relative Picard functor of $\wh f$ is representable (see \cite{fga}, Theorems 9.2.5 and 9.4.18.1).
Therefore, there exists a Poincar\'e sheaf $\mathcal P$ on $\Pic_{\wh f}^{\un d}\times_{B_1}\wh\X_1$ (see \cite{fga}, Exercise 9.4.3), i.e. a sheaf whose restriction to a fiber of $\bar f$ at a point $[C,L]$ of $\Pic_{\wh f}^{\un d}$ is isomorphic to $L$. The above description of $\wh \pi$ together with Theorem \ref{quot-blow}\eqref{quot-blow3} implies that $\I:=\wh\pi_*(\mathcal P)$ is a
family of $(1,\sigma)$-quasistable torsion-free sheaves with respect to $\un q$ over the family $\tilde f$. This yields uniquely a morphism $\mu_{\bar f}^{\un d}$ as already observed.

By construction, over the general point $\Spec K_1$ of $B_1$, the morphism $\tau_{\wh f}^{\un d}$ restricts to the natural isomorphism
\[ \Pic^d(\wh \X_{K_1})=\Pic^d(\X_K\times_K K_1)\stackrel{\cong}{\longrightarrow} \Pic^d(\X_K)\times_K K_1.\]
Therefore,  as $\un d$ varies on $B^{v_0}_{\Gamma_{\wh X}}(\wh{\un q})$, we can glue the morphisms $\tau_{\wh f}^{\un d}$ along the general fiber
to obtain the desired $B_1$-morphism $\tau_{\wh f}$. By construction the $B_1$-morphism $\tau_{\wh f}$ is an isomorphism over the general point of $B_1$
and, by Theorem \ref{quot-blow}\eqref{quot-blow3}, it is surjective over the closed point of $B_1$. This concludes the proof of the statement.

%the morphisms $\mu _{\wh f}^{\un d}$ agree on the smooth fibers,
%that is, away from the closed point of $B_1$. Therefore they glue together
%to a $B_1$-morphism $ J^{\wh\sigma}_{\wh f}(\wh{\un q})_{\rm sm} \la J^\sigma_f(\un q)\times_BB_1$ as stated.

%To prove the rest of the statement it suffices to observe that over the
%closed fiber the statement follows from Proposition \ref{quotprop} above and that over the smooth fiber the morphism is clearly a surjection as over the smooth fiber it is an isomorphism and over the
%closed fiber it coincides with the morphism of Proposition \ref{quotprop}, which is there shown to be surjective.
%$$
%\N(\Pic^d_{K_1})_{K_1}\cong \N(\Pic^d(\X_K)\times_KK_1)\cong \jacd \times _BB_1\ =()_K\times_BB_1=(\pf)_K\times
%_B\Spec K_1
%$$
%$J_{\wh f}^{\wh \sigma}(\wh q)_{sm}\cong \N(\Pic^d_{K_1})$
\end{proof}

\section{Comparing fine and coarse compactified Jacobians}

In this section, we investigate when a fine compactified Jacobian is isomorphic to its coarse compactified Jacobian.
Indeed, it turns out that the sufficient condition given by Esteves in \cite[Thm. 4.4]{est2} is also necessary (for nodal curves).

Throughout the whole section we will use the terminology introduced in paragraph  \ref{pola-nota} above.
%study when the map
%$$\Phi:J_X^P(\un q)\to U_X(\un q)$$
%from a fine compactified Jacobian to a coarse compactified Jacobian is an isomorphism.

\begin{thm}\label{nondeg-thm}
Let $X$ be a nodal curve and $\un q$ a polarization on $X$.
The following conditions are equivalent
\begin{enumerate}[(i)]
\item The polarization $\un q$ is non-degenerate;
\item For every $P\in \Xsm$ the map $\Phi:J_X^P(\un q)\to U_X(\un q)$ is an isomorphism;
\item There exists a point $P\in \Xsm$ such that the map $\Phi:J_X^P(\un q)\to U_X(\un q)$ is an isomorphism;
\item The number of irreducible components of $U_X(\un q)$ is equal to the complexity $c(\Gamma_X)$
of the dual graph $\Gamma_X$ of $X$.
\end{enumerate}
\end{thm}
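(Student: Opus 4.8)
The plan is to prove the cycle of implications (i)$\Rightarrow$(ii)$\Rightarrow$(iii)$\Rightarrow$(iv)$\Rightarrow$(i). The implication (i)$\Rightarrow$(ii) is the ``sufficiency'' half and is already due to Esteves: it follows from \cite[Thm.~4.4]{est2} once one checks, using Remark \ref{inequ}, that our notion of non-degenerate polarization (Definitions \ref{def-int} and \ref{def-pol}) coincides with the hypothesis made there for curves with locally planar singularities. The implication (ii)$\Rightarrow$(iii) is trivial, since $\Xsm\neq\emptyset$.

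For (iii)$\Rightarrow$(iv), I would first record that $J_X^P(\un q)$ always has exactly $c(\Gamma_X)$ irreducible components. By Fact \ref{smooth}\eqref{smooth1}, its smooth locus is the open subset of $\un q$-$P$-quasistable line bundles, that is the disjoint union $\coprod_{\md}\picX{\md}$ over the $\un q$-$P$-quasistable multidegrees $\md$; by Proposition \ref{card-qs} and Corollary \ref{cor-card} this index set has cardinality $c(\Gamma_X)$ (equivalently, by Theorem \ref{picner} this locus is the special fibre of $\nerd$, which has $c(\Gamma_X)$ connected components). Since each $\picX{\md}$ is irreducible of dimension $g=\dim J_X^P(\un q)$ while the complement of the smooth locus has codimension $\geq 1$ by Corollary \ref{cor-strat}(i), the closures $\overline{\picX{\md}}$ are precisely the $c(\Gamma_X)$ irreducible components of $J_X^P(\un q)$. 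Hence, if $\Phi$ is an isomorphism for some $P$, then $U_X(\un q)$ also has $c(\Gamma_X)$ irreducible components.

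The implication (iv)$\Rightarrow$(i) is the heart of the argument, and I would prove the contrapositive. Suppose $\un q$ is degenerate, so that it is integral at some proper non-spine subcurve $Y_0$. Since $Y_0$ is not a spine, some node $n\in Y_0\cap Y_0^c$ is non-separating; letting $Z_1\subseteq Y_0$ and $Z_2\subseteq Y_0^c$ be the connected components of $Y_0$ and of $Y_0^c$ meeting at $n$, both $Z_i$ are connected non-spine subcurves with $\un q_{Z_i}-\frac{\delta_{Z_i}}{2}\in\Z$, by integrality of $\un q$ at $Y_0$. As $P$ lies on at most one of $Z_1,Z_2$, I may fix a connected non-spine subcurve $Y\subsetneq X$ with $\un q_Y-\frac{\delta_Y}{2}\in\Z$, $P\notin Y$, and $\delta_Y\geq 2$ (as $\delta_Y=1$ would force the unique node of $Y\cap Y^c$ to be separating, contrary to $Y$ being non-spine). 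The key step — and the one I expect to be the main obstacle — is to produce a $\un q$-$P$-quasistable multidegree $\md$ lying on the ``wall'' cut out by $Y$, i.e. with $\md_Y=\un q_Y-\frac{\delta_Y}{2}$; I would obtain this either by perturbing $\un q$ slightly away from $Y$ into a non-degenerate polarization and tracking the quasistable representatives across the wall, or by a direct construction gluing suitable semistable multidegrees on $Y$ and on $Y^c$ and verifying the remaining basic inequalities on all of $X$.

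Granting such an $\md$, the argument concludes as follows. Every line bundle $L$ of multidegree $\md$ is strictly $\un q$-semistable with $Y$ a critically destabilizing subcurve, which forces every subcurve in a Jordan--H\"older filtration $\SS(L)$ to lie inside $Y$ or inside $Y^c$; consequently the polystable representative $\Phi(L)=\Gr(L)$ is not locally free at any node of $Y\cap Y^c$, in particular at a non-separating one, and so by Fact \ref{smooth}\eqref{smooth2} it is a singular point of $U_X(\un q)$. Thus $\overline{\Phi(\picX{\md})}$ is an irreducible subset contained in the singular locus of $U_X(\un q)$; as $U_X(\un q)$ is a reduced variety, no irreducible component of it lies in its own singular locus, so $\overline{\Phi(\picX{\md})}$ is not a component. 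Since $\Phi$ is surjective and proper and $J_X^P(\un q)$ is the union of its $c(\Gamma_X)$ components $\overline{\picX{\md'}}$, we get $U_X(\un q)=\bigcup_{\md'}\overline{\Phi(\picX{\md'})}=\bigcup_{\md'\neq\md}\overline{\Phi(\picX{\md'})}$, which is covered by at most $c(\Gamma_X)-1$ irreducible closed subsets and hence has fewer than $c(\Gamma_X)$ irreducible components, contradicting (iv). This closes the cycle.
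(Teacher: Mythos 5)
Your implications (i)$\Rightarrow$(ii)$\Rightarrow$(iii)$\Rightarrow$(iv) are fine and essentially identical to the paper's (same citation of \cite[Thm.~4.4]{est2}, same component count for $J_X^P(\un q)$). The genuine gap is in (iv)$\Rightarrow$(i): the step you yourself flag as ``the main obstacle'' --- the existence of a $\un q$-$P$-quasistable multidegree $\md$ with $\md_Y=\un q_Y-\frac{\delta_Y}{2}$ for a connected non-spine $Y$ with $P\notin Y$ --- is precisely the content that has to be proved, and neither of your two suggested routes is carried out. Worse, as set up the natural gluing route would run into trouble: you take $Y$ to be a connected component $Z_i$ of $Y_0$ or of $Y_0^c$ meeting a non-separating node, but then $Y^c$ need not be connected, and since $P\in Y^c$ the strictness required at subcurves through $P$ has to come from a quasistable piece on $Y^c$; by Corollary \ref{cor-card} (case $S=\emptyset$) quasistable multidegrees exist only on connected curves, so the gluing cannot be performed for such a $Y$ without further work. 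The ``perturb $\un q$ and track representatives across the wall'' alternative is only a slogan. A secondary unproved assertion is that equality at $Y$ forces $Y\cap Y^c\subseteq \NF(\Gr(L))$; the paper states the analogous fact with a ``clearly'', so this is a lesser issue, but in your write-up it carries real weight and deserves an argument.

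For comparison, the paper sidesteps the need for a quasistable multidegree on the wall altogether. Its Lemma \ref{exis-linebun} produces only a $\un q$-\emph{semistable} line bundle $L$ with $\deg_Z L=\un q_Z-\frac{\delta_Z}{2}$, for a non-spine $Z$ chosen so that both $Z$ and $Z^c$ are connected (exactly the point where your choice of $Y$ needs repair), and then it derives the contradiction not through the irreducible components of $J_X^P(\un q)$ but through a N\'eron-model diagram: if $U_X(\un q)$ had $c(\Gamma_X)$ components, the map $u:\UXsm\to N_X^d$ would be surjective, and two auxiliary claims (every polystable sheaf in $\UXsm$ is $S$-equivalent to a semistable line bundle; surjectivity of $u$ forces the image of $p:J_X^{ss}(\un q)_{\rm sm}\to U_X(\un q)$ to land in $\UXsm$) then contradict $[\Gr(L)]\notin\UXsm$, by Fact \ref{smooth}(ii). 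Your strategy is attractive because it avoids that machinery, and it can probably be salvaged: choose $Z$ as in Lemma \ref{exis-linebun} (both $Z$ and $Z^c$ connected, $Z$ not a spine, integrality inherited), let $Y$ be whichever of $Z,Z^c$ does not contain $P$, and glue a $\un q_{|Y}$-semistable line bundle on $Y$ with a $P$-quasistable line bundle on $Y^c$ for the shifted polarization, verifying the inequalities as in that lemma. But as written the key lemma is absent, so the contrapositive of (iv)$\Rightarrow$(i) is not established.
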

\begin{proof}
The implication (i) $\Rightarrow$ (ii) follows from \cite[Thm. 4.4]{est2}. In fact, note that, although the theorem of loc. cit.
is stated in a weaker form, namely assuming the stronger hypothesis that $\displaystyle \un q_Y-\frac{\delta_Y}{2} \not \in \Z$ for all
subcurves $Y\subsetneq X$ which are not spines, a closer look at its proof reveals that the theorem holds under the weaker
hypothesis that $\un q$ is not integral at all the subcurves $Y\subsetneq X$ which are not spines.

(ii) $\Rightarrow$ (iii) is clear.

(iii) $\Rightarrow$ (iv) follows from the fact that the number of irreducible components of $J_X^P(\un q)$ is equal to
$c(\Gamma_X)$. Indeed, according to Theorem \ref{strata-Jac}, the number of irreducible components of $J_X^P(\un q)$ is equal to the number
of irreducible components of $\JXsm$, which, according to Proposition \ref{multdeg-sh} applied
to the case $S=\emptyset$,
is equal to $c(\Gamma_X)$.

(iv) $\Rightarrow$ (i):
 Fix a one-parameter  regular local smoothing $f:\X\to B=\Spec(R)$ of $X$
(see \ref{notner}).
% i.e. a morphism $f:\X\to B$, where $\X$ is a regular surface,
%$B=\Spec(R)$ for a discrete valuation ring $R$ with special point $0$ and generic point $\eta$,  in such a way that the special fiber
%$\X_0$ is isomorphic to $X$ and the generic fiber $\X_{\eta}$ is a smooth curve.
Such a one-parameter  smoothing determines a commutative diagram:
\begin{equation}\label{diag-sm}
\xymatrix{
& N_X^d \ar^s[d] &  \\
J^{ss}_X(\un q)_{{\rm sm}} \ar@{}[drr]|{\square} \ar@{->>}^t[ur]\ar^p[r]& U_X(\un q) &\\
&J^{ss}_X(\un q)_{{\rm sm}}^0 \ar@{_{(}->}^{j'}[ul]\ar_{p'}[r]& \UXsm \ar@{{(}->}[ul]^{j}\ar_u[uul]\\
}
\end{equation}
that we now explain. $N_X^d:=N(\Pic^d\X_K)_k$ is the special fiber of the N\'eron model of $\Pic^d(\X_K)$ relative to $f$,
where $d:=|\un q|$.
$U_X(\un q)_{{\rm sm}}$ denotes the smooth locus of $U_X(\un q)$
and $j$ is its open immersion into $U_X(\un q)$.
$J^{ss}_X(\un q)_{{\rm sm}}$ denotes the variety parametrizing line bundles on $X$ that are $\un q$-semistable and $p$ is the natural map
sending a $\un q$-semistable line bundle into its class in $U_X(\un q)$, or in other words $p$ is induced by the universal
family of $\un q$-semistable line bundles over $J^{ss}_X(\un q)_{{\rm sm}}\times X$. $J^{ss}_X(\un q)_{{\rm sm}}^0$ is, by definition,  equal to
\[J^{ss}_X(\un q)_{{\rm sm}}^0:=\UXsm\times_{U_X(\un q)} J^{ss}_X(\un q)_{{\rm sm}},\]
and $j', p'$ are the induced maps.
%$p^{-1}(U_X(\un q))$,
%$j$ is its open immersion into $\Pic^{ss}(X)$ and $q$ is the restriction of $p$ to $\Pic^{ss}(X)_{\rm sm}$.
The maps $t$ and $u$ are the special fibers of two maps over $B$ induced by the N\'eron mapping property: indeed $J^{ss}_X(\un q)_{{\rm sm}}$ (resp. $\UXsm$)
is the special fiber of a $B$-scheme $\Pic_f^{ss}$ (resp. $U_f(\un q)_{\rm sm}$) smooth over $B$ whose generic fiber is $\Pic^{d}(\X_K)$.
Note also that the map $t$ is the restriction to $J^{ss}_X(\un q)_{{\rm sm}}\subset \Pic^d(X)$ of the special fiber of the map $q:\Pic_f^d\to N(\Pic^d(\X_K))$ (see (\ref{sepquot})).
From the explicit description of the map $q$ given in \ref{notner} and the fact that
every element in the degree class group $\Delta^d_X$ of $X$ can be represented by
a $\un q$-semistable line bundle on $X$ (as it follows from Proposition \ref{card-qs}),
we deduce that $t$ is surjective.
Finally, the map $s$ is induced by the fact that $U_f(\un q)$ is separated over $B$ and $N(\Pic^d(\X_K))$ is the biggest separated quotient of
the non-separated $B$-scheme $\Pic_f^{ss}$ (see \ref{notner}).

$\underline{\text{Claim 1}}$: $p'$ is surjective.

Consider a polystable sheaf $\I\in \UXsm$. According to Fact \ref{smooth}(\ref{smooth2}), the set of nodes $\NF(\I)$ at which $\I$ is not free is contained in
$\Xsep$.
The surjectivity of $p'$ is equivalent to showing that
%such that $\NF(I)$ of nodes at which $I$ is not locally invertible is contained in $\Xsep$
there exists a $\un q$-semistable line bundle $L$ in the same $S$-equivalence class of $\I$.
By decreasing induction on the cardinality of $\NF(\I)$, it is enough to show that given $n\in \NF(\I)$ there exists
$\I'\in \UXsm$ such that $\I'$ is $S$-equivalent to $\I$ and $\NF(\I')=\NF(\I)\setminus \{n\}$.
Let $T_1$ and $T_2$ be the tails attached to $n$, and set $I_i:=I_{T_i}$. Since $n$ is a separating node, it follows from \cite[Example 38]{est1}
that $\I=\I_1\oplus \I_2$. To conclude, it is enough to take a non-trivial extension
\[0\to \I_1\to \I'\to \I_2\to 0,\]
whose existence follows from \cite[Lemma 4]{est1}.

$\underline{\text{Claim 2}}$: If $u$ is surjective then $\Im p \subseteq \UXsm$.

If $u$ is surjective then, using that $p'$ is surjective by the Claim 1, we get that $t \circ j'=u\circ p'$ is surjective.
From the diagram (\ref{diag-sm}) we easily get that $\Im (s\circ t \circ j')\subseteq \UXsm$.
This, together with the surjectivity of $t\circ j'$ implies that $\Im s \subseteq \UXsm$. Since
$\Im p\subseteq \Im s$ because $t$ is surjective, we get the conclusion.

Let us now conclude the proof of the implication (iv) $\Rightarrow$ (i). Assume that the number of irreducible components of
$U_X(\un q)$ is equal to $c(\Gamma_X)$.
This means that $u$ is surjective (and hence an isomorphism). By Claim 2, we deduce that $\Im p \subseteq \UXsm $.
We claim that this implies that $\un q$ is non-degenerate. Indeed, if this were not   the case then, by Lemma \ref{exis-linebun} below, there would
exist a $\un q$-semistable line bundle $L$ such that $\displaystyle \deg_ZL=\un q_Z-\frac{\delta_Z}{2}$
for some proper subcurve $Z\subsetneq X$ which is not a spine. But then clearly $Z\cap Z^c\subset \NF(\Gr(L))\not \subset \Xsep$ which would
imply that  $p(L)=[\Gr(L)]\not\in \UXsm$ by Fact \ref{smooth}(\ref{smooth2}).
\end{proof}

\begin{lemma}\label{exis-linebun}
If a polarization $\un q$ on $X$ is not general  then there exists a subcurve $Z\subsetneq X$ with both $Z$ and $ Z^c$ connected
and a $\un q$-semistable line bundle $L$ on $X$ such that $\displaystyle \deg_Z L=\un q_Z-\frac{\delta_Z}{2}$.
Moreover, if $\un q$ is not non-degenerate, then we can choose $Z$ not to be a spine.
\end{lemma}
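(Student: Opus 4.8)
The plan is to split the statement into a combinatorial reduction (Step 1), which produces a subcurve $Z$ with $Z$ and $Z^c$ connected (and not a spine in the non-degenerate case) such that $\un q_Z-\delta_Z/2\in\Z$, and an explicit construction (Step 2) of a $\un q$-semistable line bundle $L$ with $\deg_Z L=\un q_Z-\delta_Z/2$ for any such $Z$.

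\emph{Step 1.} Since $\un q$ is not general, by Definition \ref{def-pol}(\ref{def-pol1}) there is a proper subcurve $Y\subsetneq X$ at which $\un q$ is integral, and if $\un q$ is not non-degenerate we may take $Y$ not a spine. Call \emph{pieces} the connected components of $Y$ and of $Y^c$ (pairwise disjoint connected subcurves covering $X$), and let $G$ be the graph with one vertex per piece and one edge per node of $X$ joining two distinct pieces; then $G$ is connected. The key point is a propagation of integrality: by Definition \ref{def-int}, for each piece $w$ one has $\un q_w-\delta_w/2\in\Z$, where $\delta_w=|w\cap w^c|$ equals the valence of $w$ in $G$; hence for any subset $U$ of $V(G)$, writing $\sum_{w\in U}\delta_w=2e_{\mathrm{in}}(U)+e_{\mathrm{out}}(U)$ and $\delta_{X[U]}=e_{\mathrm{out}}(U)$ (with $e_{\mathrm{in}},e_{\mathrm{out}}$ the numbers of edges of $G$ inside, resp.\ leaving, $U$), one gets $\un q_{X[U]}-\delta_{X[U]}/2\equiv\frac12 e_{\mathrm{out}}(U)-\frac12 e_{\mathrm{out}}(U)=0\pmod\Z$. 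So $\un q_{X[U]}-\delta_{X[U]}/2\in\Z$ for \emph{every} union of pieces, and it remains to choose $U$ (proper, nonempty) with $X[U]$ and $X[U^c]$ connected: take $U=\{w\}$ for a leaf $w$ of a spanning tree of $G$, giving $Z:=w$. If $\un q$ is not non-degenerate, $Y$ is not a spine, so some node $n$ joining a component of $Y$ to one of $Y^c$ is non-separating; one checks that a node joining two pieces is separating in $X$ if and only if it is a bridge of $G$ (contracting the connected pieces preserves connectedness), so $n$ is a non-bridge. Then picking a spanning tree $T$ of $G$ avoiding $n$, an edge $e$ of the $T$-path between the endpoints of $n$, and $U$ the vertex set of one component of $T\setminus e$, the subcurve $Z:=X[U]$ has $Z,Z^c$ connected, contains $n$ in $Z\cap Z^c$ hence is not a spine, and satisfies $\un q_Z-\delta_Z/2\in\Z$.

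\emph{Step 2.} Fix $Z\subsetneq X$ with $Z,Z^c$ connected and $m:=\un q_Z-\delta_Z/2\in\Z$; then $m':=\un q_{Z^c}-\delta_Z/2=|\un q|-m-\delta_Z\in\Z$. Since $Z$ and $Z^c$ are connected, the restricted polarizations $\un q_{|Z}$, $\un q_{|Z^c}$ of (\ref{res-pol}) are defined, of total degrees $m$ and $m'$, and by Corollary \ref{cor-card} (applied to the loop-less dual graphs of $Z$ and of $Z^c$, which are connected) there exist a $\un q_{|Z}$-semistable multidegree $\un d_1$ on $Z$ and a $\un q_{|Z^c}$-semistable multidegree $\un d_2$ on $Z^c$. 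Define a multidegree $\un d$ on $X$ by $\un d_{|Z}:=\un d_1$ and $\un d_{C_j}:=(\un d_2)_{C_j}+|C_j\cap Z|$ for each component $C_j\subseteq Z^c$; then $|\un d|=m+m'+\delta_Z=|\un q|$ and $\deg_Z\un d=m$. As $\mdeg\colon\Pic X\to\Z^\gamma$ is surjective for a nodal curve, there is $L\in\Pic X$ with $\mdeg L=\un d$, and since semistability of a line bundle depends only on its multidegree it suffices to see $L$ is $\un q$-semistable. For a proper subcurve $W\subsetneq X$ write $W=A\cup B$ with $A=W\cap Z$, $B=W\cap Z^c$; then $\deg_W L=\deg_A\un d_1+\deg_B\un d_2+|B\cap Z|$, and combining the semistability inequalities for $\un d_1$ on $Z$ and $\un d_2$ on $Z^c$ with the elementary identities that express $\delta_W$, $|A\cap Z^c|$ and $|B\cap Z|$ through the numbers of $Z$--$Z^c$ nodes joining the four subcurves $A,B,Z\setminus A,Z^c\setminus B$, a short case analysis (the cases $A=\emptyset$, $B=\emptyset$, $A=Z$, $B=Z^c$, and $A,B$ both proper nonempty) gives $\deg_W L\ge\un q_W-\delta_W/2$ in every case, with equality for $W=Z$.

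\emph{Main difficulty.} The only delicate point is the gluing in Step 2: the symmetric gluing of $\un q_{|Z}$- and $\un q_{|Z^c}$-semistable line bundles along the $\delta_Z$ common nodes is \emph{not} $\un q$-semistable in general, since a subcurve $W=A\cup B$ straddling the decomposition acquires a deficiency equal to the number of $Z$--$Z^c$ nodes between $A$ and $B$; the asymmetric boost $+|C_j\cap Z|$ on the components of $Z^c$ meeting $Z$ is precisely what cancels this deficiency for all straddling $W$ at once. The integrality-propagation trick in Step 1 and this boost are the two ideas; the node-count bookkeeping in the case analysis is routine.
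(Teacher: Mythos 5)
Your proof is correct and follows essentially the same route as the paper's: select a subcurve $Z$ with $Z$ and $Z^c$ connected (and not a spine in the non-degenerate case) to which integrality propagates from $Y$, then glue a $\un q_{|Z}$-semistable multidegree on $Z$ with a semistable one on $Z^c$ corrected by $|\cdot\cap Z|$ and check the inequality for straddling subcurves. Your boost of the multidegree on $Z^c$ is exactly equivalent to the paper's auxiliary polarization $\w{\un q}_R=\un q_R+\frac{|R\cap Z|}{2}$, and your spanning-tree choice of $Z$ (with the explicit integrality-propagation computation) is just a more detailed version of the paper's choice of a connected component of $Z'^c$ for $Z'$ a suitable connected component of $Y$.
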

\begin{proof}
By assumption, $\un q$ is integral at a proper subcurve $Y\subsetneq X$.
Chose a connected component of $Y$ and call it $Z'$. Set $Z$ to be one of the connected components of $Z'^c$. Clearly $Z$ and $Z^c$ are connected.

If moreover $\un q$ is not non-degenerate then there exists a subcurve $Y\subsetneq X$ as before which, moreover, is not a spine.
Then we can chose a subcurve $Z'$ as before in such a way that is it not a spine. This easily implies that $Z$ is
not a spine as well.

From the assumption that $\un q$ is integral at $Y$ and from the construction of $Z$, we deduce  that
$\displaystyle \un q_Z-\frac{\delta_Z}{2}\in \Z$ and that
 $\displaystyle \un q_{Z^c}-\frac{\delta_{Z^c}}{2}=|\un q|-\un q_Z-\frac{\delta_Z}{2}\in \Z$.

Consider the restriction $\un q_{|Z}$ of the polarization $\un q$ at $Z$ (see \ref{pola-nota}).
Since $Z$ is connected, the complexity of its dual graph $\Gamma_Z$ is at least one and therefore
Proposition \ref{multdeg-sh} implies that, for any chosen smooth point $P\in \Xsm$,  there exists a line bundle
$L_1$ on $Z$ that is $\un q_{|Z}$-P-quasistable, and in particular $\un q_{|Z}$-semistable. This means
that  for any subcurve $W_1\subset Z$ it holds:
\begin{equation}\label{bal_1}
\begin{sis}
&\deg_{Z} L_1=|\un q_{|Z}|=\un q_Z-\frac{\delta_Z}{2}, \\
&\deg_{W_1} L_1 \geq (\un q_{|Z})_{W_1}-\frac{|{W_1}\cap \ov{Z\setminus {W_1}}|}{2}=\un q_{W_1}-\frac{|{W_1}\cap Z^c|}{2} -\frac{|{W_1}\cap \ov{Z\setminus {W_1}}|}{2}=\\
&=\un q_{W_1}-\frac{\delta_{W_1}}{2}.
\end{sis}
\end{equation}
Analogously, consider the polarization $\w{\un q}$ on $Z^c$ given by
\[\w{\un q}_R:=\un q_R+\frac{|R\cap Z|}{2} \text{ for any subcurve } R\subset Z^c.\]
Since $Z^c$ is connected, there exists a line bundle $L_2$ on $Z^c$ that is $\w{\un q}$-semistable, i.e. such that
for any subcurve ${W_2}\subset Z^c$ it holds:
\begin{equation}\label{bal_2}
\begin{sis}
&\deg_{Z^c} L_2=|\w{\un q}|=\un q_{Z^c}+\frac{\delta_{Z^c}}{2}, \\
&\deg_{W_2} L_2 \geq \w{\un q}_{W_2}-\frac{|{W_2}\cap \ov{Z^c\setminus {W_2}}|}{2}=\un q_{W_2}+\frac{|{W_2}\cap Z|}{2} -\frac{|{W_2}\cap \ov{Z^c\setminus {W_2}}|}{2}= \\
&=\un q_{W_2}-\frac{\delta_{W_2}}{2}+|{W_2}\cap Z|.
\end{sis}
\end{equation}
Now let $L$ be a line bundle on $X$ such that $L_{Z}=L_{| Z}=L_1$ and $L_{Z^c}=L_{|Z^c}=L_2$ (obviously such an
$L$ exists). Using equations (\ref{bal_1}) and (\ref{bal_2}), we have that
\begin{equation}\label{totbal1}
\deg L=\deg_Z L_1+\deg_{Z^c} L_2=\un q_Z-\frac{\delta_Z}{2}+\un q_{Z^c}+\frac{\delta_{Z^c}}{2}=|\un q|.
\end{equation}
For any subcurve $W\subset X$, let $W=W_1\cup W_2$ where $W_1:=W\cap Z$ and $W_2:=W\cap Z^c$.
Using equations (\ref{bal_1}) and (\ref{bal_2}), we compute
\begin{equation}\label{totbal2}
\deg_W L=\deg_{W_1}L_1+\deg_{W_2}L_2\geq \un q_{W_1}-\frac{\delta_{W_1}}{2} +\un q_{W_2}-\frac{\delta_{W_2}}{2}+|W_2\cap Z|
\geq
 \end{equation}
\[\geq \un q_W-\frac{\delta_{W_1}}{2}-\frac{\delta_{W_2}}{2}+|W_1\cap W_2|=\un q_W-\frac{\delta_W}{2}.\]
The above equations (\ref{totbal1}) and (\ref{totbal2}) says that $L$ is $\un q$-semistable.
On the other hand,  from equation (\ref{bal_1}) we get $\displaystyle \deg_Z L=\un q_Z-\frac{\delta_Z}{2}$.

\end{proof}

\begin{nota}{\emph{Relation between non-degenerate and general polarizations}}

The aim of this subsection is to discuss the relation between a polarization $\un q$ being non-degenerate and
the stronger condition of being general (see Def. \ref{def-pol}).
We begin by describing the geometric meaning of being general.

\begin{prop}\label{gen-teo}
The following conditions are equivalent
\begin{enumerate}[(i)]
\item $\un q$ is general (see Def. \ref{def-pol}(\ref{def-pol1}));
\item Every $\un q$-semistable sheaf is $\un q$-stable, i.e. $U_X^s(\un q)=U_X(\un q)$;
\item Every $\un q$-semistable simple sheaf is $\un q$-stable, i.e. $J_X^s(\un q)=J_X^{ss}(\un q)$;
\item Every $\un q$-semistable line bundle is $\un q$-stable.
\end{enumerate}
\end{prop}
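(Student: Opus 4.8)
\textbf{Proof plan for Proposition \ref{gen-teo}.}

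The plan is to prove the cyclic chain of implications (i) $\Rightarrow$ (ii) $\Rightarrow$ (iii) $\Rightarrow$ (iv) $\Rightarrow$ (i), which is the most economical route since each step is either a routine unwinding of definitions or an easy specialization. The implications (ii) $\Rightarrow$ (iii) $\Rightarrow$ (iv) are essentially immediate: a simple sheaf is in particular a sheaf, and a line bundle is in particular a simple torsion-free rank $1$ sheaf (as recalled in \ref{notsheaves}), so the stable locus and the semistable locus coinciding on the larger class forces them to coincide on the smaller class. One should phrase each of these as: if $\I$ is $\un q$-semistable but not $\un q$-stable, then equality \eqref{multdeg-sh1} holds for some proper subcurve $Y\subsetneq X$; this is exactly the failure of stability witnessed at $Y$, and the class of sheaves in question (all sheaves, resp. simple sheaves, resp. line bundles) is nonempty for such a witnessing datum because one can produce a line bundle realizing the boundary equality.

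The two substantive implications are (i) $\Rightarrow$ (ii) and (iv) $\Rightarrow$ (i). For (iv) $\Rightarrow$ (i) I would argue by contraposition: if $\un q$ is \emph{not} general, then by Definition \ref{def-pol}\eqref{def-pol1} it is integral at some proper subcurve $Y\subsetneq X$, and then Lemma \ref{exis-linebun} (whose first assertion is precisely tailored to this hypothesis) produces a proper subcurve $Z\subsetneq X$ with $Z$ and $Z^c$ connected, together with a $\un q$-semistable line bundle $L$ with $\deg_Z L = \un q_Z - \frac{\delta_Z}{2}$. Since equality holds in \eqref{multdeg-sh1} for the proper subcurve $Z$, this $L$ is $\un q$-semistable but not $\un q$-stable, contradicting (iv). For (i) $\Rightarrow$ (ii) I would argue that if $\un q$ is general, then no proper subcurve $Y\subsetneq X$ is integral; in particular, for every proper subcurve $Y$ one has $\un q_Y - \frac{\delta_Y}{2}\notin\Z$ (apply integrality at $Y$ with $Y$ itself connected, or pass to a connected component), so the inequality \eqref{multdeg-sh1} can never be an equality for a torsion-free rank $1$ sheaf, whose degrees $\deg_Y(\I)$ are integers. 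Hence every $\un q$-semistable sheaf is automatically $\un q$-stable, i.e. $U_X^s(\un q) = U_X(\un q)$. A small point to check here is that the relevant integrality condition in Definition \ref{def-int} reduces, when evaluated at a connected subcurve, to the statement $\un q_Y - \frac{\delta_Y}{2}\in\Z$, and that "general" in Definition \ref{def-pol}\eqref{def-pol1} therefore forbids this for all proper $Y$; since $\deg_Y(\I)\in\Z$ always, equality in \eqref{multdeg-sh1} would force $\un q_Y-\frac{\delta_Y}{2}\in\Z$, a contradiction.

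The main obstacle is purely bookkeeping: making sure the definition of "integral at $Y$" (which involves all connected components of both $Y$ and $Y^c$) is correctly reconciled with the pointwise statement "$\un q_Z - \frac{\delta_Z}{2}\in \Z$" used in the stability inequalities, and verifying that Lemma \ref{exis-linebun} can be invoked under the precise hypothesis "not general" rather than "not non-degenerate" (the lemma as stated handles both, with the non-spine refinement only needed for the companion Theorem \ref{nondeg-thm}). Once these definitional matches are in place, the proof is short; the only genuinely non-trivial input is Lemma \ref{exis-linebun}, which is already available.
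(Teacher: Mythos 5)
Your proposal follows the paper's proof essentially verbatim: the same cyclic chain, with (ii)$\Rightarrow$(iii)$\Rightarrow$(iv) obtained by simply restricting to a smaller class of sheaves, (iv)$\Rightarrow$(i) by contraposition via Lemma \ref{exis-linebun}, and (i)$\Rightarrow$(ii) by arguing that for a general polarization the right-hand side of \eqref{multdeg-sh1} can never be attained. (Your aside in (ii)$\Rightarrow$(iii)$\Rightarrow$(iv) about ``producing a line bundle realizing the boundary equality'' is unnecessary---plain restriction of the statement to the smaller class suffices---but harmless.)

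The one place where you have a genuine gap is precisely the ``small point to check'' on which your (i)$\Rightarrow$(ii) rests, and that point is false as stated. For a connected subcurve $Y$, Definition \ref{def-int} requires $\un q_Z-\frac{\delta_Z}{2}\in\Z$ not only for $Z=Y$ but also for every connected component $Z$ of $Y^c$; hence ``$\un q$ not integral at $Y$'' does not imply $\un q_Y-\frac{\delta_Y}{2}\notin\Z$ when $Y^c$ is disconnected. Concretely, for a chain $C_1$--$C_2$--$C_3$ (one node on each of $C_1\cap C_2$ and $C_2\cap C_3$) with $\un q=(5/6,\,1,\,7/6)$, the polarization is general in the sense of Definition \ref{def-pol}\eqref{def-pol1}, yet $\un q_{C_2}-\frac{\delta_{C_2}}{2}=0\in\Z$, so the right-hand side of \eqref{multdeg-sh1} is an integer for the proper connected subcurve $C_2$. (To be fair, the paper's own one-line justification of (i)$\Rightarrow$(ii)---``the right hand side of \eqref{multdeg-sh1} is never an integer''---glosses over exactly the same point.) The implication is nevertheless true, but it needs a further argument, for instance: set $e(W):=\deg_W(\I)-\un q_W+\frac{\delta_W}{2}$; if a $\un q$-semistable $\I$ attains equality at a proper subcurve, then since $e$ is additive over the connected components of that subcurve one may assume equality at a connected $Y$. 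If $Y^c$ is connected, then $\un q_Y-\frac{\delta_Y}{2}=\deg_Y(\I)\in\Z$ together with $|\un q|\in\Z$ gives integrality at $Y$, contradicting generality. If $Y^c=Z_1\cup\dots\cup Z_k$ is disconnected, each $Z_i$ and each $Z_i^c$ is connected, and combining $e(Y)=0$, $e(X)=0$ and the semistability inequalities $e(Z_i^c)\geq 0$ (via the additivity formula of Lemma \ref{add-deg}) forces $e(Z_i)\in\Z$, hence $\un q_{Z_i}-\frac{\delta_{Z_i}}{2}\in\Z$ and $\un q$ is integral at $Z_i$, again contradicting generality. Without some such step, ``general'' alone does not rule out integer right-hand sides, so your contradiction in (i)$\Rightarrow$(ii) does not yet go through.
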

\begin{proof}
(i) $\Rightarrow$ (ii): If $\un q$ is general then the right hand side of the inequality (\ref{multdeg-sh1}) is never an integer.
Hence the inequality in (\ref{multdeg-sh1}), if satisfied, is always strict, from which the conclusion follows.

The implications (ii) $\Rightarrow$ (iii) $\Rightarrow $ (iv) are clear.

(iv) $\Rightarrow$ (i): If $\un q$ is not general, then Lemma \ref{exis-linebun} implies that there exists a $\un q$-semistable line bundle $L$ on $X$
that is not $\un q$-stable.

\end{proof}

\begin{remark}
The implication (i) $\Rightarrow$ (iii) was proved   in \cite[Prop. 3.5]{est2}.
\end{remark}

\begin{remark}\label{can-gen}
The canonical polarization of degree $d$ on $X$ of Rmk. \ref{inequ}(\ref{inequ2}) is general if and only $X$ is
$d$-general in the sense of \cite[Cor.-Def. 4.13]{capneron} (see also \cite[Def. 1.13]{capNtype}), as it follows easily
by comparing the definition of loc. cit. with the above Proposition \ref{gen-teo}.
\end{remark}

In the remaining of this subsection, we want to give an answer to the following

\begin{question}
How far is a non-degenerate polarization from being general?
\end{question}

Denote by $X^2$ any smoothing of $X$ at the set of separating nodes $\Xsep$ of $X$.
Given a subcurve $Z\subset X^2$, denote by     $\ov{Z}$ the subcurve  of $X$ to which
$Z$ specializes. Observe that $g_{\ov Z}=g_Z$ and $\delta_{\ov Z}=\delta_Z$.
A subcurve $Y\subset X$ is of the form $Y=\ov{Z}$ for some subcurve $Z\subset X^2$
if and only if
\begin{equation}\label{subcur}
Y\cap Y^c\cap \Xsep=\emptyset.
\end{equation}

Given a polarization $\un q$ on $X$, we define a polarization $\un q^2$ on any
smoothing $X^2$ by $\un q^2_Z:=\un q_{\ov Z}$ for any subcurve $Z\subset X^2$.
Observe that, although the smoothing $X^2$ is not unique, its combinatorial type (i.e. its weighted dual
graph) and the polarization $\un q^2$ are uniquely determined.

\begin{prop}\label{nondeg-gen}
A polarization $\un q$ on $X$ is non-degenerate if and only if, for every (or equivalently, for some)
smoothing $X^2$ of $X$ at its set of
separating nodes, the induced polarization $\un q^2$ on $X^2$ is general.
\end{prop}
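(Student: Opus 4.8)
The plan is to unwind the definitions and reduce the statement to a comparison of the "integrality" condition at subcurves of $X$ with the analogous condition at subcurves of the smoothing $X^2$. Recall that $\un q$ is non-degenerate (Definition \ref{def-pol}(\ref{def-pol2})) precisely when it is not integral (Definition \ref{def-int}) at any proper subcurve $Y\subsetneq X$ which is not a spine, while $\un q^2$ is general (Definition \ref{def-pol}(\ref{def-pol1})) precisely when it is not integral at any proper subcurve $Z\subsetneq X^2$. So the crux is the dictionary between subcurves of $X^2$ and subcurves of $X$ given by the specialization map $Z\mapsto \ov Z$, together with the two numerical facts recorded in the excerpt: $g_{\ov Z}=g_Z$ and $\delta_{\ov Z}=\delta_Z$, and the defining relation $\un q^2_Z=\un q_{\ov Z}$.

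First I would observe that, since $X^2$ is the smoothing of $X$ at $\Xsep$, a separating node of $X$ disappears in $X^2$, and the separating nodes of $X^2$ are exactly the non-separating-in-$X$ separating... more precisely, every separating node of $X^2$ specializes to a separating node of $X$, but every node of $X$ in $\Xsep$ has been smoothed, so $X^2$ has \emph{no} separating nodes that come from $\Xsep$; in fact I claim $X^2$ has no separating nodes at all, because a node of $X$ not in $\Xsep$ is by definition either internal or a non-separating external node, and smoothing the nodes of $\Xsep$ cannot create new separating nodes (smoothing only merges connected pieces). Hence every proper subcurve $Z\subsetneq X^2$ automatically satisfies the emptiness condition needed to have $Z\cap Z^c$ disjoint from "$\Xsep$" vacuously, but more to the point: the subcurves $Z\subsetneq X^2$ correspond bijectively, via $Z\mapsto \ov Z$, to the proper subcurves $Y\subsetneq X$ satisfying $Y\cap Y^c\cap\Xsep=\emptyset$ (this is exactly equation \eqref{subcur} in the excerpt). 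The content I then need is: a subcurve $Y\subsetneq X$ with $Y\cap Y^c\cap \Xsep=\emptyset$ is a spine of $X$ if and only if $Y=\emptyset$ or $Y=X$. Indeed, $Y$ being a spine means $Y\cap Y^c\subseteq \Xsep$; combined with $Y\cap Y^c\cap\Xsep=\emptyset$ this forces $Y\cap Y^c=\emptyset$, and since $X$ is connected the only such $Y$ are $\emptyset$ and $X$, which are not proper. Therefore, among the subcurves of $X$ arising as $\ov Z$ for $Z\subsetneq X^2$ proper, \emph{none} is a spine of $X$.

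With this dictionary in hand the equivalence is almost immediate. Using $\un q^2_Z=\un q_{\ov Z}$, $g_Z=g_{\ov Z}$, $\delta_Z=\delta_{\ov Z}$ — and the same identities applied to connected components, since the connected components of $Z$ specialize to the connected components of $\ov Z$ and likewise for $Z^c$ — the condition "$\un q^2$ is integral at $Z$" (i.e. $\un q^2_{Z_i}-\delta_{Z_i}/2\in\Z$ for each connected component $Z_i$ of $Z$ and of $Z^c$) is equivalent to "$\un q$ is integral at $\ov Z$". So: $\un q^2$ is \emph{not} general $\iff$ there exists a proper $Z\subsetneq X^2$ at which $\un q^2$ is integral $\iff$ there exists a proper $Y\subsetneq X$ with $Y\cap Y^c\cap\Xsep=\emptyset$ at which $\un q$ is integral $\iff$ (by the preceding paragraph, since such a $Y$ is automatically not a spine) there exists a proper non-spine $Y\subsetneq X$ at which $\un q$ is integral $\iff$ $\un q$ is \emph{not} non-degenerate. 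The one implication requiring a small extra argument is turning "$\un q$ is not non-degenerate" into "there is an integral $Y$ with $Y\cap Y^c\cap\Xsep=\emptyset$": given a non-spine $Y$ at which $\un q$ is integral, I would pass to a suitable connected component to arrange $Y\cap Y^c\cap\Xsep=\emptyset$ while keeping $Y$ non-spine and keeping integrality — here the key is that if $N\in Y\cap Y^c\cap\Xsep$ then $N$ separates $X$ into two tails, and replacing $Y$ by its intersection with one of those tails (or the union of components of $Y$ in one tail) does not destroy integrality because integrality is a condition on connected components and the tail decomposition respects this; iterating over all such nodes removes them all. I expect \emph{this} reduction — carefully producing, from an arbitrary integral non-spine subcurve, an integral subcurve of the special shape \eqref{subcur} that is still not a spine — to be the only genuinely fiddly step; the rest is bookkeeping with the already-established identities $g_Z=g_{\ov Z}$, $\delta_Z=\delta_{\ov Z}$, $\un q^2_Z=\un q_{\ov Z}$. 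Finally, since the weighted dual graph of $X^2$ and the polarization $\un q^2$ are uniquely determined independently of the chosen smoothing (as remarked before the statement), the phrase "for every (or equivalently, for some) smoothing" is justified with no further work.
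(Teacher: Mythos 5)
Your dictionary between proper subcurves $Z\subsetneq X^2$ and proper subcurves $Y\subsetneq X$ satisfying \eqref{subcur}, the observation that such a $Y$ is automatically not a spine, and the component-by-component translation of ``$\un q^2$ integral at $Z$'' into ``$\un q$ integral at $\ov Z$'' are exactly the right frame; in particular your proof of the implication (non-degenerate $\Rightarrow$ general) is correct and is, if anything, a cleaner phrasing of what the paper does. The problem is the step you yourself single out as the fiddly one: producing, from a proper non-spine subcurve $Y$ at which $\un q$ is integral, a proper integral subcurve whose boundary is disjoint from $\Xsep$. Your proposed move --- replacing $Y$ by its intersection with one of the two tails at a node $N\in Y\cap Y^c\cap\Xsep$, or by the union of the components of $Y$ lying in one tail --- does not achieve this. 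One may first reduce to $Y$ connected (take the connected component of $Y$ containing a non-separating node of $Y\cap Y^c$; it is still proper, not a spine, and $\un q$ is still integral at it, since every connected component of its complement is a union of integral pieces glued at nodes and $\un q_V-\delta_V/2$ is additive modulo $\Z$ under such gluings). But for connected $Y$ with $N\in Y\cap Y^c\cap \Xsep$, the branch of $N$ lying in $Y^c$ prevents $Y$ from meeting the opposite tail, so $Y$ is entirely contained in one tail $T_1$: intersecting with $T_1$ returns $Y$ unchanged, with $N$ still on its boundary, while intersecting with $T_2$ gives the empty curve. In the disconnected case the move can similarly fail to remove $N$, or can discard exactly the part of $Y$ carrying the non-separating boundary node, destroying non-spine-ness. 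So no shrinking procedure of this kind can terminate with the required curve.

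The correct move is to enlarge rather than shrink, and this is what the paper does: choose $Y$ maximal among connected, non-spine subcurves at which $\un q$ is integral (the set is non-empty by the reduction above, cf. \eqref{prop-sub}). If some $N\in Y\cap Y^c\cap\Xsep$ remained, then since $Y$ is connected it lies in one tail at $N$, and the opposite tail $T$ is a connected component of $Y^c$; the curve $Y':=Y\cup T$ is again connected, satisfies $Y'\cap Y'^c=(Y\cap Y^c)\setminus\{N\}\not\subset\Xsep$, and $\un q$ is integral at $Y'$ (its complementary components are components of $Y^c$, and $\un q_{Y'}-\delta_{Y'}/2=(\un q_Y-\delta_Y/2)+(\un q_T-\delta_T/2)+1\in\Z$ because $\delta_{Y\cup T}=\delta_Y+\delta_T-2$), contradicting maximality. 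Hence the maximal $Y$ satisfies \eqref{subcur}, and the rest of your chain of equivalences then goes through verbatim and coincides with the paper's argument.
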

\begin{proof}
Assume that $\un q$ is non-degenerate on $X$. Let $Z$ be a proper subcurve of any fixed smoothing $X^2$
and $W$ a connected component of $Z$ or $Z^c$. We want to show that $\displaystyle \un q^2_W-\frac{\delta_W}{2}\not\in \Z$.
Consider the subcurve $\ov{Z}\subset X$. Clearly $\ov{Z}$ is a proper subcurve and is not a spine because
of (\ref{subcur}). Moreover $\ov{W}$ is a connected component of $\ov{Z}$ or $\ov{Z}^c$.
Therefore, because of the assumption and the definition of $\un q^2$, we get
$\displaystyle \un q^2_W-\frac{\delta_W}{2}=\un q_{\ov W} -\frac{\delta_{\ov W}}{2}\not\in \Z$.

Conversely, assume that $\un q^2$ is general for some fixed smoothing $X^2$ and,
by contradiction, assume also that $\un q$ is not non-degenerate on $X$. Then there
exists some subcurve $Y$ of $X$ such that
\begin{equation}\label{prop-sub}
\begin{sis}
& Y \text{ is connected, } \\
& Y\cap Y^c\not \subset \Xsep \text{ (i.e. $Y$ is not a spine}), \\
& \un q \text{ is integral at } Y.
\end{sis}
\end{equation}
If we chose $Y$ maximal among the subcurves satisfying the properties (\ref{prop-sub}), then
we claim that $Y\cap Y^c\cap \Xsep=\emptyset.$
Indeed, if this is not the case, then there exists a separating node $n\in Y\cap Y^c$.
Since $Y$ is connected, one of the two tails attached to $n$, call it $T$, is a connected
component of $Y^c$. Consider the subcurve $Y':=Y\cup T$. It is easily checked that
$Y'$ is connected, $Y'\cap Y'^c=(Y\cap Y^c)\setminus \{n\}\not\subset \Xsep$
and that $\un q$ is integral at $Y'$. Therefore $Y'$ satisfies the properties
(\ref{prop-sub}) and, since $Y\subsetneq Y'$, this contradicts the maximality of $Y$.

Since the chosen maximal subcurve $Y$ satisfies property (\ref{subcur}), we known
that there exists a subcurve $Z\subsetneq X^2$ such that $\ov{Z}=Y$. But then
the same  argument as before gives that $\un q^2$ is integral
at $Z$, which contradicts the initial assumption on $\un q^2$.

\end{proof}

\begin{remark}\label{can-nondeg}
The canonical polarization of degree $d$ on $X$ of Rmk. \ref{inequ}(\ref{inequ2}) is non-degenerate if and only $X$ is
weakly $d$-general in the sense of \cite[Def. 1.13]{capNtype}, as it follows easily by comparing the definition of loc. cit. with the above
Proposition \ref{nondeg-gen}. Using this, the equivalence $(i)\Leftrightarrow (iv)$ of our Theorem \ref{nondeg-thm} recovers \cite[Thm. 2.9]{capNtype}
in the case of the canonical polarization of degree $d$.
\end{remark}

\end{nota}

\end{document}